\newtheorem{thm}{Theorem}[section]
\newtheorem{lem}[thm]{Lemma}
\newtheorem{prop}[thm]{Proposition}
\theoremstyle{definition}
\newtheorem{rem}{Remark}[section]
\numberwithin{equation}{section}
\begin{document}
\title[Quasilinear two-species chemotaxis system with two chemicals]
{Critical blow-up lines in a two-species quasilinear chemotaxis system with two chemicals }

\author[Zeng]{Ziyue Zeng}%
\address{Z. Zeng: School of Mathematics, Southeast University, Nanjing 211189, P. R. China}
\email{ziyzzy@163.com}

\author[Li]{Yuxiang Li$^{\star}$}
\address{Y. Li: School of Mathematics, Southeast University, Nanjing 211189, P. R. China}
\email{lieyx@seu.edu.cn}

\subjclass[2020]{35B44, 35B33, 35K57, 35K59, 35Q92, 92C17.}%

\keywords{Quasilinear two-species chemotaxis system with two chemicals; critical blow-up lines; finite time blow up; global boundedness}
\thanks{$^{\star}$Corresponding author}
\thanks{Supported in part by National Natural Science Foundation of China (No. 12271092, No. 11671079) and the Jiangsu Provincial Scientific Research Center of Applied
Mathematics (No. BK20233002).}
\begin{abstract}
In this study, we explore the quasilinear two-species chemotaxis system with two chemicals
\begin{align}\tag{$\star$}
\begin{cases}
u_t = \nabla \cdot(D(u)\nabla u) -  \nabla \cdot \left(S(u) \nabla v\right), & x \in \Omega, \ t > 0, \\ 
0 = \Delta v - \mu_w + w, \quad \mu_w=\fint_{\Omega}w, & x \in \Omega, \ t > 0, \\ 
w_t = \Delta w -  \nabla \cdot \left(w \nabla z\right), & x \in \Omega, \ t > 0, \\ 
0 = \Delta z - \mu_u + u, \quad \mu_u=\fint_{\Omega}u, & x \in \Omega, \ t > 0, \\
\frac{\partial u}{\partial \nu} = \frac{\partial v}{\partial \nu} = \frac{\partial w}{\partial \nu} = \frac{\partial z}{\partial \nu} = 0, & x \in \partial \Omega, \ t > 0, \\
u(x, 0) = u_0(x), \quad w(x, 0) = w_0(x), & x \in \Omega,
\end{cases}
\end{align}
where $\Omega \subset \mathbb{R}^n$ ($n \geq3$) is a smooth bounded domain. The functions $D(s)$ and $S(s)$ exhibit asymptotic behavior of the form 
\begin{align*}
D(s) \simeq k_D s^p \ \text {and} \ S(s) \simeq k_S s^q, \quad s \gg 1
\end{align*}
with $p,q \in \mathbb{R}$. We prove that 
\begin{itemize}
 \item when $\Omega$ is a ball, if $q-p>2-\frac{n}{2}$ and $q>1-\frac{n}{2}$, there exist radially symmetric initial data $u_0$ and $w_0$, such that the corresponding solutions blow up in finite time;
 \item for any general smooth bounded domain $\Omega \subset \mathbb{R}^n$, if $q-p<2-\frac{n}{2}$, all solutions are globally bounded;
 \item for any general smooth bounded domain $\Omega \subset \mathbb{R}^n$, if $q<1-\frac{n}{2}$, all solutions are global.
\end{itemize}
We point out that our results implies that the system ($\star$) possess two critical lines $
q-p=2-\frac{n}{2}$ and $q=1-\frac{n}{2}$ to classify three dynamics among global boundedness, finite-time blow-up, and global existence of solutions to system ($\star$).

\end{abstract}
\maketitle

\section{Introduction}

In this study, we explore the two-species chemotaxis system with two chemicals proposed by Tao and Winkler \cite{2015-DCDSSB-TaoWinkler}. Since experimental evidence shows that the diffusion of the chemical substance is faster than the random motion of cells \cite{1992-TAMS-JaegerLuckhaus}, we consider the following quasilinear system with Jäger-Luckhaus form
\begin{align}\label{eq1.1.0}
\begin{cases}
u_t = \nabla \cdot(D(u) \nabla u) -  \nabla \cdot \left(S(u) \nabla v\right), & x \in \Omega, \ t > 0, \\ 
0 = \Delta v - \mu_w + w, \quad \mu_w=\fint_{\Omega}w, & x \in \Omega, \ t > 0, \\ 
w_t = \Delta w -  \nabla \cdot \left(w \nabla z\right), & x \in \Omega, \ t > 0, \\ 
0 = \Delta z - \mu_u + u, \quad \mu_u=\fint_{\Omega}u, & x \in \Omega, \ t > 0, \\
\frac{\partial u}{\partial \nu} = \frac{\partial v}{\partial \nu} = \frac{\partial w}{\partial \nu} = \frac{\partial z}{\partial \nu} = 0, & x \in \partial \Omega, \ t > 0, \\
u(x, 0) = u_0(x), \quad w(x, 0) = w_0(x), & x \in \Omega,
\end{cases}
\end{align}
where $\Omega\subset \mathbb{R}^n (n \geq 1$) is a smooth bounded domain and $D(s)$, $S(s)$ exhibit asymptotic behavior of the form 
\begin{align*}
D(s) \simeq k_D s^p \ \text {and} \ S(s) \simeq k_S s^q, \quad s \gg 1
\end{align*}
with $p,q \in \mathbb{R}$. Unlike the classical chemotaxis system, system (\ref{eq1.1.0}) exhibits a circular interaction structure: the density of the first species is denoted by $u(x,t)$, and its movement is influenced by the concentration of a chemical signal, denoted as $v(x,t)$, which is secreted by the second species. The density of the second species is represented by $w(x,t)$, and its individuals orient their movement along concentration gradients of a second signal with density
$z(x,t)$ which in turn is produced by the first species. 

Some scholars have investigated the two-species chemotaxis system with two chemicals
\begin{align}\label{eq1.1.0-1}
\begin{cases}
u_t = \nabla \cdot(D_1(u) \nabla u) -  \nabla \cdot \left(S_1(u) \nabla v\right), & x \in \Omega, \ t > 0, \\ 
0 = \Delta v - v + w,  & x \in \Omega, \ t > 0, \\ 
w_t = \nabla \cdot(D_2(w) \nabla w) -  \nabla \cdot \left(S_2(w) \nabla z\right), & x \in \Omega, \ t > 0, \\ 
0 = \Delta z - z + u,  & x \in \Omega, \ t > 0, \\
\frac{\partial u}{\partial \nu} = \frac{\partial v}{\partial \nu} = \frac{\partial w}{\partial \nu} = \frac{\partial z}{\partial \nu} = 0, & x \in \partial \Omega, \ t > 0, \\
u(x, 0) = u_0(x), \quad w(x, 0) = w_0(x), & x \in \Omega.
\end{cases}
\end{align}
Let $m_1:=\int_{\Omega} u_0dx+\int_{\Omega} w_0dx >0$. For the system (\ref{eq1.1.0-1}) with $ D_1(s)\equiv D_2(s)\equiv 1$ and $S_1(s)=S_2(s)=s$, Tao and Winkler \cite{2015-DCDSSB-TaoWinkler} found that 
\begin{itemize}
\item if either $n=2$ and $m_1$ lies below some threshold, or $n \geq 3$ and $\left\|u_0\right\|_{L^{\infty}(\Omega)}$ and $\left\|w_0\right\|_{L^{\infty}(\Omega)}$ are sufficiently small, solutions are globally bounded; 
\item if either $n=2$ and $m_1$ is suitably large, or $n \geq 3$ and $m_1$ is arbitrary, there exist initial data such that the corresponding solution blows up in finite time. 
\end{itemize}  
    Let $m_u:=\int_{\Omega} u_0 dx$ and $ m_w:=\int_{\Omega} w_0 dx$. For $n=2$, a critical mass curve was proposed:
\begin{itemize} 
   \item if $m_u m_w-2 \pi\left(m_u+m_w\right)>0$, and $\int_{\Omega} u_0\left|x-x_0\right|^2 dx$, $ \int_{\Omega} w_0\left|x-x_0\right|^2 dx$ are small enough, there exist solutions that blow up in finite time \cite{2018-N-YuWangZheng}. 
   \item if $m_u m_w-2 \pi\left(m_u+m_w\right)<0$, the solutions remain globally bounded \cite{2024-NARWA-YuXueHuZhao}.
\end{itemize}

For the system (\ref{eq1.1.0-1}) with $D_i(s) =  (s+1)^{p_i-1}$ and $S_i(s) =  s(1+s)^{q_i-1}$, Zheng \cite{2017-TMNA-Zheng} found that the system (\ref{eq1.1.0-1}) possesses a globally bounded classical solution when $q_1<p_1-1+\frac{2}{n} $ and $ q_2<p_2-1+\frac{2}{n}$. For the special case $q_i\equiv 1$, Zhong \cite{2021-JMAA-Zhong} proved that if $p_1p_2+\frac{2p_1}{n}>p_1+p_2$ or $p_1p_2+\frac{2p_2}{n}>p_1+p_2$, the solutions of (\ref{eq1.1.0-1}) exist globally and remain bounded. Recently, Zeng and Li \cite{2025--ZengLi} considered the case $p_i \equiv 1$, and obtained a critical blow-up curve (i.e. $q_1+q_2-\frac{4}{n}=\max\Big\{\big(q_1-\frac{2}{n}\big)q_2,\big(q_2-\frac{2}{n}\big)q_1\Big\}$ in the square $(0,\frac{4}{n}) \times (0,\frac{4}{n})$) to
classify two dynamics among global boundedness and finite-time blow-up of solutions.

The studies of (\ref{eq1.1.0-1}) can be traced back to the classical Keller-Segel system \cite{1971-JTB-KellerSegel,1971-Jotb-KellerSegel, 2002-CAMQ-PainterHillen}, which involves one species and one chemical
\begin{align}\label{eq1.1.1}
\begin{cases}
u_t=\nabla \cdot(D(u) \nabla u)-\nabla \cdot(S(u) \nabla v), & x \in \Omega, t>0, \\ 
0=\Delta v-v+u,  & x \in \Omega, t>0 , \\
 \frac{\partial u}{\partial \nu}=\frac{\partial v}{\partial \nu}=0, & x \in \partial \Omega, t>0, \\
u(x, 0) = u_0(x), & x \in \Omega.
\end{cases}
\end{align}
For the case $D(s) \equiv 1$, $S(s)=s$, the system (\ref{eq1.1.1}) exists critical mass phenomenon when $n=2$. Let $m:=\int_{\Omega} u_0 dx$. In the radially symmetric setting, Nagai \cite{1995-AMSA-Nagai} proved that, 
\begin{itemize}
\item when $m<8 \pi$ the solutions of system (\ref{eq1.1.1}) are globally bounded;
\item when $m>8 \pi$, there exist radially symmetric initial data with small $\int_{\Omega} u_0(x)|x|^2 d x$ such that the corresponding solution blows up in finite time. 
\end{itemize}
Later, Nagai \cite{2001-JIA-Nagai} removed the requirement for radial symmetry and established the following blow-up criterion: if $\int_{\Omega} u_0|x-q|^2 d x$ is sufficiently small, then the solution blows up in finite time under the conditions that either $q \in \Omega$ and $m>8 \pi$ or $q \in \partial \Omega$ and $m>4 \pi$. The same results are also valid for the system (\ref{eq1.1.1}) with Jäger-Luckhaus form (i.e. the second equation is replaced by $0 = \Delta v - \fint_{\Omega}u dx + u$). For the parabolic-parabolic version of (\ref{eq1.1.1}), related results can be found in \cite{1997-FE-NagaiSenbaYoshida, 2001-EJAM-HorstmannWang, 2014--MizoguchiWinkler}.

When $D(s)=(s+1)^{p}$ and $S(s)=s(s+1)^{q-1}$, the system (\ref{eq1.1.1}) exhibits two critical lines (i.e. $q-p=\frac{2}{n}$ and $q=0$). For the system (\ref{eq1.1.1}) with Jäger-Luckhaus form, Winkler and Djie \cite{2010-NA-WinklerDjie} proved that 
\begin{itemize}
\item if $q-p<\frac{2}{n}$, all solutions are globally bounded; 
\item if $q-p>\frac{2}{n}$ and $q>0$, there exist radially symmetric solutions that blow up in finite time.
\end{itemize}   
For the system (\ref{eq1.1.1}), Lankeit \cite{2020-DCDSSS-Lankeit} demonstrated that, 
\begin{itemize}
\item if $q-p>\frac{2}{n}$ and $q \leq 0$, there exist solutions that
blow up in infinite time. 
\end{itemize}
Similar results regarding the parabolic–parabolic version of (\ref{eq1.1.1}) can be found in \cite{2010-MMAS-Winkler,2012-JDE-TaoWinkler,2012-JDE-CieslakStinner,2014-AAM-CieslakStinner,2015-JDE-CieslakStinner,2019-JDE-Winkler, {2025-CVPDE-CaoFuest}}

$\mathbf{Main\ results.}$ Let $m_u:=\int_{\Omega} u_0 dx$ and $m_w=:\int_{\Omega} w_0 dx$. We assume that
\begin{align}\label{eq1.2}
\begin{aligned}
&S(s),D(s) \in C^3([0, \infty))  \text { satisfy } D(s)>0  \text { on }  [0, \infty) , \ S(s) > 0 
\ \text {on} \ (0, \infty) \text { and } S(0)=0,
\end{aligned}
\end{align}
and 
\begin{align}\label{eq1.1}
\begin{aligned}
u_0, w_0 \in  W^{1,\infty}(\overline{\Omega})\ \text { are positive }. 
\end{aligned}
\end{align}

\begin{thm}\label{thm1_1}
Let $\Omega=B_R(0) \subset \mathbb{R}^n$ $(n \geq 3)$ with some $R>0$. Suppose that $u_0$ and $w_0$ are radially symmetric, satisfying $(\ref{eq1.1})$. Assume that $D(s)$ and $S(s)$ satisfy $(\ref{eq1.2})$ as well as
\begin{align}\label{eq1.3}
D(s) \leq k_D s^p, \quad  s \geq 1
\end{align}
and
\begin{align}\label{eq1.4}
S(s) \geq k_S s^q, \quad  s \geq 1
\end{align}
with some $k_D,k_S > 0$ and $p,q\in \mathbb{R}$ fulfilling
\begin{align}\label{result-all}
q-p>2-\frac{n}{2} \quad \text{and} \quad q>1-\frac{n}{2}.
\end{align}
Then, one can find $M_1(r), M_2(r) \in C^0([0, R])$ such that, whenever radially symmetric initial data $u_0$, $w_0$ satisfying 
\begin{align}\label{eq1.8}
\int_{B_r(0)} u_0 dx\geq M_1(r),\quad \int_{B_r(0)} w_0dx \geq M_2(r),\quad  r\in(0,R),
\end{align}
the problem \eqref{eq1.1.0} admits a classical solution in $\Omega \times (0,T_{\max })$ which blows up in finite time.
\end{thm}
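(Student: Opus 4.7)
The plan is to follow the moment-method approach of Jäger--Luckhaus and Winkler--Djie, in the two-species form developed in \cite{2025--ZengLi}: reduce the radial problem to a coupled pair of mass-accumulation PDEs, then close a superlinear system of differential inequalities for weighted moments of $U$ and $W$ forcing $T_{\max} < \infty$.

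First, set $s = r^n$, $s_0 = R^n$, and introduce
\begin{align*}
U(s,t) := \int_0^{s^{1/n}} \rho^{n-1} u(\rho,t)\,d\rho, \qquad W(s,t) := \int_0^{s^{1/n}} \rho^{n-1} w(\rho,t)\,d\rho,
\end{align*}
so that $u = nU_s$ and $w = nW_s$. One integration of the two elliptic equations gives $r^{n-1}v_r = W - \mu_w s/n$ and $r^{n-1}z_r = U - \mu_u s/n$; substituting these into the $u$- and $w$-equations and integrating the divergence structure once yields the coupled problem
\begin{align*}
U_t &= n^2 s^{2-2/n} D(nU_s)\,U_{ss} - S(nU_s)\bigl(W - \tfrac{\mu_w}{n}s\bigr),\\
W_t &= n^2 s^{2-2/n}\,W_{ss} - nW_s\bigl(U - \tfrac{\mu_u}{n}s\bigr)
\end{align*}
on $(0,s_0)\times(0,T_{\max})$, with $U(0,\cdot) = W(0,\cdot) = 0$ and boundary values at $s_0$ determined by $m_u, m_w$. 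The pointwise assumption (\ref{eq1.8}) translates into initial lower bounds $U(s,0) \geq \Phi_1(s)$, $W(s,0) \geq \Phi_2(s)$ with explicit $\Phi_i$ determined by $M_i$.

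For exponents $a_1, a_2 \in (0,1)$ and some $s_* \in (0, s_0)$ to be chosen, define the weighted moments
\begin{align*}
\phi_1(t) := \int_0^{s_*} s^{-a_1}(s_* - s)\,U(s,t)\,ds, \qquad \phi_2(t) := \int_0^{s_*} s^{-a_2}(s_* - s)\,W(s,t)\,ds.
\end{align*}
Differentiating $\phi_i$ in $t$, integrating by parts, using the one-sided bounds $D(nU_s) \leq k_D(nU_s)^p$ and $S(nU_s) \geq k_S(nU_s)^q$ on $\{nU_s \geq 1\}$, and invoking Jensen-type pointwise lower bounds on $U, W$ following from their radial monotonicity, the goal is to close a coupled Bernoulli-type system
\begin{align*}
\phi_1'(t) \geq c_1 \phi_1(t)^{\alpha_1}\phi_2(t)^{\beta_1} - C_1, \qquad \phi_2'(t) \geq c_2 \phi_1(t)^{\alpha_2}\phi_2(t)^{\beta_2} - C_2,
\end{align*}
with exponents whose mixing forces blow-up of the resulting ODE system. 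Choosing $M_1, M_2$ so that $\phi_i(0)$ exceed the blow-up threshold and applying a Riccati-type comparison then gives $T_{\max} < \infty$.

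The main obstacle is the asymmetric diffusion-vs-reaction bookkeeping. On the $u$-side, after integration by parts the diffusion contribution contains $(nU_s)^{p+1}$ with weight $s^{2-2/n - a_1}$, and must be absorbed into the cross-term $(nU_s)^q W$ with weight $s^{-a_1}(s_* - s)$; Hölder balance leaves exactly $q - p > 2 - n/2$ as the admissible regime, the $n/2$ emerging from the degenerating weight $s^{2-2/n}$ combined with the double nonlocality ($v$ generated by $w$, $z$ generated by $u$). On the $w$-side, the linear diffusion $s^{2-2/n}W_{ss}$ must be absorbed into $nW_s \cdot U$, which requires transferring the strength of the $u$-equation (controlled by $q$) through a Jensen lower bound for $U$; the resulting balance produces the complementary threshold $q > 1 - n/2$. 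Finding a single admissible choice $(a_1, a_2, s_*)$ satisfying both constraints is exactly what (\ref{result-all}) makes possible, while the pointwise persistence of the initial concentration (\ref{eq1.8}) along $(0, T_{\max})$ is ensured by a comparison argument on the reduced $U$--$W$ system.
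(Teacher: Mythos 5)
Your route --- weighted-moment functionals for $U$ and $W$ closing a coupled superlinear ODE system in the Jäger--Luckhaus / Winkler--Djie style --- is genuinely different from the paper's. The paper instead constructs explicit piecewise comparison functions $\underline{U},\underline{W}$ (linear in $s$ below a moving interface $s=1/y(t)$, of the form $c(s - (1-\alpha)/y(t))^{\alpha}$ beyond it, with $y$ solving $y' = \kappa y^{1+\delta}$ and blowing up at a finite $T$), proves a weak comparison principle for the transformed operators $\mathcal{P},\mathcal{Q}$, and reads off $u(0,t) = nU_s(0,t) \geq n\underline{U}_s(0,t) = n\mathrm{e}^{-\theta t}l\,y^{1-\alpha}(t)\to\infty$. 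That route replaces nonlocal moment bookkeeping by pointwise verification of $\mathcal{P}[\underline{U},\underline{W}]\le 0$, $\mathcal{Q}[\underline{U},\underline{W}]\le 0$ in three explicit $s$-regions, which is checkable term by term.

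Two concrete problems with the proposal. First, a sign error that reverses aggregation: integrating $0=\Delta v - \mu_w + w$ once radially gives $r^{n-1}v_r = \frac{\mu_w}{n}s - W$, not $W - \frac{\mu_w}{n}s$, and likewise for $z$; consequently the transformed equations read $U_t = n^2 s^{2-2/n}D(nU_s)U_{ss} + S(nU_s)\bigl(W - \frac{\mu_w}{n}s\bigr)$ and $W_t = n^2 s^{2-2/n}W_{ss} + nW_s\bigl(U - \frac{\mu_u}{n}s\bigr)$, with plus signs. With your minus signs no moment inequality of the claimed form could close, because the cross-terms would fight blow-up rather than drive it.

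Second, and more fundamentally, the proposed coupled Bernoulli inequality $\phi_1' \geq c_1\phi_1^{\alpha_1}\phi_2^{\beta_1} - C_1$, $\phi_2' \geq c_2\phi_1^{\alpha_2}\phi_2^{\beta_2} - C_2$ is asserted, not derived, and the assertion hides all of the actual difficulty. Integrating the $u$-diffusion by parts produces $\int \partial_s\bigl[s^{2-2/n-a_1}(s_*-s)\bigr]\,H(nU_s)\,ds$ for a primitive $H$ of $D$; with only the one-sided bound $D(\xi)\leq k_D\xi^p$ this has indeterminate sign on the region where the weight derivative is positive, and it is not explained how to absorb it. More importantly, the cross-terms $S(nU_s)W$ and $nW_s\,U$ each couple the slope of one unknown with the primitive of the other at the same $s$; the mixed Hölder/Jensen step converting these into powers of $\phi_1,\phi_2$ is precisely where the two-species structure departs from the single-species argument of Winkler--Djie, and it is left entirely unspecified --- neither the admissible $(a_1,a_2,s_*)$ nor the resulting exponents $\alpha_i,\beta_i$ are computed, nor is the blow-up criterion for the resulting ODE system stated. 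This is the crux of the theorem and cannot be left to ``the goal is to close''. The paper's subsolution construction is a complete substitute for this step; your outline is a plausible alternative strategy, but as written it does not constitute a proof.
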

\begin{rem}
Our proof follows the approach to constructing subsolutions as presented in Tao and Winkler \cite{2025-JDE-TaoWinkler}.
\end{rem}
The next two theorems indicate that the ranges in (\ref{result-all}) are optimal for finite-time blow-up.
\begin{thm}\label{thm1_2}
Let $\Omega \subset \mathbb{R}^n$ $(n \geq 3)$ be a smooth bounded domain. Assume that $D(s)$ and $S(s)$ satisfy $(\ref{eq1.2})$ as well as
\begin{align}\label{D-g}
D(s) \geq K_D (1+s)^p, \quad  s \geq 0
\end{align}
and
\begin{align}\label{S-g}
S(s) \leq K_S (1+s)^{q}, \quad  s \geq 0
\end{align}
with some $K_D,K_S > 0$ and $p,q\in \mathbb{R}$ fulfilling
\begin{align*}
q-p<2-\frac{n}{2} .
\end{align*}
Then, for all initial data fulfilling $(\ref{eq1.1})$, the solution of (\ref{eq1.1.0}) exists globally and remains bounded in the sense that there exists $C>0$ independent of $t$ such that
\begin{align*}
\|u(\cdot, t)\|_{L^{\infty}(\Omega)}+\|w(\cdot, t)\|_{L^{\infty}(\Omega)} \leq C,
\quad t>0.
\end{align*}
\end{thm}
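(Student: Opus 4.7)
The plan is a two-stage bootstrap: first obtain uniform-in-time $L^k$ bounds on $u$ and $w$ for every finite $k$, and then promote these to an $L^\infty$ bound via Moser iteration or heat-semigroup estimates, concluding global existence through the standard extension criterion associated with \eqref{eq1.1.0}. The available ``free'' information is the conservation of total mass, $\int_\Omega u(\cdot,t)=m_u$ and $\int_\Omega w(\cdot,t)=m_w$, and the fact that $v$ and $z$ are determined by $w$ and $u$ respectively through Neumann Poisson equations, so that elliptic regularity converts any $L^s$ bound on $w$ (resp. $u$) into a $W^{2,s}$ bound on $v$ (resp. $z$). The subcriticality condition $q-p<2-\tfrac{n}{2}$ will enter precisely at the level of the Gagliardo--Nirenberg balance used to close the $L^k$ step.

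For the $L^k$ step I would test the first equation against $(1+u)^{k-1}$ with $k\ge 1$ large, and similarly the third equation against $(1+w)^{k-1}$. Using the lower bound \eqref{D-g} on $D$ produces the dissipation $\int_\Omega |\nabla (1+u)^{(p+k)/2}|^2$, while the cross term $\int S(u)(1+u)^{k-2}\nabla u\cdot\nabla v$ is rewritten by integration by parts as $-\int F(u)\Delta v=\int F(u)(w-\mu_w)$, where the upper bound \eqref{S-g} gives $F(u)\lesssim (1+u)^{q+k-1}$. An analogous computation on the $w$-equation, using $\Delta z=\mu_u-u$, yields the coupled system
\begin{align*}
\frac{d}{dt}\int_\Omega (1+u)^k + c_1\int_\Omega \bigl|\nabla (1+u)^{(p+k)/2}\bigr|^2 &\le C_1\int_\Omega (1+u)^{q+k-1}w,\\
\frac{d}{dt}\int_\Omega (1+w)^k + c_2\int_\Omega \bigl|\nabla (1+w)^{k/2}\bigr|^2 &\le C_2\int_\Omega (1+w)^{k}u.
\end{align*}
I would then split each right-hand side by Hölder and Young's inequality into a term of the form $\|(1+u)\|_{L^{p+k}}^{p+k}$ (resp. $\|(1+w)\|_{L^{k}}^{k}$), to be absorbed into the dissipation via Gagliardo--Nirenberg, plus an $L^\sigma$-norm of the other species at a lower exponent $\sigma=\sigma(k)$ known inductively. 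Starting from the conserved $L^1$ masses and iterating in $k$, this produces the sought uniform bound $\|u(\cdot,t)\|_{L^k}+\|w(\cdot,t)\|_{L^k}\le C_k$ for each $k<\infty$.

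Once arbitrarily high $L^k$ control is available, elliptic regularity for the second and fourth equations gives $\|\nabla v(\cdot,t)\|_{L^\infty}+\|\nabla z(\cdot,t)\|_{L^\infty}\le C$, after which the standard Moser iteration (or the variation-of-constants $L^p$--$L^\infty$ heat-semigroup estimates) applied to the $u$- and $w$-equations yields $\|u(\cdot,t)\|_{L^\infty}+\|w(\cdot,t)\|_{L^\infty}\le C$, which by the extension criterion forces $T_{\max}=\infty$ and completes the theorem. The main obstacle lies in the $L^k$ step: one must verify that the Gagliardo--Nirenberg exponent balance required to absorb the feedback terms $\int (1+u)^{q+k-1}w$ and $\int (1+w)^k u$ into the respective dissipation terms $\int|\nabla(1+u)^{(p+k)/2}|^2$ and $\int|\nabla(1+w)^{k/2}|^2$ is satisfied precisely when $q-p<2-\tfrac{n}{2}$, and one must track the constants carefully enough that the induction on $k$ delivers bounds uniform in time. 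The endpoint case $q-p=2-\tfrac{n}{2}$, as well as the analogous bookkeeping at the secondary threshold $q=1-\tfrac{n}{2}$ of Theorem~\ref{thm1_1}, is exactly where the subcriticality is used sharply.
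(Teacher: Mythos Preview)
Your energy-estimate framework is the right skeleton, and the Moser-iteration endgame is exactly what the paper does. The gap is in the $L^k$ step: a bootstrap that starts from the conserved $L^1$ masses alone does \emph{not} close under the full hypothesis $q-p<2-\tfrac{n}{2}$. Concretely, after testing the $w$-equation with $w^{k_1-1}$, Gagliardo--Nirenberg against the $L^1$ baseline lets the dissipation $\int|\nabla w^{k_1/2}|^2$ absorb at most $\int w^{k_1+2/n}$; Young's inequality on the cross term $\int w^{k_1}(1+u)$ then forces the residual $u$-factor into $L^{nk_1/2+1}$, so one needs $nk_1/2+1\le p+k_2+2/n$. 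On the $u$-side, absorbing $\int(1+u)^{q+k_2-1}w$ into $\int w^{k_1+2/n}$ leaves a $u$-residual that fits under the $u$-dissipation (again with $L^1$ baseline) only if $p+k_2+2/n\le\bigl(p-q+1+\tfrac{2}{n}\bigr)(k_1+\tfrac{2}{n})$. These two constraints are compatible for some $k_1>1$ precisely when $p-q+1+\tfrac{2}{n}\ge\tfrac{n}{2}$, i.e.\ when $q-p\le 1+\tfrac{2}{n}-\tfrac{n}{2}$, which is strictly stronger than $q-p<2-\tfrac{n}{2}$ for every $n\ge 3$. Since the very first step already fails in the range $1+\tfrac{2}{n}-\tfrac{n}{2}<q-p<2-\tfrac{n}{2}$, there is no improved baseline to feed into a second step, and the induction never starts.

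The paper closes this gap with an additional ingredient you are missing: a Lyapunov functional
\[
\mathcal{F}(t)=\int_\Omega G(u)\,dx+\int_\Omega w\ln w\,dx-\int_\Omega\nabla v\cdot\nabla z\,dx,\qquad G''=\frac{D}{S},
\]
which is shown to be nonincreasing along trajectories. From $\mathcal{F}(t)\le\mathcal{F}(0)$, the identity $\int\nabla v\cdot\nabla z=\int wz$, and elliptic regularity $\|z\|_{L^\infty}\lesssim\|u\|_{L^b}$ for $b>\tfrac{n}{2}$, one extracts a uniform-in-time bound on $\int_\Omega(1+u)^{p-q+2}\,dx$; the hypothesis $q-p<2-\tfrac{n}{2}$ enters exactly here, to ensure $p-q+2>\tfrac{n}{2}$ so that such a $b$ is available. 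With this supercritical baseline on $u$ in hand, the coupled $L^{k_1}/L^{k_2}$ estimates you outline do close (Gagliardo--Nirenberg for $u$ is now run against $L^{p-q+2}$ rather than $L^1$), yielding $L^k$ bounds for all $k$, after which your Moser step finishes the proof.
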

By suitably modifying the argument underlying Theorem~\ref{thm1_2}, we show that the second inequality in (\ref{result-all}) cannot be weakened. 
\begin{thm}\label{thm1_3}
Let $\Omega \subset \mathbb{R}^n$ $(n \geq 3)$ be a smooth bounded domain.  Assume that $D(s)$ and $S(s)$ satisfy $(\ref{eq1.2})$ as well as $(\ref{D-g})$ and $(\ref{S-g})$
with some $K_D,K_S > 0$ and $p,q\in \mathbb{R}$ fulfilling
\begin{align*}
q<1-\frac{n}{2} .
\end{align*}
Then, for all initial data fulfilling $(\ref{eq1.1})$, the corresponding classical solution of system $(\ref{eq1.1.0})$ exists globally.
\end{thm}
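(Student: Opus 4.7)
The plan is to establish global existence by showing that the classical solution on its maximal interval $[0, T_{\max})$ cannot leave every bounded set in finite time. By the standard local well-posedness for (\ref{eq1.1.0}) and the attendant extensibility criterion
\begin{equation*}
T_{\max} < \infty \quad \Longrightarrow \quad \limsup_{t \nearrow T_{\max}} \bigl(\|u(\cdot, t)\|_{L^\infty(\Omega)} + \|w(\cdot, t)\|_{L^\infty(\Omega)}\bigr) = \infty,
\end{equation*}
it suffices to bound $\|u\|_{L^\infty}$ and $\|w\|_{L^\infty}$ on every compact subinterval of $[0, T_{\max})$, not necessarily uniformly in $T$. I will do this by producing, for some $\alpha, \beta > n/2$, a priori bounds on $\|u(\cdot, t)\|_{L^\alpha}$ and $\|w(\cdot, t)\|_{L^\beta}$ that are finite (indeed, at most polynomial in $t$) on such subintervals, and then invoking the Moser iteration developed in the proof of Theorem \ref{thm1_2}, which uses only finiteness (not uniformity in $t$) of these norms.

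The crucial modification of the Theorem \ref{thm1_2} estimates occurs in the $L^\alpha$-bound for $u$. Testing the first equation of (\ref{eq1.1.0}) with $u^{\alpha-1}$ and using the $v$-equation yields
\begin{equation*}
\frac{1}{\alpha}\frac{d}{dt}\int_\Omega u^\alpha + (\alpha-1)\int_\Omega u^{\alpha-2}D(u)|\nabla u|^2 = (\alpha-1)\int_\Omega \Phi_\alpha(u)(w - \mu_w),
\end{equation*}
with $\Phi_\alpha(u) := \int_0^u s^{\alpha-2}S(s)\,ds$. Since $q < 1 - \frac{n}{2}$, the interval $\bigl(\frac{n}{2},\, 1-q\bigr)$ is nonempty; fixing any $\alpha$ therein, the exponent satisfies $\alpha - 2 + q < -1$, and from $S(s) \leq K_S(1+s)^q$ we obtain
\begin{equation*}
\Phi_\alpha(u) \leq K_S \int_0^\infty s^{\alpha-2}(1+s)^q\,ds =: C_0 < \infty
\end{equation*}
uniformly in $u \geq 0$. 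Combining this with mass conservation $\|w(\cdot, t)\|_{L^1} = m_w$ and boundedness of $\mu_w$, and discarding the nonnegative dissipation, gives $\|u(\cdot, t)\|_{L^\alpha(\Omega)}^\alpha \leq \|u_0\|_{L^\alpha(\Omega)}^\alpha + Ct$ on $[0, T_{\max})$, i.e., at most linear-in-$t$ growth. The weaker conclusion of Theorem \ref{thm1_3} compared to Theorem \ref{thm1_2} is reflected here by this $t$-dependent bound, in contrast to the uniform bound obtained from the stronger hypothesis of Theorem \ref{thm1_2}.

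To transfer this control to $w$, I use that $\alpha > n/2$. Elliptic regularity for $-\Delta z = u - \mu_u$ with Neumann data gives $\|z(\cdot, t)\|_{W^{2,\alpha}(\Omega)} \leq C(1+t)^{1/\alpha}$, and Sobolev embedding then supplies $\sigma > n$ with $\|\nabla z(\cdot, t)\|_{L^\sigma(\Omega)} \leq C(1+t)^{1/\alpha}$. An $L^\beta$-test of the $w$-equation for suitable $\beta > n/2$, combined with the identity $-\Delta z = u - \mu_u$, leads to
\begin{equation*}
\frac{1}{\beta}\frac{d}{dt}\int_\Omega w^\beta + c_\beta \int_\Omega |\nabla w^{\beta/2}|^2 \leq \frac{\beta-1}{\beta}\int_\Omega w^\beta u + C\int_\Omega w^\beta.
\end{equation*}
Estimating $\int_\Omega w^\beta u \leq \|w\|_{L^{\beta\alpha/(\alpha-1)}}^\beta \|u\|_{L^\alpha}$ by H\"older, applying Gagliardo--Nirenberg to $w^{\beta/2}$ between $\nabla w^{\beta/2} \in L^2$ and the conservation $\|w\|_{L^1} = m_w$, and absorbing the resulting gradient factor into the dissipation by Young's inequality, one obtains $\frac{d}{dt}\int_\Omega w^\beta \leq C(1+t)^\kappa \bigl(1 + \int_\Omega w^\beta\bigr)$ for some $\kappa \geq 0$; Gronwall then gives at most polynomial growth of $\|w\|_{L^\beta}$ on $[0, T_{\max})$.

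With both $\|u(\cdot, t)\|_{L^\alpha}$ and $\|w(\cdot, t)\|_{L^\beta}$ under such $t$-polynomial control on compact subintervals of $[0, T_{\max})$, the Moser iteration of Theorem \ref{thm1_2} upgrades these to finiteness of $\|u\|_{L^\infty}$ and $\|w\|_{L^\infty}$ on every $[0, T] \subset [0, T_{\max})$, contradicting the extensibility criterion if $T_{\max} < \infty$ and forcing $T_{\max} = \infty$. I anticipate the main technical obstacle to be precisely the $w$-step: because the sensitivity in the $w$-equation is linear, the hypothesis $q < 1 - \frac{n}{2}$ yields no intrinsic decay for $w$ itself, and the bound on $w$ must be routed through the time-growing bound on $\nabla z$; calibrating the Gagliardo--Nirenberg exponents so that the combination with the polynomial-in-$t$ factor from $\|u\|_{L^\alpha}$ produces only polynomial (rather than superlinear Gronwall-type, which would permit finite-time blow-up) growth of $\int_\Omega w^\beta$ is the heart of the argument.
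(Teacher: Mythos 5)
Your proposal is correct, and it takes a genuinely different route from the paper. The paper's proof couples $u$ and $w$ in a single functional $y(t)=\frac{1}{k_2}\int_\Omega(1+u)^{k_2}+\frac{1}{k_1}\int_\Omega w^{k_1}$ for a carefully calibrated pair $(k_1,k_2)$ with $\frac{n}{2}k_1+1<k_2<(k_1+\frac{2}{n})(1-q)$, bounds both cross terms $\int(1+u)^{k_2+q-1}w$ and $\int w^{k_1}(1+u)$ by Young plus Gagliardo--Nirenberg applied to $w$ (absorbing into the $w$-dissipation only and dropping the $u$-dissipation), and closes with $y'\le c\,y$ and Gr\"onwall. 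You instead decouple the system: the crucial observation that for $\alpha\in(\tfrac{n}{2},1-q)$ the antiderivative $\Phi_\alpha(u)=\int_0^u s^{\alpha-2}S(s)\,ds$ is bounded \emph{uniformly in} $u$ --- precisely because $q<1-\tfrac{n}{2}$ makes $s^{\alpha-2}(1+s)^q$ integrable at infinity --- lets you bound the chemotactic term $(\alpha-1)\int\Phi_\alpha(u)(w-\mu_w)$ by a constant via mass conservation of $w$ alone, yielding at-most-linear growth of $\|u\|_{L^\alpha}^\alpha$ with neither the $u$-dissipation nor any $w$-norm appearing. The $w$-bound then follows from the already-controlled $u$ (through $-\Delta z=u-\mu_u$) by Gagliardo--Nirenberg and Young, with the $\theta<1$ condition reducing exactly to $\alpha>n/2$. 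Both routes arrive at $T$-dependent $L^k$ bounds feeding Moser iteration, but yours isolates more transparently why $q<1-\tfrac{n}{2}$ is the relevant threshold (an integrability condition on the antiderivative of the sensitivity) and avoids the interlocking exponent constraints; the paper's version fits more uniformly into the template already set up for Theorem~\ref{thm1_2}, where the coupled functional is genuinely needed. One small remark: your differential inequality for $w$ can be closed without the extra $+C\int_\Omega w^\beta$ term, since the exact testing identity has $-\frac{\beta-1}{\beta}\mu_u\int_\Omega w^\beta\le 0$ on the right, so Gr\"onwall is not even needed there --- direct integration suffices --- but this makes no difference to the outcome.
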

Our results can be summarized in the Figure~\ref{fig:mq_graph}. 


\begin{figure}[!htbp]
\begin{center}
\begin{minipage}{0.4\textwidth}
\begin{center}
\begin{tikzpicture}[scale=1.3]
\draw[->, thick] (-2.5, 0) -- (1, 0) node[right]{$p$};
\draw[->, thick] (0, -2.5) node[below=1em]{ } -- (0, 1) node[above]{$q$};
\draw[very thick] (-2, -2.5) -- (1, 0.5);
\draw[very thick] (-2.5, -1.5) -- (-1, -1.5);
\draw[thick] (0.5, 0) -- (0.5, 0.05);
\draw[thick] (0, -0.5) -- (0.05, -0.5);
\node[] at (-2.1, -0.75) {\textbf{FTBU}};
\node[] at (0.75, -0.26) {$\frac{n}{2}-2$};
\node[] at (-0.5, -0.5) {$2-\frac{n}{2}$};
\node[] at (-0.2, -1.5) {$q=1-\frac{n}{2}$};
\node[] at (-2.2, -2) {\textbf{GE}};
\node[] at (2, 0.7) {$q-p=2-\frac{n}{2}$};
\node[] at (0.5, -1.0) {\textbf{GB}};
\end{tikzpicture}
\end{center}
\end{minipage}
\end{center}
\captionsetup{type=figure}
\captionof{figure}{
“GB”: All solutions are globally bounded. “GE”: All solutions exist globally. “FTBU”: There exist solutions that blow up in finite time. \\}
\label{fig:mq_graph}
\end{figure}
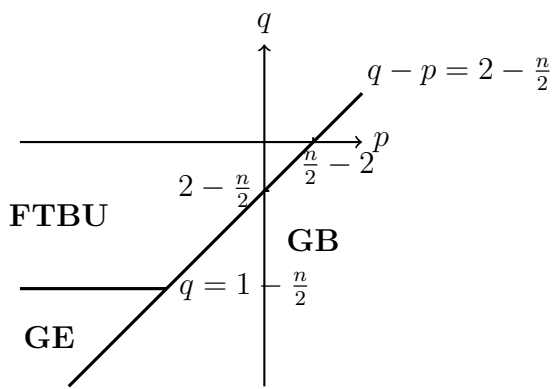



The rest of the paper is organized as follows. We present the local existence and the extensibility criterion of the classical solution in Section~\ref{local existence}. The purpose of Section~\ref{blowup} is to prove that the system (\ref{eq1.1.0}) admits finite-time blow-up solutions. We first establish a weak comparison principle in Subsection~\ref{comparison}, then construct subsolutions, which become singular within finite time, to prove Theorem~\ref{thm1_1} in Subsection~\ref{subsolution}. Section~\ref{bounded} is devoted to showing the global boundedness of solutions. Section~\ref{global existence} is concerned with deriving the global existence of solutions.

\section{local existence and extensibility}\label{local existence}
We first recall the basic result on the local existence, uniqueness and
extensibility criterion of the classical solution of system (\ref{eq1.1.0}). The proof of this result follows the arguments in \cite{2015-DCDSSB-TaoWinkler,2010-NA-WinklerDjie,2025--ZengLi}. 
\begin{prop}\label{local}
Let $\Omega \subset \mathbb{R}^n$ $(n \geq 1)$ be a smooth bounded domain. Suppose that $(\ref{eq1.2})$ and $(\ref{eq1.1})$ are valid. Then one can find $T_{\max } \in(0, \infty]$ and uniquely determined functions 
\begin{align*}
\begin{aligned}
& u \in C^0\left(\overline{\Omega} \times\left[0, T_{\max }\right)\right) \cap C^{2,1}\left(\overline{\Omega} \times\left(0, T_{\max }\right)\right), \\
& v \in C^{2,0}\left(\overline{\Omega} \times\left[0, T_{\max }\right)\right), \\
& w \in C^0\left(\overline{\Omega} \times\left[0, T_{\max }\right)\right) \cap C^{2,1}\left(\overline{\Omega} \times\left(0, T_{\max }\right)\right), \\
& z \in C^{2,0}\left(\overline{\Omega} \times\left[0, T_{\max }\right)\right),
\end{aligned}
\end{align*}
solving \eqref{eq1.1.0} in the classical sense in $\Omega \times\left(0, T_{\max }\right)$ and fulfilling the following extensibility property:
\begin{align}\label{extensibility}
\text { if } T_{\max }<\infty \text {, then } \limsup _{t \nearrow T_{\max }}\left(\|u(\cdot, t)\|_{L^{\infty}(\Omega)}+\|w(\cdot, t)\|_{L^{\infty}(\Omega)}\right)=\infty \text {. }
\end{align}
Moreover, $u, w>0$ in $\overline{\Omega} \times [0,T_{\max})$ and $\int_{\Omega} v(\cdot,t) dx=\int_{\Omega} z(\cdot,t) dx=0$ for all $t \in (0,T_{\max})$, as well as
\begin{align*}
\int_\Omega u(t) dx=\int_\Omega u_0 dx \ \ \text{and} \ \
\int_\Omega w(t) dx=\int_\Omega w_0  dx, \quad t \in (0,T_{\max}).
\end{align*}

If in addition $\Omega=B_R(0)$ for some $R>0$, and $\left(u_0, w_0\right)$ is a triplet of radially symmetric functions, then $u, v, w, z$ are all radially symmetric.
\end{prop}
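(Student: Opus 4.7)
The plan is to apply a standard Banach fixed point argument, essentially as in \cite{2015-DCDSSB-TaoWinkler,2010-NA-WinklerDjie,2025--ZengLi}. Since the second and fourth equations of \eqref{eq1.1.0} are Poisson--Neumann problems whose compatibility conditions $\int_\Omega(w-\mu_w)\,dx=0$ and $\int_\Omega(u-\mu_u)\,dx=0$ are built in, standard elliptic theory yields unique zero-mean solutions $v=v[w]$ and $z=z[u]$ with $\|v\|_{W^{2,\rho}(\Omega)}\le C\|w\|_{L^\rho(\Omega)}$ and $\|z\|_{W^{2,\rho}(\Omega)}\le C\|u\|_{L^\rho(\Omega)}$ for every $\rho\in(1,\infty)$; choosing $\rho>n$ gives $L^\infty$-control of the drifts $\nabla v$ and $\nabla z$ in terms of $\|w\|_{L^\infty}$ and $\|u\|_{L^\infty}$, respectively.

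On the closed convex set
\begin{equation*}
X_{T,R}:=\bigl\{(\tilde u,\tilde w)\in C^0(\overline{\Omega}\times[0,T])^2 : \|\tilde u-u_0\|_{L^\infty}+\|\tilde w-w_0\|_{L^\infty}\le R\bigr\},
\end{equation*}
I would define $\Phi(\tilde u,\tilde w):=(u,w)$ by first computing $v[\tilde w]$ and $z[\tilde u]$ from the elliptic problems and then solving the two decoupled quasilinear parabolic equations for $u$ and $w$ with these prescribed drifts. Assumption \eqref{eq1.2} provides a strictly positive $C^3$ diffusivity $D$ and smooth sensitivity $S$ on any bounded range, so Amann's quasilinear parabolic theory produces H\"older-regular unique solutions on $[0,T]$. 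Choosing $R$ sufficiently large and then $T=T(R)>0$ sufficiently small makes $\Phi$ self-map $X_{T,R}$ and act as a contraction; the contraction property ultimately reduces to the $L^\infty$-Lipschitz estimate $\|\nabla v[w_1]-\nabla v[w_2]\|_{L^\infty}\le C\|w_1-w_2\|_{L^\infty}$, which follows from $W^{2,\rho}$-elliptic regularity with $\rho>n$. The fixed point is the desired local classical solution with the stated regularity.

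Strict positivity of $u$ and $w$ then follows from the parabolic strong maximum principle applied to the expanded equations, using $S(0)=0$ so that the trivial state is a subsolution. Mass conservation is obtained by integrating the equations for $u$ and $w$ over $\Omega$ and invoking the Neumann boundary condition, while $\int_\Omega v(\cdot,t)\,dx=\int_\Omega z(\cdot,t)\,dx=0$ holds by construction. The extensibility dichotomy \eqref{extensibility} is obtained by noting that, once $\|u(\cdot,t)\|_{L^\infty}+\|w(\cdot,t)\|_{L^\infty}$ is a priori bounded on $[0,T_{\max})$, the elliptic regularity above keeps $\|\nabla v\|_{L^\infty}+\|\nabla z\|_{L^\infty}$ uniformly bounded, so the local existence time $T(R)$ can be chosen independently of the starting time and the solution extends beyond $T_{\max}$, contradicting maximality. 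Finally, when $\Omega=B_R(0)$ with radial data, rotational invariance together with the uniqueness statement forces $(u,v,w,z)$ to be radial. The only step requiring non-routine care is the contraction estimate itself, but this is precisely the argument already carried out in \cite{2015-DCDSSB-TaoWinkler,2025--ZengLi} and needs only notational adjustment to accommodate the quasilinear diffusion $D(u)$, since the local-in-time analysis only sees values of $u$ in a bounded range on which $D$ is smooth and uniformly elliptic.
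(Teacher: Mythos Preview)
Your proposal is correct and follows the same approach as the paper, which simply defers to \cite{2015-DCDSSB-TaoWinkler,2010-NA-WinklerDjie,2025--ZengLi} without giving any details. If anything, your outline is more explicit than what the paper provides.
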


\section{Finite-time blow-up. Proof of Theorem \ref{thm1_1} }\label{blowup}
\subsection{Comparison principle}\label{comparison}
Throughout this section, the domain and the non-trivial initial functions $u_0$ and $w_0$ are radially symmetric. 

Let 
\begin{align}\label{eq2_6}
\mu^{\star}:=\max\left\{\fint_{\Omega} u_0 ,\fint_{\Omega} w_0 \right\},\quad
\mu_{\star}:=\min\left\{\fint_{\Omega} u_0 ,\fint_{\Omega} w_0 \right\}.
\end{align}
For positive functions $\varphi, \psi\in C^0\left([0, T);C^1\left[0, R^n\right] \right)$, which satisfy $\varphi_s \geq 0$ and $\psi_s \geq 0$ on $\left(0, R^n\right) \times(0, T)$, and such that $\varphi(\cdot, t), \psi(\cdot, t) \in W_{l o c}^{2, \infty}\left(\left(0, R^n\right)\right)$ for all $t \in(0, T)$, we define the differential operators $\mathcal{P}$ and $\mathcal{Q}$ 
\begin{align}\label{eq2.3}
\begin{cases}
\begin{aligned}
&\mathcal{P}[\varphi, \psi](s, t):=\varphi_t-n^2 s^{2-\frac{2}{n}}D(n\varphi_{s}) \varphi_{s s}-S\left(n \varphi_s\right) \cdot\left(\psi-\frac{\mu^{\star} s}{n}\right), \\
& \mathcal{Q}[\varphi, \psi](s, t):=\psi_t-n^2 s^{2-\frac{2}{n}} \psi_{s s}-n \psi_s \cdot\left(\varphi-\frac{\mu^{\star} s}{n}\right),
\end{aligned}
\end{cases}
\end{align}
for $t \in(0, T)$ and a.e. $s \in\left(0, R^n\right)$.

The following transformation of (\ref{eq1.1.0}) adapts a meanwhile quite classical observation \cite{1992-TAMS-JaegerLuckhaus} to the present framework.
\begin{lem}
Let $\Omega = B_{R}(0) \subset \mathbb{R}^n$ $(n \geq 1)$ and $T>0$. Suppose that $(\ref{eq1.2})$ and $(\ref{eq1.1})$ are valid. Let
\begin{align}\label{eq2.1}
U(s,t):=\int^{s^\frac{1}{n}}_0 r^{n-1}u(r,t)dr,\quad
W(s,t):=\int^{s^\frac{1}{n}}_0 r^{n-1}w(r,t)dr
\end{align}
for all $s \in\left[0, R^n\right]$ and $ t \in\left[0, T_{\max }\right)$. Then, we have
\begin{align}\label{eq2_5}
\begin{split}
\begin{cases}
\mathcal{P}[U, W](s, t) \geq 0,\quad &s\in(0,R^n), t\in(0,T_{\max}), \\
\mathcal{Q}[U, W](s, t) \geq 0, \quad &s\in(0,R^n), t\in(0,T_{\max}),\\
U(0,t)=W(0,t)=0,\quad &t\in(0,T_{\max}),\\
U(R^n,t)=\frac{\mu_uR^n}{n}\geq\frac{\mu_{\star}R^n}{n},\ W(R^n,t)=\frac{\mu_wR^n}{n}\geq\frac{\mu_{\star}R^n}{n},\quad &t\in(0,T_{\max}),\\
U(s,0)=\int^{s^\frac{1}{n}}_0 r^{n-1}u_0(r,t)dr,\quad &s\in(0,R^n),\\
W(s,0)=\int^{s^\frac{1}{n}}_0 r^{n-1}w_0(r,t)dr,\quad &s\in(0,R^n).
\end{cases}
\end{split}
\end{align}
\end{lem}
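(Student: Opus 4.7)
The plan is to perform the standard Jäger--Luckhaus mass change of variables in the radial setting, compute the relations between derivatives of $U$ and $u$ (resp.\ $W$ and $w$), integrate the $v$- and $z$-equations once to obtain explicit formulas for $v_r$ and $z_r$, and finally obtain the desired differential inequalities by exploiting $\mu_u,\mu_w\le\mu^{\star}$.

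First I would differentiate the definitions (\ref{eq2.1}). By the Leibniz rule, $U_s(s,t)=\tfrac{1}{n}u(s^{1/n},t)$ and hence $U_{ss}(s,t)=\tfrac{1}{n^{2}}s^{\tfrac{1}{n}-1}u_r(s^{1/n},t)$, so that
\begin{align*}
u(s^{1/n},t)=nU_s(s,t),\qquad u_r(s^{1/n},t)=n^{2}s^{1-\tfrac{1}{n}}U_{ss}(s,t),
\end{align*}
with the analogous relations for $W$ and $w$. Writing the $u$-equation in radial form $r^{n-1}u_t=(r^{n-1}D(u)u_r)_r-(r^{n-1}S(u)v_r)_r$ and integrating from $0$ to $s^{1/n}$ kills the boundary terms at $0$ (since $u,u_r,v_r$ are regular at the origin) and produces
\begin{align*}
U_t=s^{\tfrac{n-1}{n}}D\bigl(nU_s\bigr)\,u_r(s^{1/n},t)\;-\;s^{\tfrac{n-1}{n}}S\bigl(nU_s\bigr)\,v_r(s^{1/n},t).
\end{align*}
Substituting the expression for $u_r$ from above gives $s^{\tfrac{n-1}{n}}\cdot n^{2}s^{1-\tfrac{1}{n}}=n^{2}s^{2-\tfrac{2}{n}}$ in the diffusion term.

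Next I would integrate the $v$-equation $\bigl(r^{n-1}v_r\bigr)_r=r^{n-1}(\mu_w-w)$ once, which yields $r^{n-1}v_r=\tfrac{\mu_w r^{n}}{n}-W(r^{n},t)$, i.e.
\begin{align*}
s^{\tfrac{n-1}{n}}v_r(s^{1/n},t)=\tfrac{\mu_w s}{n}-W(s,t).
\end{align*}
Plugging this into the formula for $U_t$ shows
\begin{align*}
U_t=n^{2}s^{2-\tfrac{2}{n}}D(nU_s)\,U_{ss}+S(nU_s)\Bigl(W-\tfrac{\mu_w s}{n}\Bigr).
\end{align*}
Comparing with the definition (\ref{eq2.3}) of $\mathcal{P}$ and using $\mu_w\le\mu^{\star}$ together with $S(nU_s)\ge 0$ gives
\begin{align*}
\mathcal{P}[U,W](s,t)=S(nU_s)\cdot\tfrac{(\mu^{\star}-\mu_w)s}{n}\ge 0.
\end{align*}
The same procedure applied to the $w$- and $z$-equations produces $s^{\tfrac{n-1}{n}}z_r(s^{1/n},t)=\tfrac{\mu_u s}{n}-U(s,t)$ and, after using $w=nW_s$,
\begin{align*}
W_t=n^{2}s^{2-\tfrac{2}{n}}W_{ss}+nW_s\Bigl(U-\tfrac{\mu_u s}{n}\Bigr),
\end{align*}
hence $\mathcal{Q}[U,W]=nW_s\cdot\tfrac{(\mu^{\star}-\mu_u)s}{n}\ge 0$.

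The boundary and initial identities are immediate: $U(0,t)=W(0,t)=0$ is the value of the integrals at their lower endpoint, while the conservation of total mass gives $\int_0^{R} r^{n-1}u(r,t)\,dr=\tfrac{\mu_u R^{n}}{n}$ (and similarly for $w$), after which the estimates $\mu_u,\mu_w\ge\mu_{\star}$ yield the stated inequalities $U(R^{n},t)\ge\tfrac{\mu_{\star}R^{n}}{n}$, $W(R^{n},t)\ge\tfrac{\mu_{\star}R^{n}}{n}$. The initial data expressions are just (\ref{eq2.1}) at $t=0$. The calculation is essentially routine; the only slightly delicate bookkeeping is the algebra of the powers $s^{\tfrac{n-1}{n}}\cdot s^{1-\tfrac{1}{n}}=s^{2-\tfrac{2}{n}}$ in combining the diffusion terms, and the sign-tracking when passing from $\mu_u,\mu_w$ to $\mu^{\star}$, which is where the inequalities (rather than equalities) in $\mathcal{P},\mathcal{Q}\ge 0$ originate.
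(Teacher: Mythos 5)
Your proof is correct and follows exactly the standard Jäger--Luckhaus mass reformulation that the paper refers to (via \cite{1992-TAMS-JaegerLuckhaus} and \cite{2010-NA-WinklerDjie}) without spelling out. The chain-rule identities $nU_s=u$, $u_r=n^{2}s^{1-1/n}U_{ss}$, the once-integrated elliptic equations for $v_r,z_r$, the exponent bookkeeping $s^{(n-1)/n}\cdot s^{1-1/n}=s^{2-2/n}$, and the final reduction to $\mathcal{P}[U,W]=S(nU_s)\tfrac{(\mu^{\star}-\mu_w)s}{n}\ge0$ and $\mathcal{Q}[U,W]=W_s(\mu^{\star}-\mu_u)s\ge0$ are all accurate.
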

\begin{proof}
Using (\ref{eq1.1.0}) and (\ref{eq2.1}), (\ref{eq2_5}) can be easily verified (cf. also \cite{2010-NA-WinklerDjie}).
\end{proof}
The parabolic system associated with (\ref{eq2_5}) permits a comparison argument, which plays an important role in the subsequent analysis of the explosion.
\begin{lem}\label{lem2.2}
Let $T>0$ and $\Omega = B_{R}(0) \subset \mathbb{R}^n$ $(n\geq 1)$. Assume that $D(s)$ and $S(s)$ satisfy $(\ref{eq1.2})$. Let $\underline{U}, \overline{U}, \underline{W}, \overline{W} \in C^1\left(\left[0, R^n\right] \times [0, T) \right)$ such that $\underline{U}_s, \overline{U}_s, \underline{W}_s, \overline{W}_s\geq0$ for $(s,t)\in(0,R^n) \times (0,T)$ as well as $\underline{U}(\cdot,t), \overline{U}(\cdot,t), \underline{W}(\cdot,t), \overline{W}(\cdot,t) \in W_{l o c}^{2, \infty}\left(\left(0, R^n\right)\right)$ for $t \in (0,T)$. Assume that
\begin{align}\label{eq2_9}
\begin{array}{lll}
\underline{U}(0, t) \leq \overline{U}(0, t), & \underline{U}\left(R^n, t\right) \leq \overline{U}\left(R^n, t\right), \quad t \in[0, T), \\
\underline{W}(0, t) \leq \overline{W}(0, t), & \underline{W}\left(R^n, t\right) \leq \overline{W}\left(R^n, t\right), \quad t \in[0, T),
\end{array}
\end{align}
and 
\begin{align}\label{eq2 9-1}
 \underline{U}(s, 0) \leq \overline{U}(s, 0) \ \text{and} \ \  \underline{W}(s, 0) \leq \overline{W}(s, 0),  \quad  s \in [0,R^n].
\end{align}
as well as 
\begin{align}\label{eq2_8}
\begin{array}{ll}
\mathcal{P}[\underline{U}, \underline{W}](s, t) \leq 0, & \mathcal{P}[\overline{U}, \overline{W}](s, t) \geq 0, \\
\mathcal{Q}[\underline{U}, \underline{W}](s, t) \leq 0, & \mathcal{Q}[\overline{U}, \overline{W}](s, t) \geq 0,
\end{array}
\end{align}
for all $t \in(0, T)$ and a.e. $s \in\left(0, R^n\right)$, then
\begin{align}\label{eq2_10}
\underline{U}(s, t) \leq \overline{U}(s, t), \quad
\underline{W}(s, t) \leq \overline{W}(s, t), 
\quad  (s,t) \in\left[0, R^n\right] \times [0, T).
\end{align}
\end{lem}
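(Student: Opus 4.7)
My plan is to subtract the two pairs of differential inequalities in \eqref{eq2_8}, introduce an exponential weight $e^{-Kt}$ with $K$ large, and run a parabolic maximum principle jointly on the differences $\Phi := \underline{U} - \overline{U}$ and $\Psi := \underline{W} - \overline{W}$. Fix an arbitrary $T' \in (0, T)$; by the $C^1$-hypothesis on the four functions, the derivatives $|\underline{U}_s|$, $|\overline{U}_s|$, $|\underline{W}_s|$, $|\overline{W}_s|$ are bounded by some constant $L > 0$ on $[0, R^n] \times [0, T']$. For a parameter $K > 0$ to be chosen, set $\tilde{\Phi} := e^{-Kt} \Phi$ and $\tilde{\Psi} := e^{-Kt} \Psi$. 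The goal is to show $\max(\tilde{\Phi}, \tilde{\Psi}) \leq 0$ on $[0, R^n] \times [0, T']$, after which \eqref{eq2_10} follows by letting $T' \nearrow T$.

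Suppose for contradiction that $M := \max_{[0,R^n] \times [0,T']} \max(\tilde{\Phi}, \tilde{\Psi}) > 0$. The boundary conditions in \eqref{eq2_9} together with the initial comparison \eqref{eq2 9-1} force this maximum to be attained at some interior point $(s_0, t_0) \in (0, R^n) \times (0, T']$. Without loss of generality, assume $M = \tilde{\Phi}(s_0, t_0)$; the other case is treated identically with $\mathcal{Q}$ in place of $\mathcal{P}$. At $(s_0, t_0)$ the standard (a.e.-valid) maximum relations yield $\Phi_s = 0$, $\Phi_{ss} \leq 0$, and $\Phi_t \geq K \Phi$. The crucial consequence of $\Phi_s(s_0, t_0) = 0$ is $\underline{U}_s(s_0, t_0) = \overline{U}_s(s_0, t_0) =: \alpha \geq 0$, so the coefficients $D(n\alpha)$ and $S(n\alpha)$ appearing in $\mathcal{P}[\underline{U}, \underline{W}]$ and $\mathcal{P}[\overline{U}, \overline{W}]$ coincide. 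Subtracting the two $\mathcal{P}$-inequalities in \eqref{eq2_8} and evaluating at $(s_0,t_0)$ yields
\begin{align*}
\Phi_t - n^2 s_0^{2-\frac{2}{n}} D(n\alpha)\, \Phi_{ss} - S(n\alpha)\, \Psi(s_0, t_0) \leq 0.
\end{align*}
Since $D(n\alpha) > 0$ and $\Phi_{ss} \leq 0$, the diffusion term is nonpositive, leaving $\Phi_t \leq S(n\alpha)\, \Psi(s_0, t_0)$. Combining with $\Psi(s_0,t_0) \leq \Phi(s_0,t_0) = M e^{Kt_0}$ (from $\tilde{\Psi} \leq \tilde{\Phi} = M$), $S(n\alpha) \geq 0$, and $\Phi_t(s_0,t_0) \geq K M e^{Kt_0}$ forces $K \leq S(n\alpha) \leq \max_{[0,nL]} S$, a finite constant independent of $K$; choosing $K$ strictly larger than this produces the desired contradiction. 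In the case $M = \tilde{\Psi}(s_0, t_0)$, a parallel argument uses $\underline{W}_s = \overline{W}_s =: \beta \leq L$ at the maximum and the identity $n[\underline{W}_s(\underline{U} - \tfrac{\mu^\star s}{n}) - \overline{W}_s(\overline{U} - \tfrac{\mu^\star s}{n})] = n\beta\, \Phi$ at this point to derive $K \leq n\beta \leq nL$, again contradicting large $K$.

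I expect the main obstacle to be the coupling between the two inequalities: $\mathcal{P}$ depends on $\underline{W}, \overline{W}$ through $\Psi$ and $\mathcal{Q}$ depends on $\underline{U}, \overline{U}$ through $\Phi$, so a single-component maximum principle cannot close the estimate. Using the joint maximum $\max(\tilde{\Phi}, \tilde{\Psi})$ resolves this, because whichever component realizes $M$ automatically dominates the other at the critical point, supplying the correct sign on the cross-coupling term. The only other technical point — that the functions are merely $W^{2,\infty}_{\mathrm{loc}}$ in $s$, so the second-derivative inequality $\Phi_{ss}(s_0,t_0) \leq 0$ is to be interpreted a.e.\ — can be handled by standard arguments (e.g., mollification in $s$ and passage to the limit, or an application of Bony's maximum principle for $W^{2,\infty}$ functions).
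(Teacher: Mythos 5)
Your proof is correct and follows essentially the same strategy as the paper: a joint parabolic maximum principle for the coupled pair $(\Phi,\Psi)$, an exponential time weight to absorb the cross-coupling, and the key observation that $\Phi_s=0$ (resp.\ $\Psi_s=0$) at an interior maximum forces the nonlinear coefficients $D(n\,\cdot)$ and $S(n\,\cdot)$ (resp.\ $n\,\cdot_s$) to coincide for the barred and underbarred functions, so the leading second-order terms collapse to a single term with a good sign. The only formal difference is that you use a multiplicative weight $e^{-Kt}$ while the paper uses an additive $\varepsilon e^{\lambda t}$ shift followed by $\varepsilon\searrow 0$; these are interchangeable here. Do note that the point you flag at the end is exactly what the paper spends the most care on: since $X(\cdot,t_0)\in W^{2,\infty}_{\mathrm{loc}}$ only, $\Phi_{ss}(s_0,t_0)$ need not exist, so one cannot literally evaluate the subtracted inequality at $s_0$. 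The paper instead picks a sequence $s_j\to s_0$ in the set where $X_{ss}$ exists with $\liminf_j X_{ss}(s_j,t_0)\le 0$, and — crucially — at such $s_j$ the gradients $\underline U_s(s_j,t_0)$ and $\overline U_s(s_j,t_0)$ need not coincide, so the term $n^2 s_j^{2-2/n}\bigl(D(n\underline U_s)-D(n\overline U_s)\bigr)\overline U_{ss}$ does not vanish pointwise; it is shown to tend to $0$ using continuity of $D$ together with the local $L^\infty$ bound on $\overline U_{ss}$. Your parenthetical ``mollification or Bony'' gestures at the right fix, but mollification is delicate here because $D$ enters nonlinearly through $\underline U_s$, and the Bony-type limiting argument is precisely the one the paper spells out; you would need to carry that step out rather than treat $\Phi_{ss}(s_0,t_0)\le 0$ as given.
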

\begin{proof}
For any $T_0 \in (0, T)$, the functions $X(s,t)$ and $Y(s,t)$ defined by 
\begin{align*}
	&X(s, t) := \underline{U}(s, t) - \overline{U}(s, t) - \varepsilon \mathrm{e}^{\lambda t}, \quad (s,t) \in [0, R^n] \times [0, T_0], \\
   &Y(s, t) := \underline{W}(s, t) - \overline{W}(s, t) - \varepsilon \mathrm{e}^{\lambda t}, \quad (s,t) \in [0, R^n] \times [0, T_0]
\end{align*}
with some $\varepsilon,\lambda> 0$. Due to \eqref{eq2_9} and \eqref{eq2 9-1}, we infer that $X(0,t), Y(0,t)<0$ and $X(R^n,t)$, $Y(R^n,t)<0$ for all $t \in [0, T_0]$ as well as $X(s,0), Y(s,0)<0$ for all $s \in [0, R^n]$. We claim that
\begin{align}\label{eq2_12}
\max _{(s, t) \in[0, R^n] \times[0, T_0]}\{ X(s,t),Y(s,t)\} < 0, 
\end{align}
when 
\begin{align}\label{eq2_11}
\lambda \geq \max\{2\|S\left(n\underline{U}_s\right)\|_{L^{\infty}([0, R^n] \times [0, T_0])}, 2\|n\underline{W}_s\|_{L^{\infty}([0, R^n] \times [0, T_0])}\}.
\end{align}
Indeed, if (\ref{eq2_12}) were false, at least one of the following two cases must hold.

$\textbf{Case\ 1.}$ There exist $s_X\in(0,R^n)$ and $t_0\in(0,T_0]$ such that
\begin{align}\label{eq2_13}
\max _{(s, t) \in[0, R^n] \times[0, t_0]}\{ X(s,t),Y(s,t)\}=X(s_X,t_0)=0.
\end{align}

Then, we have
	\begin{align}\label{eq2_15}
		X_t(s_X, t_0) \geq 0
	\end{align}
	and
	\begin{align}\label{eq2_16}
		X_s(s_X, t_0) = 0.
	\end{align}
Moreover, since $X(\cdot, t_0) \in W_{\text{loc}}^{2, \infty}\left((0, R^n)\right)$, we can find a null set $N(t_0) \subset (0, R^n)$ such that $X_{ss}(s, t_0)$ exists for $s \in (0, R^n) \backslash N(t_0)$. Owing to $\left(0, R^n\right) \backslash N(t_0)$ is dense in $\left(0, R^n\right)$, along with \eqref{eq2_13}, we can fix $\left(s_j\right)_{j \in \mathbb{N}} \subset (0, R^n) \backslash N(t_0)$ such that $s_j \rightarrow s_X$ as $j \rightarrow \infty$ and
\begin{align}\label{eq2_18}
\liminf _{j \rightarrow \infty} X_{ss}(s_j, t_0) \leq 0.
\end{align}
According to \eqref{eq2_8}, we obtain
\begin{align}\label{Xt}
		X_t(s_j, t_0) &= \underline{U}_t(s_j, t_0) - \overline{U}_t(s_j, t_0) - \lambda \varepsilon \mathrm{e}^{\lambda t_0} \nonumber \\
& \leq n^2 {s_j}^{2-\frac{2}{n}}\left(D\left(n \underline{U}_s\left(s_j, t_0\right)\right) \underline{U}_{s s}\left(s_j, t_0\right)-D\left(n \overline{U}_s\left(s_j, t_0\right)\right) \overline{U}_{s s}\left(s_j, t_0\right)\right) \nonumber\\
& \quad +S\left(n \underline{U}_s\left(s_j, t_0\right)\right)\left(\underline{W}\left(s_j, t_0\right)-\frac{\mu^{\star} s_j}{n}\right)-S\left(n \overline{U}_s\left(s_j, t_0\right)\right)\left(\overline{W}\left(s_j, t_0\right)-\frac{\mu^{\star} s_j}{n}\right) \nonumber\\
& \quad -\lambda \varepsilon e^{\lambda t_0}\nonumber \\
& =  n^2 {s_j}^{2-\frac{2}{n}} D\left(n \underline{U}_s\left(s_j, t_0\right)\right) X_{s s}\left(s_j, t_0\right) \nonumber \\
& \quad +n^2 {s_j}^{2-\frac{2}{n}} \left(D\left(n \underline{U}_s\left(s_j, t_0\right)\right)-D\left(n \overline{U}_s\left(s_j, t_0\right)\right)\right) \cdot \overline{U}_{s s}\left(s_j, t_0\right) \nonumber \\
&\quad+  S\left(n \underline{U}_s\left(s_j, t_0\right)\right) \left(\underline{W}\left(s_j, t_0\right)-\frac{\mu^* s_j}{n}\right)-S\left(n \overline{U}_s\left(s_j, t_0\right)\right) \left(\overline{W}\left(s_j, t_0\right)-\frac{\mu^* s_j}{n}\right)\nonumber \\
& \quad-\lambda \varepsilon e^{\lambda t_0}
.
\end{align}
Due to $\overline{U}_s(s_X, t_0)= \underline{U}_s(s_X, t_0) \geq 0$, as asserted by (\ref{eq2_16}), and the continuity of $D(s),S(s)$ on $(0,\infty)$, we infer that
\begin{align*}
D\left(n \underline{U}_s\left(s_j, t_0\right)\right)-D\left(n \overline{U}_s\left(s_j, t_0\right)\right) \rightarrow 0 
\ \text{and} \ S\left(n \underline{U}_s\left(s_j, t_0\right)\right)-S\left(n \overline{U}_s\left(s_j, t_0\right)\right) \rightarrow 0,
\ \text{as} \ j \rightarrow \infty.
\end{align*}
Thus, using (\ref{eq2_18}) and taking $j \rightarrow \infty$ in (\ref{Xt}), we obtain
	\begin{align*}
		X_t(s_X, t_0) 
	&\leq S\left(n\underline{U}_s(s_X,t_0)\right)
	\left(\underline{W}(s_X,t_0)-\overline{W}(s_X,t_0)\right)-\lambda\varepsilon e^{\lambda t_0}\\
    &=S\left(n\underline{U}_s(s_X,t_0)\right) (Y(s_X, t_0) + \varepsilon \mathrm{e}^{\lambda t_0}) -\lambda\varepsilon e^{\lambda t_0}.
	\end{align*}
	Since $ Y(s_X, t_0) \leq 0$ by \eqref{eq2_13}, along with \eqref{eq2_11} and \eqref{eq2_15}, we have
	\begin{align*}
		0 \leq X_t(s_X, t_0) &\leq S\left(n \underline{U}_s(s_X, t_0)\right) \varepsilon \mathrm{e}^{\lambda t_0} - \lambda \varepsilon \mathrm{e}^{\lambda t_0} \leq -\frac{\lambda \varepsilon \mathrm{e}^{\lambda t_0}}{2},
	\end{align*}
which implies a contradiction. Thus, (\ref{eq2_13}) is false.
 
$\textbf{Case\ 2.}$ There exist $s_Y\in(0,R^n)$ and $t_1\in(0,T_0]$ such that
\begin{align}\label{eq2_14}
\max _{(s, t) \in[0, R^n] \times[0, t_1]}\{ X(s,t),Y(s,t)\}=Y(s_Y,t_1)=0.
\end{align}
As in Case 1, along with $\lambda \geq 2\|n\underline{W}_s\|_{L^{\infty}([0, R^n] \times [0, T_0])}$, we can conclude that (\ref{eq2_14}) is also false.

Combining these two cases, we therefore obtain \eqref{eq2_12}. As a consequence of letting $\varepsilon \searrow 0$ and $T_0 \nearrow T$, \eqref{eq2_10} follows.
\end{proof}

\subsection{ Construction of subsolution pairs}\label{subsolution}
In this section, based on the comparison principle, we construct subsolutions inspired by \cite{2025-JDE-TaoWinkler} to detect finite-time blow-up.



The following lemma selects three parameters $\delta$, $\alpha$ and $\beta$, which will be used in the construction of subsolutions. The choices
of parameters $\delta$, $\alpha$ and $\beta$ here are different from those in \cite{2025-JDE-TaoWinkler}.
\begin{lem}\label{alphabeta}
Let $n \geq 3$. Suppose that $p,q$ satisfy 
\begin{align}\label{result}
q-p>2-\frac{n}{2} \quad \text{and} \quad q>1-\frac{n}{2},
\end{align}
then there exist $\beta \in (0,1)$ and 
\begin{align}\label{alpha}
\alpha \in \left(0,1-\frac{2}{n}\right)\ \text{ and } \ \delta \in (0,\frac{2}{n})
\end{align}
such that
\begin{align}\label{q1}
(1-\alpha)q+\alpha-\beta-\delta>0,
\end{align}
and
\begin{align}\label{q3}
(1-\alpha)(q-p)+\alpha-\beta-\frac{2}{n}>0.
\end{align}
\end{lem}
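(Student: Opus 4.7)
The plan is to push $\alpha$ as close as possible to the upper endpoint $1-\tfrac{2}{n}$ of the interval prescribed in \eqref{alpha}, and then to pick $\beta$ and $\delta$ small. The underlying observation is that the two affine functions
\begin{align*}
f(\alpha) := (1-\alpha)q + \alpha, \qquad g(\alpha) := (1-\alpha)(q-p) + \alpha,
\end{align*}
govern the left-hand sides of \eqref{q1} and \eqref{q3} respectively (since $(1-\alpha)q+\alpha-\beta-\delta = f(\alpha)-\beta-\delta$ and $(1-\alpha)(q-p)+\alpha-\beta-\tfrac{2}{n} = g(\alpha)-\beta-\tfrac{2}{n}$), and evaluating them at $\alpha = 1-\tfrac{2}{n}$ yields exactly the two quantities flagged by the hypotheses \eqref{result}.

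Specifically, a direct calculation gives
\begin{align*}
f\!\left(1 - \tfrac{2}{n}\right) = \frac{2q + n - 2}{n}, \qquad g\!\left(1 - \tfrac{2}{n}\right) - \frac{2}{n} = \frac{2(q-p) + n - 4}{n},
\end{align*}
so that $q > 1 - \tfrac{n}{2}$ is equivalent to $f(1-\tfrac{2}{n}) > 0$, while $q - p > 2 - \tfrac{n}{2}$ is equivalent to $g(1-\tfrac{2}{n}) - \tfrac{2}{n} > 0$. This is the key algebraic match between the hypotheses and the quantities one wants to make positive.

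By continuity one can then fix $\alpha \in \bigl(0, 1 - \tfrac{2}{n}\bigr)$ close enough to $1 - \tfrac{2}{n}$ so that $\varepsilon_1 := f(\alpha) > 0$ and $\varepsilon_2 := g(\alpha) - \tfrac{2}{n} > 0$ both still hold, and afterwards choose $\beta \in \bigl(0, \min\{1, \varepsilon_1, \varepsilon_2\}\bigr)$ and $\delta \in \bigl(0, \min\{\tfrac{2}{n}, \varepsilon_1 - \beta\}\bigr)$. With these choices, \eqref{q1} reduces to $\varepsilon_1 - \beta - \delta > 0$ and \eqref{q3} reduces to $\varepsilon_2 - \beta > 0$, both of which are automatic.

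Since the statement is purely algebraic, I do not foresee any genuine obstacle. The only conceptual point, which essentially writes the proof by itself, is recognising that the two strict inequalities in \eqref{result} correspond precisely to positivity of $f$ and $g - \tfrac{2}{n}$ at the excluded right endpoint $\alpha = 1 - \tfrac{2}{n}$; after that, a continuity argument and a smallness choice for $\beta$ and $\delta$ deliver admissible parameters.
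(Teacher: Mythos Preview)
Your proposal is correct and follows essentially the same approach as the paper: both arguments evaluate the two expressions at the boundary point $(\delta,\alpha,\beta)=(0,\,1-\tfrac{2}{n},\,0)$, observe that the hypotheses \eqref{result} are precisely what make the limits strictly positive, and then invoke continuity to obtain admissible interior parameters. The paper phrases this as a single joint limit in $(\delta,\alpha,\beta)$ while you sequence the choices (first $\alpha$, then $\beta$, then $\delta$), but the underlying idea is identical.
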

\begin{proof}
According to (\ref{result}), we have $(1-\alpha)q+\alpha-\beta-\delta \rightarrow \frac{2}{n}(q-(1-\frac{n}{2}))>0$, $(1-\alpha)(q-p)+\alpha-\beta-\frac{2}{n} \rightarrow \frac{2}{n}((q-p)-(2-\frac{n}{2})) >0$ as $(\delta, \alpha, \beta) \rightarrow (0, 1-\frac{2}{n}, 0)$. Then, there exist positive constants $\delta_1 \in (0,\frac{2}{n})$, $\alpha_1 \in (0,1-\frac{2}{n})$ and $\beta_1\in (0,\frac{1}{2})$ such that (\ref{q1}) and (\ref{q3}) hold for $\delta \in (0,\delta_1)$ , $\alpha \in (\alpha_1, 1-\frac{2}{n})$ and $\beta \in (0,\beta_1)$.
\end{proof}
We are now ready to specify the subsolutions for (\ref{eq2_5}). Let $\theta>1$ be a positive constant, which will be determined later in Lemma~\ref{lem3.4} and let $\alpha, \beta \in (0,1)$ be from Lemma~\ref{alphabeta}. For any $T>0$ and $y \in C^1([0, T))$ with $y(t)>\frac{1}{R^n}$ for all $t \in (0,T)$, we define $$\underline{U}, \underline{W} \in C^1([0, R^n] \times [0, T)) \cap C^0([0, T); W^{2, \infty}((0, R^n))) \cap C^2\Big( [0, R^n] \setminus \Big\{\frac{1}{y(t)}\Big\} \Big)$$ as follows
\begin{equation}\label{eq3_7}
	\begin{cases}
		\begin{array}{ll}
			\underline{U}(s, t) := \mathrm{e}^{-\theta t} \Phi(s, t), & s \in \left[0, R^n\right], t \in [0, T), \\
			\underline{W}(s, t) := \mathrm{e}^{-\theta t} \Psi(s, t), & s \in \left[0, R^n\right], t \in [0, T)
		\end{array}
	\end{cases}
\end{equation}
with 
\begin{align}\label{phi}
	& \Phi(s, t) =
	\begin{cases}
		l y^{1 - \alpha}(t) s, & t \in [0, T), s \in \left[0, \frac{1}{y(t)}\right], \\
		l \alpha^{-\alpha} \cdot \left(s - \frac{1 - \alpha}{y(t)}\right)^\alpha, & t \in [0, T), s \in \left(\frac{1}{y(t)}, R^n\right],
	\end{cases} 
\end{align}
\begin{align}\label{psi}
	& \Psi(s, t) =
	\begin{cases}
		l y^{1 - \beta}(t) s, & t \in [0, T), s \in \left[0, \frac{1}{y(t)}\right], \\
		l \beta^{-\beta} \cdot \left(s - \frac{1 - \beta}{y(t)}\right)^\beta, & t \in [0, T), s \in \left(\frac{1}{y(t)}, R^n\right],
	\end{cases}
\end{align}
where
\begin{align}\label{eq3_4}
	l = \frac{\mu_{\star} R^n}{n\mathrm{e}^{\frac{1}{\mathrm{e}}}(R^n + 1)}
\end{align}
with $\mu_{\star}$ defined in (\ref{eq2_6}). Then, we have
\begin{align}\label{phis}
	& \Phi_s(s, t) =
	\begin{cases}
		l y^{1 - \alpha}(t), & t \in (0, T), s \in \left(0, \frac{1}{y(t)}\right), \\
		l \alpha^{1 - \alpha} \cdot \left(s - \frac{1 - \alpha}{y(t)}\right)^{\alpha - 1}, & t \in (0, T), s \in \left(\frac{1}{y(t)}, R^n\right),
	\end{cases} 
\end{align}
\begin{align}\label{psis}
	& \Psi_s(s, t) =
	\begin{cases}
		l y^{1 - \beta}(t), & t \in (0, T), s \in \left(0, \frac{1}{y(t)}\right), \\
		l \beta^{1 - \beta} \cdot \left(s - \frac{1 - \beta}{y(t)}\right)^{\beta - 1}, & t \in (0, T), s \in \left(\frac{1}{y(t)}, R^n\right)
	\end{cases}
\end{align}
and
\begin{align}\label{phiss}
	& \Phi_{ss}(s, t) =
	\begin{cases}
		0, & t \in (0, T), s \in \left(0, \frac{1}{y(t)}\right), \\
		l \alpha^{1 - \alpha} (\alpha - 1) \cdot \left(s - \frac{1 - \alpha}{y(t)}\right)^{\alpha - 2}, & t \in (0, T), s \in \left(\frac{1}{y(t)}, R^n\right),
	\end{cases} 
\end{align}
\begin{align}\label{psiss}
	& \Psi_{ss}(s, t) =
	\begin{cases}
		0, & t \in (0, T), s \in \left(0, \frac{1}{y(t)}\right), \\
		l \beta^{1 - \beta} ( \beta-1) \cdot \left(s - \frac{1 - \beta}{y(t)}\right)^{\beta - 2}, & t \in (0, T), s \in \left(\frac{1}{y(t)}, R^n\right),
	\end{cases}
\end{align}
as well as
\begin{align}\label{phit}
	& \Phi_t(s, t) =
	\begin{cases}
		l (1 - \alpha) y^{\prime}(t) y^{-\alpha}(t) s, & t \in (0, T), s \in \left(0, \frac{1}{y(t)}\right), \\
		l \alpha^{1 - \alpha} (1 - \alpha) \cdot \left(s - \frac{1 - \alpha}{y(t)}\right)^{\alpha - 1} \frac{y^{\prime}(t)}{y^2(t)}, & t \in (0, T), s \in \left(\frac{1}{y(t)}, R^n\right),
	\end{cases} 
\end{align}
\begin{align}\label{psit}
	& \Psi_t(s, t) =
	\begin{cases}
		l (1 - \beta) y^{\prime}(t) y^{-\beta}(t) s, & t \in (0, T), s \in \left(0, \frac{1}{y(t)}\right), \\
		l \beta^{1 - \beta} (1 - \beta) \cdot \left(s - \frac{1 - \beta}{y(t)}\right)^{\beta - 1} \frac{y^{\prime}(t)}{y^2(t)}, & t \in (0, T), s \in \left(\frac{1}{y(t)}, R^n\right).
	\end{cases}
\end{align}
\begin{lem}\label{s}
Let $n \geq 3$ and $\alpha, \beta$ be taken from Lemma~\ref{alphabeta}. Then, $\underline{U}$ and $\underline{W}$ satisfy
\begin{align}\label{s=0}
\underline{U}(0, t) = 0, 
\quad \underline{W}(0, t) = 0,
\quad t \in [0, T)
\end{align}
and
\begin{align*}
\underline{U}(R^n, t) \leq \frac{\mu_{\star} R^n}{n}, \quad \underline{W}(R^n, t) \leq \frac{\mu_{\star} R^n}{n},
\quad t \in [0, T),
\end{align*}
as well as
\begin{align*}
\underline{U}(s, 0) \leq U(s, 0), \quad 
\underline{W}(s, 0) \leq W(s, 0),
\quad s \in [0,R^n].
\end{align*}
\end{lem}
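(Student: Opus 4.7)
The plan is to check the three conclusions by direct inspection of the explicit formulas (\ref{eq3_7})--(\ref{psi}) and (\ref{eq3_4}), reserving the only substantive step for the third, where $M_1$ and $M_2$ have to be calibrated to the subsolution. The first claim is immediate: evaluating the top branch of (\ref{phi}) at $s=0$ yields $\Phi(0,t) = l\, y^{1-\alpha}(t)\cdot 0 = 0$, and likewise $\Psi(0,t)=0$, so the factor $\mathrm{e}^{-\theta t}$ delivers $\underline{U}(0,t)=\underline{W}(0,t)=0$.

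For the second claim, since the hypothesis $y(t)>1/R^n$ forces $1/y(t)<R^n$, the point $s=R^n$ falls in the lower branch of (\ref{phi}), so
$$\underline{U}(R^n,t) = \mathrm{e}^{-\theta t}\, l\, \alpha^{-\alpha}\Bigl(R^n - \tfrac{1-\alpha}{y(t)}\Bigr)^{\!\alpha} \leq l\, \alpha^{-\alpha} R^{n\alpha}.$$
Two elementary bounds suffice: the map $\alpha\mapsto \alpha^{-\alpha}$ attains its maximum $\mathrm{e}^{1/\mathrm{e}}$ on $(0,1)$ at $\alpha=1/\mathrm{e}$ (an easy calculus exercise), and $R^{n\alpha}\leq R^n+1$ since $\alpha\in(0,1)$. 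Substituting these into the definition (\ref{eq3_4}) of $l$ collapses the right-hand side to exactly $\mu_\star R^n/n$, which is the claim. The argument for $\underline{W}$ is identical with $\beta$ in place of $\alpha$.

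For the third claim, I define $M_1, M_2\in C^0([0,R])$ by $M_1(r):=n|B_1|\,\underline{U}(r^n,0)$ and $M_2(r):=n|B_1|\,\underline{W}(r^n,0)$; continuity is inherited from $\underline{U}(\cdot,0),\underline{W}(\cdot,0)$ via (\ref{eq3_7}). Applying the radial shell decomposition to (\ref{eq2.1}) gives $U(s,0) = (n|B_1|)^{-1}\int_{B_{s^{1/n}}(0)} u_0\, dx$, so the hypothesis (\ref{eq1.8}) rewrites as $U(s,0)\geq \underline{U}(s,0)$ on $(0,R^n)$. Both sides vanish at $s=0$, and at $s=R^n$ the estimate from the second claim combined with Proposition~\ref{local} gives $\underline{U}(R^n,0)\leq \mu_\star R^n/n\leq \mu_u R^n/n = U(R^n,0)$; the comparison for $\underline{W}$ is identical. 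The only point requiring care is the sharp matching between the constant $l$ fixed in (\ref{eq3_4}) and the tight bound $\alpha^{-\alpha}\leq \mathrm{e}^{1/\mathrm{e}}$, but this has been arranged by construction, so no genuine obstacle arises.
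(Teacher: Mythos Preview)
Your proof is correct and follows essentially the same route as the paper: both verify the first claim directly from (\ref{phi})--(\ref{psi}), both estimate $\underline{U}(R^n,t)$ via the identical pair of bounds $\alpha^{-\alpha}\le \mathrm{e}^{1/\mathrm{e}}$ and $R^{n\alpha}\le R^n+1$ combined with the definition (\ref{eq3_4}) of $l$, and both define $M_1,M_2$ as $\omega_n$ (equivalently $n|B_1|$) times $\underline{U}(\cdot,0),\underline{W}(\cdot,0)$. Your treatment of the third claim is slightly more careful than the paper's in that you explicitly check the endpoints $s=0$ and $s=R^n$, whereas the paper tacitly passes from $r\in(0,R)$ in (\ref{eq1.8}) to $s\in[0,R^n]$ by continuity; this is a minor refinement, not a different argument.
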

\begin{proof}
By the definition of $\underline{U}$ and $\underline{W}$, we deduce (\ref{s=0}). Applying the definition of $\underline{U}$ and $l$ in (\ref{eq3_4}), along with $\alpha^{-\alpha}=\mathrm{e}^{-\alpha\mathrm{ln}\alpha} \leq \mathrm{e}^{\frac{1}{\mathrm{e}}}$, we deduce that
	\begin{align*}
		\begin{aligned}
			\underline{U}(R^n, t) &= \mathrm{e}^{-\theta t} \alpha^{-\alpha} l \left( R^n - \frac{1 - \alpha}{y(t)} \right)^{\alpha} 
			\leq \alpha^{-\alpha} l R^{n\alpha} 
			= \alpha^{-\alpha} R^{n\alpha} \frac{\mu_{\star} R^n}{n \mathrm{e}^{\frac{1}{\mathrm{e}}}(R^n + 1)} \\
			&\leq \frac{\mu_{\star} R^n}{n} \cdot \frac{R^{n\alpha}}{R^n + 1} 
\leq \frac{\mu_{\star} R^n}{n}.
		\end{aligned}
	\end{align*}
In \eqref{eq1.8}, we define $M_1(r)$ and $M_2(r)$ as follows
\begin{align*}
M_1(r) = \omega_n  \underline{U}(r^n, 0), \quad \quad M_2(r) = \omega_n \underline{W}(r^n, 0),
 \quad  r \in [0, R],
\end{align*}
where $\omega_n$ is the surface area of the unit sphere. Thus, we have
\begin{align*}
U(s, 0)  =\frac{1}{\omega_n} \int_{B_{s^{\frac{1}{n}}}(0)} u_0 \mathrm{d} x 
\geq \frac{1}{\omega_n} \cdot M_1\left(s^{\frac{1}{n}}\right)
=\underline{U}(s, 0),
\quad  s \in\left[0, R^n\right].
\end{align*}
Similarly, we obtain $\underline{W}(R^n, t) \leq \frac{\mu_{\star} R^n}{n}$ for all $t\in[0,T)$ and $W(s, 0) \geq \underline{W}(s, 0)$ for all $s \in\left[0, R^n\right]$. We complete our proof.
\end{proof}

We proceed by dividing $[0, R^n]$ into three regions. In each of these regions, we will prove $\mathcal{P}[\underline{U}, \underline{W}](s, t) \leq 0$ and $\mathcal{Q}[\underline{U}, \underline{W}](s, t) \leq 0$. We begin by verifying these near $s = 0$.

\begin{lem}\label{lem3.1}
Let $\Omega = B_{R}(0) \subset \mathbb{R}^n$ $(n \geq 3)$. Assume that $D(s)$ and $S(s)$ satisfy $(\ref{eq1.2})$ as well as \eqref{eq1.3} and \eqref{eq1.4} with some $k_D,k_S > 0$ and $p,q\in \mathbb{R}$ fulfilling $(\ref{result-all})$. Suppose that $\delta$, $\alpha$ and $\beta$ are taken from Lemma~\ref{alphabeta}. There exists $y_0=y_0(\alpha,\beta,l,\mu^{\star})$ satisfying
\begin{align}\label{ystar}
		y_0 > y_{\star}:= \max\Big\{1,\frac{1}{R^n},\left(\frac{\mathrm{e}}{nl}\right)^{\frac{1}{1-\alpha}},\left(\frac{2\mu^{\star}\mathrm{e}}{nl}\right)^{\frac{1}{1-\beta}},\left(\frac{2\mu^{\star}\mathrm{e}}{nl}\right)^{\frac{1}{1-\alpha}}\Big\},
\end{align}
such that if $T>0$ and $y(t) \in C^1([0, T))$ satisfy
\begin{align}\label{y1}
\left\{\begin{array}{l}
0 \leq y^{\prime}(t)\leq 
\min\big\{\frac{k_Sn^ql^{q}}{2\mathrm{e}^{|q|+1}}  , \frac{ nl}{2\mathrm{e}^{2}}  \big\}y^{1+\delta}(t), \quad t\in (0,T), \\
y(0) \geq y_0,
\end{array}\right.
\end{align}
then, for any $\theta>0$, the functions $\underline{U}$ and $\underline{W}$, defined in $(\ref{eq3_7})$, have the following properties
\begin{align*}
\mathcal{P}[\underline{U}, \underline{W}](s, t) \leq 0, \quad \mathcal{Q}[\underline{U}, \underline{W}](s, t) \leq 0, 
\end{align*}
for all $t \in (0,T) \cap \left(0, \frac{1}{\theta}\right)$ and $s \in \left(0, \frac{1}{y(t)}\right)$.
\end{lem}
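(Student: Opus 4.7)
The plan is to exploit the fact that on the inner branch $s\in(0,1/y(t))$ both $\Phi$ and $\Psi$ are linear in $s$, so \eqref{phiss}--\eqref{psiss} give $\underline U_{ss}=\underline W_{ss}=0$, and the two diffusion terms in \eqref{eq2.3} disappear. After substituting \eqref{eq3_7} it therefore suffices to establish the pointwise bounds
\[
\underline U_t \le S(n\underline U_s)\,\Bigl(\underline W-\tfrac{\mu^{\star}s}{n}\Bigr),\qquad
\underline W_t \le n\underline W_s\,\Bigl(\underline U-\tfrac{\mu^{\star}s}{n}\Bigr),
\]
at each $(s,t)$ with $s\in(0,1/y(t))$ and $t\in(0,T)\cap(0,1/\theta)$.

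First I would use the three entries in \eqref{ystar} together with the bound $e^{-\theta t}\ge e^{-1}$, valid on $t<1/\theta$, to secure three preliminary estimates: (i) $n\underline U_s = ne^{-\theta t}ly^{1-\alpha}\ge 1$, so \eqref{eq1.4} gives $S(n\underline U_s)\ge k_S(ne^{-\theta t}ly^{1-\alpha})^q$; (ii) $e^{-\theta t}ly^{1-\beta}\ge 2\mu^{\star}/n$, hence $\underline W-\mu^{\star}s/n\ge \tfrac12 s\,e^{-\theta t}ly^{1-\beta}$; and (iii) $e^{-\theta t}ly^{1-\alpha}\ge 2\mu^{\star}/n$, hence $\underline U-\mu^{\star}s/n\ge \tfrac12 s\,e^{-\theta t}ly^{1-\alpha}$. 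Each bound matches one entry in the $\max$ defining $y_{\star}$.

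Next, I would discard the helpful negative term $-\theta e^{-\theta t}ly^{1-\alpha}s$ in $\underline U_t$. Plugging in (i)--(ii) and cancelling the common factor $s\,l\,e^{-\theta t}$, the $\mathcal P$-inequality collapses to
\[
(1-\alpha)\,y'\,y^{-\alpha}\;\le\;\tfrac{k_S n^q l^q}{2}\,(e^{-\theta t})^q\,y^{(1-\alpha)q+1-\beta}.
\]
Because $t\in(0,1/\theta)$ forces $(e^{-\theta t})^q\ge e^{-|q|}$ regardless of the sign of $q$, and because \eqref{q1} asserts $(1-\alpha)q+\alpha-\beta\ge\delta$, the hypothesis $y\ge 1$ lets me replace the $y$-exponent on the right by $1+\delta$. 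The resulting bound is implied by the first term in the $\min$ in \eqref{y1}, since the mild slack $1-\alpha<1<e$ absorbs the $1-\alpha$ denominator.

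The argument for $\mathcal Q$ is analogous: after discarding the negative $-\theta$ contribution in $\underline W_t$ and using (iii), cancellation of $s\,e^{-\theta t}l$ reduces it to $y'\le \tfrac{nl}{2(1-\beta)}e^{-\theta t}y^{2-\alpha}$, and the choice $\alpha<1-\tfrac{2}{n}$ together with $\delta<\tfrac{2}{n}$ from \eqref{alpha} gives $2-\alpha>1+\delta$, so $y\ge 1$ again lets me replace the exponent by $1+\delta$ and match the second term in the $\min$ in \eqref{y1}. I do not expect any genuine obstacle: the diffusion is gone from this branch, so the argument is essentially a bookkeeping exercise that aligns the algebraic conditions of Lemma~\ref{alphabeta} with the thresholds in \eqref{ystar} and the two terms in \eqref{y1}. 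The only point that needs care is that $q$ is allowed to be negative, which forces the factor $(e^{-\theta t})^q$ to be absorbed by $e^{-|q|}$; this is exactly why the constant $e^{|q|+1}$, rather than a plain $e$, appears in the first term of \eqref{y1}.
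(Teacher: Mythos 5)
Your proposal is correct and follows essentially the same route as the paper's proof: exploit the vanishing of the diffusion term on the linear branch, use the three thresholds in \eqref{ystar} together with $e^{-\theta t}\ge e^{-1}$ to get the preliminary lower bounds, apply the growth condition \eqref{eq1.4} on $S$, then invoke \eqref{q1} (resp. $2-\alpha>1+\delta$) with $y\ge 1$ to lower the $y$-exponent to $1+\delta$ and compare against \eqref{y1}. The only cosmetic difference is that the paper absorbs the factor $(1-\alpha)$ (resp. $(1-\beta)$) by the crude bound $e^{-\theta t}(1-\alpha)\le 1$ before factoring, whereas you keep it and absorb it at the end using $1-\alpha<e$; both are sound.
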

\begin{proof} 
According to the second restriction in (\ref{ystar}), we infer that $y(t) > \frac{1}{R^n}$, which implies $(0,\frac{1}{y(t)}) \subset (0,R^n)$.
Recalling (\ref{eq2.3}) and (\ref{phis})-(\ref{psit}), we see that
\begin{align}\label{eq3_10}
\mathcal{P}[\underline{U}, \underline{W}](s, t) &= \underline{U}_t - n^2 s^{2 - \frac{2}{n}} D(n\underline{U}_{s}) \underline{U}_{ss} - S\left(n \underline{U}_s\right) \cdot \Big(\underline{W} - \frac{\mu^{\star} s}{n}\Big)
\nonumber \\
&= -\theta \mathrm{e}^{-\theta t} l y^{1 - \alpha}(t) s +
\mathrm{e}^{-\theta t} l (1 - \alpha) y^{-\alpha}(t) y^{\prime}(t) s \nonumber \\
&\quad - S\left(n \mathrm{e}^{-\theta t} l y^{1 - \alpha}(t)\right) \Big(\mathrm{e}^{-\theta t} l y^{1 - \beta}(t) s - \frac{\mu^{\star} s}{n}\Big) \nonumber \\
&\leq \mathrm{e}^{-\theta t} l (1 - \alpha) y^{-\alpha}(t) y^{\prime}(t) s
- S\left(n \mathrm{e}^{-\theta t} l y^{1 - \alpha}(t)\right) \Big(\mathrm{e}^{-\theta t} l y^{1 - \beta}(t) s - \frac{\mu^{\star} s}{n}\Big)
\end{align}
for $t \in (0, T)$ and $s \in \left(0, \frac{1}{y(t)}\right)$. Due to $t\in (0,T) \cap (0,\frac{1}{\theta})$, we have
\begin{align}\label{thetat}
\theta  t<1,\quad t\in (0,T).
\end{align}
Combining this with the third restriction in \eqref{ystar}, we derive that
\begin{align*}
n \mathrm{e}^{-\theta t} l y^{1 - \alpha}(t) 
>n \mathrm{e}^{-1} l y_{0}^{1 - \alpha} 
>n \mathrm{e}^{-1} l y_{\star}^{1 - \alpha} 
>n \mathrm{e}^{-1} l \cdot \Big(\frac{\mathrm{e}}{nl}\Big) = 1.
\end{align*}
According to (\ref{thetat}) and the fourth restriction in \eqref{ystar}, we deduce that
\begin{align*}
\frac{\mathrm{e}^{-\theta t} l y^{1 - \beta}(t) s}{2}
> \frac{\mathrm{e}^{-1} l y_{0}^{1 - \beta} s}{2}  
> \frac{\mathrm{e}^{-1} l y_{\star}^{1 - \beta} s}{2} 
>\frac{\mu^{\star} s}{n}.
\end{align*}
We estimate the last term on the right-hand side of (\ref{eq3_10}) by using the above two estimates, along with \eqref{eq1.4} and (\ref{thetat}), we infer that
\begin{align}\label{eq3_11}
S\big(n \mathrm{e}^{-\theta t} l y^{1 - \alpha}(t)\big) \Big(\mathrm{e}^{-\theta t} l y^{1 - \beta}(t) s - \frac{\mu^{\star} s}{n}\Big) 
&\geq k_S \big(n \mathrm{e}^{-\theta t} l y^{1 - \alpha}(t)\big)^q \cdot \frac{\mathrm{e}^{-\theta t} l y^{1 - \beta}(t) s}{2} \nonumber \\
&\geq \frac{k_Sn^ql^{q+1}}{2\mathrm{e}^{|q|+1}} \cdot  y^{(1 - \alpha)q  - \beta+ 1}(t) s. 
\end{align}
Substituting \eqref{eq3_11} into \eqref{eq3_10}, along with $y(t) \geq y_0>1$ and (\ref{q1}), we obtain
\begin{align}
\mathcal{P}[\underline{U}, \underline{W}](s, t)
&\leq l y^{-\alpha}(t) y^{\prime}(t) s 
- \frac{k_Sn^ql^{q+1}}{2\mathrm{e}^{|q|+1}} \cdot  y^{(1 - \alpha)q  - \beta+ 1}(t) s \nonumber \\
&=  l y^{-\alpha}(t) s 
\Big(y^{\prime}(t)- \frac{k_Sn^ql^{q}}{2\mathrm{e}^{|q|+1}} y^{(1 - \alpha)q+\alpha - \beta+1}(t)\Big) \nonumber \\
&\leq l y^{-\alpha}(t) s 
\Big(y^{\prime}(t)- \frac{k_Sn^ql^{q}}{2\mathrm{e}^{|q|+1}} y^{1+\delta}(t)\Big),
\end{align}
which implies
\begin{align*}
\mathcal{P}[\underline{U}, \underline{W}](s, t) \leq 0,
\quad \text{ for all } \ t \in (0,T) \cap \left(0, \frac{1}{\theta}\right) \
\text{ and }\ s \in \left(0, \frac{1}{y(t)}\right).
\end{align*}
Similarly, applying the last restriction in (\ref{ystar}) and $2-\alpha>1+\frac{2}{n}>1+\delta$ due to (\ref{alpha}), we have
\begin{align*}
\mathcal{Q}[\underline{U}, \underline{W}](s, t)
=& \underline{W}_t - n^2 s^{2 - \frac{2}{n}} \underline{W}_{ss} - n \underline{W}_s \cdot \big(\underline{U} - \frac{\mu^{\star} s}{n}\big) \\
\leq & l y^{-\beta}(t) s 
\Big(y^{\prime}(t)- \frac{nl}{2\mathrm{e}^{2}} y^{2-\alpha}(t)\Big) \\
\leq & l y^{-\beta}(t) s 
\Big(y^{\prime}(t)- \frac{nl}{2\mathrm{e}^{2}} y^{1+\delta}(t)\Big),
\end{align*}
for all $t \in (0,T) \cap \left(0, \frac{1}{\theta}\right)$ and $s \in \left(0, \frac{1}{y(t)}\right)$. We complete our proof.
\end{proof}
For our choice of $\delta, \alpha, \beta$, we also have $\mathcal{P}[\underline{U}, \underline{W}](s, t) \leq 0$ and $\mathcal{Q}[\underline{U}, \underline{W}](s, t) \leq 0$ in an intermediate region (i.e. $\frac{1}{y(t)}<s \leq s_{\star}$).

\begin{lem}\label{lem3.3}
Let $\Omega = B_{R}(0) \subset \mathbb{R}^n$ $(n \geq 3)$. Assume that $D(s)$ and $S(s)$ satisfy $(\ref{eq1.2})$ as well as \eqref{eq1.3} and \eqref{eq1.4} with some $k_D,k_S > 0$ and $p,q\in \mathbb{R}$ fulfilling $(\ref{result-all})$. Suppose that $\delta$, $\alpha$ and $\beta$ are taken from Lemma~\ref{alphabeta}. There exists $s_{\star}=s_{\star}(\alpha,\beta,l,\mu^{\star},\delta)\in (0,R^n)$ small enough such that, if $T>0$ and $y(t) \in C^1([0, T))$ satisfy
\begin{align}\label{y2}
\left\{\begin{array}{l}
0 \leq y^{\prime}(t) \leq y^{1+\delta}(t), \quad t \in (0,T), \\
y(0) \geq y_0
\end{array}\right.
\end{align}
with 
\begin{align}\label{eq3_14}
{y_0}>\frac{1}{s_{\star}},
\end{align}
then, for any $\theta>0$, the functions $\underline{U}$ and $\underline{W}$, defined in $(\ref{eq3_7})$, have the following properties
\begin{align}\label{mid}
\mathcal{P}[\underline{U}, \underline{W}](s, t) \leq 0, \quad  \mathcal{Q}[\underline{U}, \underline{W}](s, t) \leq 0,
\end{align}
for all $t \in (0,T) \cap \left(0, \frac{1}{\theta}\right)$ and $s \in \left( \frac{1}{y(t)},s_{\star}\right]$.
\end{lem}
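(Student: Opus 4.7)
The plan is to expand $\mathcal{P}[\underline{U},\underline{W}]$ and $\mathcal{Q}[\underline{U},\underline{W}]$ pointwise on the region $\frac{1}{y(t)}<s\le s_\star$ using the second branches of \eqref{phi}--\eqref{psit}, and show that the (negative) chemotactic contribution outweighs both the (positive) diffusion term and the (positive) time-derivative term once $s_\star$ is small enough.

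Set $\sigma:=s-\frac{1-\alpha}{y(t)}$ and $\tilde\sigma:=s-\frac{1-\beta}{y(t)}$. Since $sy(t)\ge 1$, these satisfy the two-sided bounds $\alpha s\le\sigma\le s$ and $\beta s\le\tilde\sigma\le s$, and $\tau:=\mathrm{e}^{-\theta t}\in(\mathrm{e}^{-1},1]$ because $\theta t<1$. I will first choose $s_\star=s_\star(\alpha,\beta,l,\mu^\star,\delta)\in(0,R^n)$ small enough that throughout the region (i) $n\underline{U}_s\ge 1$ and $n\underline{W}_s\ge 1$ (possible since $\sigma^{\alpha-1},\tilde\sigma^{\beta-1}\to\infty$ as $s\to 0$), so that the growth bounds \eqref{eq1.3}--\eqref{eq1.4} can be invoked; and (ii) $\underline{U}\ge\frac{2\mu^\star s}{n}$ and $\underline{W}\ge\frac{2\mu^\star s}{n}$, so that $\underline{U}-\mu^\star s/n\ge\underline{U}/2$ and $\underline{W}-\mu^\star s/n\ge\underline{W}/2$ (here using $\underline{U}\gtrsim\tau l s^{\alpha}$ with $\alpha<1$).

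For $\mathcal{P}$: absorbing constants depending only on $\alpha,\beta,l,n,k_D,k_S,\mu^\star$, the explicit formulas yield a diffusion bound of order $\tau^{p+1}s^{2-2/n}\sigma^{(\alpha-1)(p+1)-1}$ and a chemotactic lower bound of order $\tau^{q+1}\sigma^{q(\alpha-1)}\tilde\sigma^{\beta}$. Replacing $\sigma$ and $\tilde\sigma$ by $s$ via $\alpha s\le\sigma\le s$, $\beta s\le\tilde\sigma\le s$ (choosing the appropriate endpoint according to the sign of each exponent), the ratio of diffusion to chemotaxis reduces to a bounded multiple of $\tau^{p-q}\cdot s^{(1-\alpha)(q-p)+\alpha-\beta-2/n}$. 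The $s$-exponent is strictly positive by \eqref{q3}, and $\tau^{p-q}$ is uniformly bounded on $\tau\in(\mathrm{e}^{-1},1]$, so diffusion is a small fraction of chemotaxis once $s_\star$ is small. The time-derivative part $\underline{U}_t\le\tau l\alpha^{1-\alpha}(1-\alpha)\sigma^{\alpha-1}y'(t)/y^2(t)$ is then absorbed using the hypothesis $y'\le y^{1+\delta}$, which gives $y'/y^2\le y^{\delta-1}\le s_\star^{1-\delta}$ (because $y\ge 1/s_\star$ by \eqref{eq3_14} and $\delta-1<0$); combining this small factor with the chemotactic lower bound yields a quotient with a strictly positive $s_\star$-power, hence arbitrarily small.

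For $\mathcal{Q}$: the analogous computation (no $D$ factor) gives diffusion $\lesssim\tau s^{2-2/n}\tilde\sigma^{\beta-2}$ and chemotaxis $\gtrsim\tau^2\tilde\sigma^{\beta-1}\sigma^{\alpha}$, whose ratio reduces to $\tau^{-1}s^{1-\alpha-2/n}$; the $s$-exponent is positive by the first restriction $\alpha<1-2/n$ in \eqref{alpha}, so $\mathcal{Q}[\underline{U},\underline{W}]\le 0$ follows by the same scheme, with the $\underline{W}_t$-term absorbed exactly as above. The main technical obstacle is the bookkeeping: each exponent on $\sigma$ and $\tilde\sigma$ carries a sign that must be tracked in advance to pick the correct endpoint from $[\alpha s,s]$ or $[\beta s,s]$, and one must verify that all smallness thresholds for $s_\star$ can be absorbed into a single choice depending only on $(\alpha,\beta,l,\mu^\star,\delta)$, uniformly in $t\in(0,T)\cap(0,1/\theta)$ and in the particular admissible $y$.
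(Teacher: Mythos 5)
Your overall scheme matches the paper's: expand $\mathcal{P}$ and $\mathcal{Q}$ on the second branch, use $\alpha s\le\sigma\le s$, $\beta s\le\tilde\sigma\le s$, $\mathrm{e}^{-1}<\tau\le 1$, and show that the chemotaxis term dominates both the diffusion term (via \eqref{q3} resp.\ $\alpha<1-\frac{2}{n}$) and the $\underline{U}_t,\underline{W}_t$ term (via \eqref{q1} resp.\ $\alpha+\delta<1$). The diffusion-versus-chemotaxis comparison is sound and essentially identical to the paper's.

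However, the absorption of the time-derivative term has a genuine gap. You bound $y'/y^2\le y^{\delta-1}\le s_\star^{1-\delta}$ by invoking $y\ge 1/s_\star$ from \eqref{eq3_14}. This decouples the $y$-power from $s$, and the quotient of the time-derivative against the chemotactic lower bound then reduces, after your $\sigma,\tilde\sigma\mapsto s$ replacement, to a constant multiple of $\tau^{-q}\,s^{(\alpha-1)(1-q)-\beta}\,s_\star^{1-\delta}$. For $q\le 1$ (permitted under \eqref{result-all}, which only requires $q>1-\frac{n}{2}$) the exponent $(\alpha-1)(1-q)-\beta$ is negative, so this factor is \emph{unbounded} as $s\searrow 1/y(t)$ once $y(t)$ is large — no choice of $s_\star$ makes it uniformly small on the region. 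The repair is to keep the $y$-dependence tied to $s$: since on this region $s>1/y(t)$, one has the sharper bound $y^{\delta-1}<s^{1-\delta}$, or equivalently (the paper's choice, cf.\ \eqref{s5}) $y^{\delta-1}<\alpha^{\delta-1}(s-\frac{1-\alpha}{y(t)})^{1-\delta}$ from $\sigma>\alpha/y$. With that replacement the time/chemotaxis quotient becomes $\approx\sigma^{(1-\alpha)q+\alpha-\beta-\delta}$, whose exponent is strictly positive by \eqref{q1}, and is then controlled by $s_\star^{(1-\alpha)q+\alpha-\beta-\delta}\to 0$. The same correction is needed in your $\mathcal{Q}$-estimate, where the analogous quotient $\sigma^{-\alpha}s_\star^{1-\delta}$ is unbounded but $\tilde\sigma^{1-\delta}\sigma^{-\alpha}\lesssim s^{1-\alpha-\delta}$ closes the argument via $\alpha+\delta<1$.
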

\begin{proof}
According to \eqref{eq3_14}, we observe that $\frac{1}{y(t)}<\frac{1}{y_0}<s_{\star}$, which ensures that the set $\big(\frac{1}{y(t)},s_{\star}\big]$ is non-empty. Let 
\begin{align}\label{c3}
c_3=\frac{4k_D \mathrm{e}^{|q|+|p|+1}}{c_2^{\beta}k_S}n^{2+p-q}l^{p-q}\alpha^{\frac{2}{n}-1-\alpha+(1-\alpha)(p-q)}\beta^{\beta},\
c_4:=\frac{4\mathrm{e}^{|q|+1}}{c_2^{\beta}k_Sn^{q}}l^{-q}\alpha^{(\alpha-1)q+\delta-\alpha}\beta^{\beta}
\end{align} 
where $\alpha,\beta \in (0,1)$ are taken from Lemma~\ref{alphabeta}. Due to (\ref{alpha})-(\ref{q3}), we can choose $s_{\star}>0$ suitably small such that
\begin{align}\label{sstar1}
{s_{\star}}^{1-\beta} \leq \frac{n\beta^{1-\beta}l}{2\mu^{\star}\mathrm{e}},
\quad{s_{\star}}^{1-\alpha}\leq\frac{n\alpha^{1-\alpha}l}{\mathrm{e}}
\end{align}
and
\begin{align}\label{sstar4}
c_3-s_{\star}^{-\left((1-\alpha)(q-p)+\alpha-\beta-\frac{2}{n}\right)}
\leq 0,
\quad c_4
-s_{\star}^{-\left((1-\alpha)q+\alpha-\beta-\delta\right)}
 \leq 0,
\end{align}
as well as
\begin{align}\label{Q}
{s_{\star}}^{1-\alpha} \leq \frac{n\alpha^{1-\alpha}l}{2\mu^{\star}\mathrm{e}},
\quad
 \frac{4nc_1^{\alpha}\mathrm{e}^{2}\alpha^{\alpha}}{l\beta^{2-\frac{2}{n}} }-s_{\star}^{-(1-\alpha-\frac{2}{n})} \leq 0,
\quad 
\frac{4c_1^{\alpha}\mathrm{e}^{2}\alpha^{\alpha}}{nl}\beta^{\delta-1}-s_{\star}^{-(1-\delta-\alpha)} \leq 0
\end{align}
According to (\ref{eq2.3}) and (\ref{phis})-(\ref{psit}), along with (\ref{thetat}), by a direct computation, we have
\begin{align*}
\begin{aligned}
\mathcal{P}[\underline{U}, \underline{W}](s, t)
&=\underline{U}_t-n^2 s^{2-\frac{2}{n}} D(n\underline{U}_s) \underline{U}_{s s}
-S\left(n \underline{U}_s\right) \cdot\Big(\underline{W}-\frac{\mu^{\star} s}{n}\Big)\\
&=-\theta \mathrm{e}^{-\theta t}\alpha^{-\alpha}l\Big(s-\frac{1-\alpha}{y(t)}\Big)^\alpha
+\mathrm{e}^{-\theta t}\alpha^{1-\alpha} l(1-\alpha)\Big(s-\frac{1-\alpha}{y(t)}\Big)^{\alpha-1}\frac{y^{\prime}(t)}{y^2(t)}\\
&\quad +n^2s^{2-\frac{2}{n}}D\Big(n\mathrm{e}^{-\theta t}\alpha^{1-\alpha} l \Big(s-\frac{1-\alpha}{y(t)}\Big)^{\alpha-1}\Big)\mathrm{e}^{-\theta t}l\alpha^{1-\alpha}(1-\alpha)\Big(s-\frac{1-\alpha}{y(t)}\Big)^{\alpha-2}\\
&\quad -S\left(n\mathrm{e}^{-\theta t}\alpha^{1-\alpha}l\Big(s-\frac{1-\alpha}{y(t)}\Big)^{\alpha-1}\right)
\left(\mathrm{e}^{-\theta t}\beta^{-\beta}l\Big(s-\frac{1-\beta}{y(t)}\Big)^\beta-\frac{\mu^{\star}s}{n}\right)\\
&\leq \alpha^{1-\alpha} l\Big(s-\frac{1-\alpha}{y(t)}\Big)^{\alpha-1}\cdot y^{\delta-1}(t)\\
&+n^2s^{2-\frac{2}{n}}\alpha^{1-\alpha}l \Big(s-\frac{1-\alpha}{y(t)}\Big)^{\alpha-2}D\Big(n\mathrm{e}^{-\theta t}\alpha^{1-\alpha} l \Big(s-\frac{1-\alpha}{y(t)}\Big)^{\alpha-1}\Big)\\
&-S\left(n\mathrm{e}^{-\theta t}\alpha^{1-\alpha}l\Big(s-\frac{1-\alpha}{y(t)}\Big)^{\alpha-1}\right)
\left(\mathrm{e}^{-\theta t}\beta^{-\beta}l\Big(s-\frac{1-\beta}{y(t)}\Big)^\beta-\frac{\mu^{\star}s}{n}\right)
\end{aligned}
\end{align*}
for all $t \in (0,T) \cap \left(0, \frac{1}{\theta}\right)$ and $s \in \left( \frac{1}{y(t)},s_{\star}\right]$. By (\ref{thetat}) and the second restriction in (\ref{sstar1}), we derive that
\begin{align}\label{eq3_17}
n\mathrm{e}^{-\theta t}\alpha^{1-\alpha}l\Big(s-\frac{1-\alpha}{y(t)}\Big)^{\alpha-1}
>\frac{n\alpha^{1-\alpha}l}{ \mathrm{e}} s^{\alpha-1}
\geq\frac{n\alpha^{1-\alpha}l}{ \mathrm{e}} s_{\star}^{\alpha-1}\geq 1.
\end{align}
For $\alpha,\beta \in (0,1)$ taken from Lemma~\ref{alphabeta}, using $s>\frac{1}{y(t)}$, we deduce that
\begin{align*}
\frac{\alpha}{y(t)}<s-\frac{1-\alpha}{y(t)}
, \quad \frac{\beta}{y(t)}<s-\frac{1-\beta}{y(t)}
\end{align*}
and
\begin{align}\label{s1}
\alpha s<s-\frac{1-\alpha}{y(t)},
\quad \beta s<s-\frac{1-\beta}{y(t)}.
\end{align}
Thus, we infer that
\begin{align}\label{s5}
y^{\delta-1}(t)<{\alpha}^{\delta-1}\Big(s-\frac{1-\alpha}{y(t)}\Big)^{1-\delta},
\quad y^{\delta-1}(t)<{\beta}^{\delta-1}\Big(s-\frac{1-\beta}{y(t)}\Big)^{1-\delta}
\end{align}
and
\begin{align}\label{s4}
s^{2-\frac{2}{n}}<\alpha^{\frac{2}{n}-2}{\Big(s-\frac{1-\alpha}{y(t)}\Big)}^{2-\frac{2}{n}},
\quad s^{2-\frac{2}{n}}<\beta^{\frac{2}{n}-2}{\Big(s-\frac{1-\beta}{y(t)}\Big)}^{2-\frac{2}{n}}.
\end{align}
Thus, combining (\ref{eq3_17}) with (\ref{eq1.3}) and (\ref{eq1.4}), applying the first estimates in (\ref{s5}) and (\ref{s4}), utilizing the second estimate in (\ref{s1}), we obtain
\begin{align}\label{eq3_15}
&\mathcal{P}[\underline{U}, \underline{W}](s, t)\nonumber\\
\leq& \alpha^{1-\alpha} l\Big(s-\frac{1-\alpha}{y(t)}\Big)^{\alpha-1}\cdot {\alpha}^{\delta-1}\Big(s-\frac{1-\alpha}{y(t)}\Big)^{1-\delta} \nonumber \\
&+n^2\alpha^{\frac{2}{n}-2}{\Big(s-\frac{1-\alpha}{y(t)}\Big)}^{2-\frac{2}{n}}
\alpha^{1-\alpha}l \Big(s-\frac{1-\alpha}{y(t)}\Big)^{\alpha-2} 
k_D\left(n\mathrm{e}^{-\theta t}\alpha^{1-\alpha}l\Big(s-\frac{1-\alpha}{y(t)}\Big)^{\alpha-1}\right)^p 
 \nonumber \\
& -k_S\left(n\mathrm{e}^{-\theta t}\alpha^{1-\alpha}l\Big(s-\frac{1-\alpha}{y(t)}\Big)^{\alpha-1}\right)^q \left(\mathrm{e}^{-1}\beta^{-\beta}l\Big(s-\frac{1-\beta}{y(t)}\Big)^\beta
-\frac{\mu^{\star}}{n\beta}\Big(s-\frac{1-\beta}{y(t)}\Big)\right) \notag \\
= & \alpha^{\delta-\alpha} l\Big(s-\frac{1-\alpha}{y(t)}\Big)^{\alpha-\delta}  
+k_Dn^{2+p} l^{p+1}\alpha^{\frac{2}{n}-1-\alpha+(1-\alpha)p} \Big(s-\frac{1-\alpha}{y(t)}\Big)^{\alpha-\frac{2}{n}+p(\alpha-1)} (\mathrm{e}^{-\theta t})^p\notag \\
&-k_S\left(n\mathrm{e}^{-\theta t}\alpha^{1-\alpha}l\Big(s-\frac{1-\alpha}{y(t)}\Big)^{\alpha-1}\right)^q \left(\mathrm{e}^{-1}\beta^{-\beta}l\Big(s-\frac{1-\beta}{y(t)}\Big)^\beta
-\frac{\mu^{\star}}{n\beta}\Big(s-\frac{1-\beta}{y(t)}\Big)\right).
\end{align}

We first estimate the last term on the right-hand side of (\ref{eq3_15}). For  $\alpha,\beta \in (0,1)$ taken from Lemma~\ref{alphabeta}, we first prove that
\begin{align}\label{alphabeta-1}
c_1\Big(s-\frac{1-\alpha}{y(t)}\Big) 
\geq s-\frac{1-\beta}{y(t)}
\geq c_2\Big(s-\frac{1-\alpha}{y(t)}\Big),
\end{align}
where $c_1=\max\{\frac{\beta}{\alpha},1\}$ and $c_2=\min\{\frac{\beta}{\alpha},1\}$.
For the case $\alpha \leq \beta$, we have 
\begin{align*}
s-\frac{1-\beta}{y(t)} \geq s-\frac{1-\alpha}{y(t)} \quad \text{and} \quad
s-\frac{1-\alpha}{y(t)} \geq \frac{\alpha}{\beta} \cdot \Big(s-\frac{1-\beta}{y(t)}\Big).
\end{align*}
For the case $\alpha > \beta$, similarly, we know that
\begin{align*}
s-\frac{1-\alpha}{y(t)} \geq s-\frac{1-\beta}{y(t)} \quad \text{and} \quad
s-\frac{1-\beta}{y(t)} \geq \frac{\beta}{\alpha} \cdot \Big(s-\frac{1-\alpha}{y(t)}\Big).
\end{align*}

It follows from (\ref{alphabeta-1}) and the first restriction in (\ref{sstar1}) that
\begin{align*}
&\mathrm{e}^{-1}\beta^{-\beta}l\Big(s-\frac{1-\beta}{y(t)}\Big)^\beta-\frac{\mu^{\star}}{n\beta}\Big(s-\frac{1-\beta}{y(t)}\Big) \nonumber\\
=&\frac{\beta^{-\beta}l}{2\mathrm{e}}\Big(s-\frac{1-\beta}{y(t)}\Big)^\beta
+\Big(s-\frac{1-\beta}{y(t)}\Big)^\beta
\left(\frac{\beta^{-\beta}l}{2\mathrm{e}}-\frac{\mu^{\star}}{n\beta}\Big(s-\frac{1-\beta}{y(t)}\Big)^{1-\beta}\right)\nonumber\\
\geq&\frac{\beta^{-\beta}l}{2\mathrm{e}}\Big(s-\frac{1-\beta}{y(t)}\Big)^\beta
+ \Big(s-\frac{1-\beta}{y(t)}\Big)^\beta
\Big(\frac{\beta^{-\beta}l}{2\mathrm{e}}-\frac{\mu^{\star}}{n\beta}{s}^{1-\beta}\Big)\nonumber\\
\geq &\frac{\beta^{-\beta}l}{2\mathrm{e}}\Big(s-\frac{1-\beta}{y(t)}\Big)^\beta
+\Big(s-\frac{1-\beta}{y(t)}\Big)^\beta
\Big(\frac{\beta^{-\beta}l}{2\mathrm{e}}-\frac{\mu^{\star}}{n\beta}{s_{\star}}^{1-\beta}\Big)\nonumber\\
\geq& \frac{\beta^{-\beta}l}{2\mathrm{e}}\Big(s-\frac{1-\beta}{y(t)}\Big)^\beta 
\geq \frac{c_2^{\beta}\beta^{-\beta}l}{2\mathrm{e}}\Big(s-\frac{1-\alpha}{y(t)}\Big)^\beta .
\end{align*}
Inserting this into \eqref{eq3_15}, along with (\ref{thetat}), we deduce that
\begin{align}\label{p-1}
\mathcal{P}[\underline{U}, \underline{W}](s, t) 
\leq& \alpha^{\delta-\alpha} l\Big(s-\frac{1-\alpha}{y(t)}\Big)^{\alpha-\delta}  
+k_Dn^{2+p} \mathrm{e}^{|p|} l^{p+1}\alpha^{\frac{2}{n}-1-\alpha+(1-\alpha)p} \Big(s-\frac{1-\alpha}{y(t)}\Big)^{\alpha-\frac{2}{n}+p(\alpha-1)}  \notag \\
&-\frac{c_2^{\beta}k_Sn^ql^{q+1}}{2\mathrm{e}^{|q|+1}\beta^{\beta}}\alpha^{(1-\alpha)q}
\Big(s-\frac{1-\alpha}{y(t)}\Big)^{\beta+q(\alpha-1)}.
\end{align}
Finally, we prove $\mathcal{P}[\underline{U}, \underline{W}](s, t) \leq 0$ by making use of the last term on the right-hand side of (\ref{p-1}). By (\ref{q3}), the definition of $c_3$ in (\ref{c3}) and the first restriction in (\ref{sstar4}), after a direct calculation, we derive that
\begin{align}
&k_Dn^{2+p} \mathrm{e}^{|p|} l^{p+1}\alpha^{\frac{2}{n}-1-\alpha+(1-\alpha)p} \Big(s-\frac{1-\alpha}{y(t)}\Big)^{\alpha-\frac{2}{n}+p(\alpha-1)}
-\frac{c_2^{\beta}k_Sn^ql^{q+1}}{4\mathrm{e}^{|q|+1}\beta^{\beta}}\alpha^{(1-\alpha)q}\Big(s-\frac{1-\alpha}{y(t)}\Big)^{\beta+q(\alpha-1)} \notag \\
=&\frac{c_2^{\beta}k_Sn^ql^{q+1}}{4\mathrm{e}^{|q|+1}\beta^{\beta}}\alpha^{(1-\alpha)q}
\Big(s-\frac{1-\alpha}{y(t)}\Big)^{\alpha-\frac{2}{n}+p(\alpha-1)}
\left(c_3-\Big(s-\frac{1-\alpha}{y(t)}\Big)^{-\left((1-\alpha)(q-p)+\alpha-\beta-\frac{2}{n}\right)}\right)
\notag \\
\leq & \frac{c_2^{\beta}k_Sn^ql^{q+1}}{4\mathrm{e}^{|q|+1}\beta^{\beta}}\alpha^{(1-\alpha)q}
\Big(s-\frac{1-\alpha}{y(t)}\Big)^{\alpha-\frac{2}{n}+p(\alpha-1)}
\left(c_3-s^{-\left((1-\alpha)(q-p)+\alpha-\beta-\frac{2}{n}\right)}\right)\notag \\
\leq & \frac{c_2^{\beta}k_Sn^ql^{q+1}}{4\mathrm{e}^{|q|+1}\beta^{\beta}}\alpha^{(1-\alpha)q}
\Big(s-\frac{1-\alpha}{y(t)}\Big)^{\alpha-\frac{2}{n}+p(\alpha-1)}
\left(c_3-s_{\star}^{-\left((1-\alpha)(q-p)+\alpha-\beta-\frac{2}{n}\right)}\right)
\leq 0.
\end{align}
Utilizing the definition of $c_4$ in (\ref{c3}) and the second restriction in (\ref{sstar4}), together with (\ref{q1}), we get
\begin{align}\label{p-2}
&\alpha^{\delta-\alpha} l\Big(s-\frac{1-\alpha}{y(t)}\Big)^{\alpha-\delta}  
-\frac{c_2^{\beta}k_Sn^ql^{q+1}}{4\mathrm{e}^{|q|+1}\beta^{\beta}}\alpha^{(1-\alpha)q}
\Big(s-\frac{1-\alpha}{y(t)}\Big)^{\beta+q(\alpha-1)} \notag \\
\leq&\frac{c_2^{\beta}k_Sn^ql^{q+1}}{4\mathrm{e}^{|q|+1}\beta^{\beta}}\alpha^{(1-\alpha)q}
\Big(s-\frac{1-\alpha}{y(t)}\Big)^{\alpha-\delta}
\left(c_4
-\Big(s-\frac{1-\alpha}{y(t)}\Big)^{-\left((1-\alpha)q+\alpha-\beta-\delta\right)}\right) \notag \\
\leq& \frac{c_2^{\beta}k_Sn^ql^{q+1}}{4\mathrm{e}^{|q|+1}\beta^{\beta}}\alpha^{(1-\alpha)q}
\Big(s-\frac{1-\alpha}{y(t)}\Big)^{\alpha-\delta}
\left(c_4
-s^{-\left((1-\alpha)q+\alpha-\beta-\delta\right)}\right) \notag \\
\leq& \frac{c_2^{\beta}k_Sn^ql^{q+1}}{4\mathrm{e}^{|q|+1}\beta^{\beta}}\alpha^{(1-\alpha)q}
\Big(s-\frac{1-\alpha}{y(t)}\Big)^{\alpha-\delta}
\left(c_4
-s_{\star}^{-\left((1-\alpha)q+\alpha-\beta-\delta\right)}\right) 
\leq 0.
\end{align}
Combining (\ref{p-1})-(\ref{p-2}), we have
\begin{align}\label{p-mid}
\mathcal{P}[\underline{U}, \underline{W}](s, t) 
\leq0,
\end{align}
for all $t \in (0,T) \cap \left(0, \frac{1}{\theta}\right)$ and $s \in \left(\frac{1}{y(t)},s_{\star}\right]$. Similar to (\ref{p-mid}), using the conditions of $s_{\star}$ in (\ref{Q}) and the second estimates in (\ref{s5}) and (\ref{s4}), along with $1-\delta-\alpha>1-\alpha-\frac{2}{n}>0$, which are derived from (\ref{alpha}), we can deduce that
\begin{align}\label{q-mid}
\mathcal{Q}[\underline{U}, \underline{W}](s, t) 
=& \underline{W}_t - n^2 s^{2 - \frac{2}{n}} \underline{W}_{ss} - n \underline{W}_s \cdot \big(\underline{U} - \frac{\mu^{\star} s}{n}\big) \nonumber \\
\leq & \beta^{\delta-\beta} l\Big(s-\frac{1-\beta}{y(t)}\Big)^{\beta-\delta}  
+n^{2}  l\beta^{\frac{2}{n}-1-\beta} \Big(s-\frac{1-\beta}{y(t)}\Big)^{\beta-\frac{2}{n}}  \notag \\
&-\frac{nl^{2}}{2c_1^{\alpha}\mathrm{e}^{2}\alpha^{\alpha}}\beta^{1-\beta}
\Big(s-\frac{1-\beta}{y(t)}\Big)^{\alpha+\beta-1} \nonumber \\
=& \frac{nl^{2}}{4c_1^{\alpha}\mathrm{e}^{2}\alpha^{\alpha}}\beta^{1-\beta} \Big(s-\frac{1-\beta}{y(t)}\Big)^{\beta-\delta}  
\left(\frac{4c_1^{\alpha}\mathrm{e}^{2}\alpha^{\alpha}}{nl}\beta^{\delta-1}-\Big(s-\frac{1-\beta}{y(t)}\Big)^{-(1-\delta-\alpha)}\right) \nonumber \\
&+\frac{nl^{2}}{4c_1^{\alpha}\mathrm{e}^{2}\alpha^{\alpha}}\beta^{1-\beta} \Big(s-\frac{1-\beta}{y(t)}\Big)^{\beta-\frac{2}{n}}  
\left(\frac{4nc_1^{\alpha}\mathrm{e}^{2}\alpha^{\alpha}}{l\beta^{2-\frac{2}{n}} }-\Big(s-\frac{1-\beta}{y(t)}\Big)^{-(1-\alpha-\frac{2}{n})}\right) \nonumber \\
\leq& \frac{nl^{2}}{4c_1^{\alpha}\mathrm{e}^{2}\alpha^{\alpha}}\beta^{1-\beta} \Big(s-\frac{1-\beta}{y(t)}\Big)^{\beta-\delta}  
\left(\frac{4c_1^{\alpha}\mathrm{e}^{2}\alpha^{\alpha}}{nl}\beta^{\delta-1}-s_{\star}^{-(1-\delta-\alpha)}\right) \nonumber \\
&+\frac{nl^{2}}{4c_1^{\alpha}\mathrm{e}^{2}\alpha^{\alpha}}\beta^{1-\beta} \Big(s-\frac{1-\beta}{y(t)}\Big)^{\beta-\frac{2}{n}}  
\left(\frac{4nc_1^{\alpha}\mathrm{e}^{2}\alpha^{\alpha}}{l\beta^{2-\frac{2}{n}} }-s_{\star}^{-(1-\alpha-\frac{2}{n})}\right) \nonumber \\
\leq&0,
\end{align}
for all $t \in (0,T) \cap \left(0, \frac{1}{\theta}\right)$ and $s \in \left(\frac{1}{y(t)},s_{\star}\right]$. Therefore, (\ref{mid}) can be obtained by (\ref{p-mid}) and (\ref{q-mid}). We complete our proof.
\end{proof}
By choosing $\theta>1$ large enough, we prove $\mathcal{P}[\underline{U}, \underline{W}](s, t) \leq 0$ and $\mathcal{Q}[\underline{U}, \underline{W}](s, t) \leq 0$ in the very outer part $s \in  \left(s_{\star}, R^n\right] $.

\begin{lem}\label{lem3.4}
Let $\Omega = B_{R}(0) \subset \mathbb{R}^n$ $(n \geq 3)$.  Assume that $D(s)$ and $S(s)$ satisfy $(\ref{eq1.2})$ as well as \eqref{eq1.3} and \eqref{eq1.4} with some $k_D,k_S > 0$ and $p,q\in \mathbb{R}$ fulfilling $(\ref{result-all})$. Suppose that $\delta$, $\alpha$ and $\beta$ are taken from Lemma~\ref{alphabeta}.
For given $s_{\star}$ taken from Lemma~\ref{lem3.3}, one can find $\theta_{\star}=\theta_{\star}(\alpha,\beta,l,\mu^{\star},s_{\star},\delta)>0$ such that,
if $\theta \geq \theta_{\star}$, $T>0$ and $y(t) \in C^1([0, T))$ satisfy
\begin{align}\label{y2-1}
\left\{\begin{array}{l}
0 \leq y^{\prime}(t) \leq y^{1+\delta}(t), \quad t \in (0,T), \\
y(0) \geq y_0
\end{array}\right.
\end{align}
with 
\begin{align}\label{eq3_14-1}
{y_0}>\frac{1}{s_{\star}},
\end{align}
then the functions $\underline{U}$ and $\underline{W}$, defined in $(\ref{eq3_7})$, have the following properties
\begin{align*}
\mathcal{P}[\underline{U}, \underline{W}](s, t) \leq 0, \quad \mathcal{Q}[\underline{U}, \underline{W}](s, t) \leq 0,
\end{align*}
for all $t \in (0,T) \cap \left(0, \frac{1}{\theta}\right)$ and $s \in \left(s_{\star}, R^n\right)$.
\end{lem}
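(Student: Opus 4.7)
The plan is to exploit the fact that the outer region $s \in (s_\star, R^n)$ is bounded away from the cusp $s = 1/y(t)$, so every spatial factor in $\Phi, \Psi$ and their derivatives is bounded both above and (when needed) below by positive constants that are independent of $\theta$. In this regime the killing term $-\theta \underline{U}$ hidden in $\underline{U}_t$ (and its analogue for $\underline{W}$) can be made to dominate all remaining contributions simply by enlarging $\theta$.

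First I would record the key \emph{a priori} bounds. Since $y$ is nondecreasing by \eqref{y2-1} and $y(0) \geq y_0 > 1/s_\star$, one has $1/y(t) \leq 1/y_0 < s_\star$, so that for every $s \in (s_\star, R^n)$,
\begin{align*}
0 < s_\star - \tfrac{1}{y_0} \leq s - \tfrac{1-\alpha}{y(t)} \leq R^n, \qquad 0 < s_\star - \tfrac{1}{y_0} \leq s - \tfrac{1-\beta}{y(t)} \leq R^n.
\end{align*}
Combined with the restriction $t \in (0, 1/\theta)$, which forces $e^{-\theta t} \in [e^{-1}, 1]$ \emph{independently of $\theta$}, this shows that $n\underline{U}_s$ and $n\underline{W}_s$ take values in compact subsets of $(0,\infty)$ that do not depend on $\theta$. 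The continuity requirement in \eqref{eq1.2} therefore confines $D(n\underline{U}_s)$ and $S(n\underline{U}_s)$ to a bounded interval independent of $\theta$.

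Next I would check that every term in $\mathcal{P}[\underline{U}, \underline{W}]$ other than $-\theta e^{-\theta t} \Phi$ is bounded above by a constant independent of $\theta$. Indeed, $\Phi$, $\Psi$, $|\Phi_{ss}|$, $|\Psi_{ss}|$ are uniformly bounded by the range estimate above; the time derivative obeys $|e^{-\theta t}\Phi_t| \leq l\alpha^{1-\alpha}(1-\alpha)(s_\star - 1/y_0)^{\alpha-1} y^{\delta-1}(t) \leq l\alpha^{1-\alpha}(1-\alpha)(s_\star - 1/y_0)^{\alpha-1} y_0^{\delta-1}$, since $y'/y^2 \leq y^{\delta-1}$ is nonincreasing in $t$ (using $\delta < 1$ from \eqref{alpha} and $y$ nondecreasing); and the chemotaxis contribution is controlled because $|\underline{W}(s,t) - \mu^\star s/n|$ is uniformly bounded on the outer region. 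Using the lower bound $\Phi(s,t) \geq l\alpha^{-\alpha}(s_\star - 1/y_0)^\alpha$, these pieces assemble to
\begin{align*}
\mathcal{P}[\underline{U}, \underline{W}](s,t) \leq -\theta\, e^{-1} l\alpha^{-\alpha}\!\left(s_\star - \tfrac{1}{y_0}\right)^{\alpha} + C_{\mathcal{P}},
\end{align*}
where $C_{\mathcal{P}}$ depends only on $\alpha, \beta, l, \mu^\star, s_\star, \delta, k_D, k_S, R$. Running the same analysis on $\mathcal{Q}[\underline{U}, \underline{W}]$ (using the lower bound $\Psi(s,t) \geq l\beta^{-\beta}(s_\star - 1/y_0)^\beta$) yields an analogous estimate with some constant $C_{\mathcal{Q}}$ independent of $\theta$.

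Defining
\begin{align*}
\theta_\star := e \cdot \max\!\left\{\, \frac{C_{\mathcal{P}}}{l\alpha^{-\alpha}(s_\star - 1/y_0)^\alpha},\; \frac{C_{\mathcal{Q}}}{l\beta^{-\beta}(s_\star - 1/y_0)^\beta}\,\right\}
\end{align*}
then forces $\mathcal{P}[\underline{U}, \underline{W}] \leq 0$ and $\mathcal{Q}[\underline{U}, \underline{W}] \leq 0$ on $(s_\star, R^n)$ for every $\theta \geq \theta_\star$. The only point requiring genuine care is the bookkeeping that isolates $\theta$ from the other parameters — in particular, making sure the interval $[e^{-1},1]$ for $e^{-\theta t}$ on $t \in (0, 1/\theta)$ is used systematically, so that none of $C_{\mathcal{P}}, C_{\mathcal{Q}}$ or the lower bounds on $\Phi, \Psi$ secretly depends on $\theta$. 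Beyond that, the argument is a straightforward consequence of the uniform a priori bounds available in the outer region.
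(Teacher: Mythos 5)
Your overall strategy is exactly the paper's: on the outer region the spatial factors $s-\frac{1-\alpha}{y(t)}$ and $s-\frac{1-\beta}{y(t)}$ are pinched into a compact subinterval of $(0,R^n)$, $e^{-\theta t}\in[e^{-1},1]$ on $t\in(0,1/\theta)$, so $n\underline{U}_s$ and $n\underline{W}_s$ stay in a fixed compact set where $D$ and $S$ are bounded, and enlarging $\theta$ lets the damping term $-\theta e^{-\theta t}\Phi$ (resp.\ $-\theta e^{-\theta t}\Psi$) absorb everything else.

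There is, however, a bookkeeping flaw that contradicts the precise statement you are proving. You bound
\begin{align*}
s-\tfrac{1-\alpha}{y(t)} \;\geq\; s_\star - \tfrac{1}{y_0},
\end{align*}
and this quantity appears both in the lower bound for $\Phi$ and in the denominator of your $\theta_\star$. Although you then assert that $C_{\mathcal{P}}$ and $\theta_\star$ ``depend only on $\alpha,\beta,l,\mu^\star,s_\star,\delta,k_D,k_S,R$,'' the constants you have actually derived depend on $y_0$ and blow up as $y_0\searrow 1/s_\star$. The lemma explicitly requires $\theta_\star=\theta_\star(\alpha,\beta,l,\mu^\star,s_\star,\delta)$, chosen before $y_0$, so this does not prove the statement as stated. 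The fix is the same trick the paper uses: since $1/y(t)<s_\star$, one has $\frac{1-\alpha}{y(t)}<(1-\alpha)s_\star$, hence $s-\frac{1-\alpha}{y(t)}>\alpha s_\star$ (and analogously $>\beta s_\star$), which is a lower bound independent of $y_0$. Replacing $s_\star-1/y_0$ by $\alpha s_\star$ (resp.\ $\beta s_\star$) throughout makes your $\theta_\star$ genuinely $y_0$-independent and closes the gap; with that change your proof coincides with the paper's.
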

\begin{proof}
Due to (\ref{eq3_14-1}) and $y^{\prime}(t) \geq 0$, we have 
\begin{align}\label{s7}
R^{n}>s-\frac{1-\alpha}{y(t)}>s_{\star}-\frac{1-\alpha}{y(t)}>\alpha s_{\star} \ \text{ and } \
 R^{n}>s-\frac{1-\beta}{y(t)}>s_{\star}-\frac{1-\beta}{y(t)}>\beta s_{\star}.
\end{align}
Thus, using (\ref{thetat}), we have 
\begin{align}\label{s10}
n \mathrm{e}^{-\theta t}l \alpha^{1-\alpha}\Big(s-\frac{1-\alpha}{y(t)}\Big) ^{\alpha-1}
\in \left[nl \mathrm{e}^{-1} \alpha^{1 - \alpha} R^{n(\alpha - 1)}, nl s^{\alpha - 1}_{\star} \right].
\end{align}
Therefore, based on the continuity of $D$ and $S$, we define
\begin{align*}
	D_{\max} := \max\left\{ D(x) \mid x \in \left[nl\mathrm{e}^{-1} \alpha^{1 - \alpha} R^{n(\alpha - 1)}, nl   s^{\alpha - 1}_{\star} \right]\right\}
\end{align*}
and
\begin{align*}
	S_{\max} := \max\left\{ S(x) \mid x \in \left[nl\mathrm{e}^{-1} \alpha^{1 - \alpha} R^{n(\alpha - 1)}, nl   s^{\alpha - 1}_{\star} \right]\right\}.
\end{align*} 
Selecting $\theta_{\star}$ large enough to satisfy
\begin{align}\label{eq3_18}
	\theta_{\star} \cdot \frac{l {s_{\star}}^\alpha}{\mathrm{e}}
\geq ls^{\alpha-\delta}_{\star} + \frac{n^2 R^{2n - 2} l  {s_{\star}}^{\alpha - 2} D_{\max} }{\alpha}  + \frac{\mu^{\star} R^n{S_{\max}}}{n}
\end{align}
and 
\begin{align}\label{eq3_18-1}
	\theta_{\star} \cdot \frac{l {s_{\star}}^\beta}{\mathrm{e}} \geq l s^{\beta-\delta}_{\star}+ \frac{n^2 R^{2n - 2}l {s_{\star}}^{\beta - 2}}{\beta} \cdot  + \frac{\mu^{\star} R^n }{n}.
\end{align}
Again using the fact that $y(t)>y_0>\frac{1}{s_{\star}}$, we derive that $y^{\delta-1}<s^{1-\delta}_{\star}$. Combining this with the first restriction in (\ref{s7}) and $\alpha<1$, we obtain
\begin{align}\label{out}
\mathrm{e}^{-\theta t} l \alpha^{1-\alpha} (1-\alpha) \Big(s-\frac{1-\alpha}{y(t)}\Big)^{\alpha-1}y^{\delta-1}(t)
\leq  l \alpha^{1-\alpha} (\alpha s_{\star})^{\alpha-1} s^{1-\delta}_{\star}
=ls^{\alpha-\delta}_{\star}.
\end{align}

Thus, applying (\ref{thetat}), (\ref{y2-1}), (\ref{s10}), (\ref{eq3_18}), (\ref{out}) and the first inequality in (\ref{s7}), after a direct computation, we obtain
\begin{align*}
\begin{aligned}
\mathcal{P}[\underline{U}, \underline{W}](s, t) =& \underline{U}_t - n^2 s^{2 - \frac{2}{n}}D(n\underline{U}_{s}) \underline{U}_{ss} - S\left(n \underline{U}_s\right) \cdot \big(\underline{W} - \frac{\mu^{\star} s}{n}\big) \\
=& -\theta \mathrm{e}^{-\theta t} l \alpha^{-\alpha} \Big(s-\frac{1-\alpha}{y(t)}\Big) ^{\alpha} 
+ \mathrm{e}^{-\theta t} l \alpha^{1-\alpha} (1-\alpha) \Big(s-\frac{1-\alpha}{y(t)}\Big)^{\alpha-1}\frac{y^{\prime}(t)}{y^2(t)}\\ \notag
&+ n^2 s^{2-\frac{2}{n}} \mathrm{e}^{-\theta t}l(1-\alpha) \alpha^{1-\alpha}  \Big(s-\frac{1-\alpha}{y(t)}\Big)^{\alpha-2} D\Big(n\mathrm{e}^{-\theta t}l \alpha^{1-\alpha}\Big(s-\frac{1-\alpha}{y(t)}\Big)^{\alpha-1} \Big)\\ \notag
&-S\Big(n\mathrm{e}^{-\theta t}l \alpha^{1-\alpha}\Big(s-\frac{1-\alpha}{y(t)}\Big)^{\alpha-1} \Big) \cdot \Big(\mathrm{e}^{-\theta t}l \beta^{-\beta} \Big(s-\frac{1-\beta}{y(t)}\Big)^{\beta}-\frac{\mu^{\star}s}{n}\Big)\\ \notag
\leq & -\frac{l\theta \alpha^{-\alpha}}{\mathrm{e}} {(\alpha s_{\star})}^\alpha+  l  s^{\alpha-\delta}_{\star} + n^2 R^{2n - 2} l \alpha^{1-\alpha}  {(\alpha s_{\star})}^{\alpha - 2} D_{\max}   + \frac{\mu^{\star} R^n{S_{\max}}}{n} \\ \notag
\leq & -\frac{l\theta_{\star} {s_{\star}}^\alpha}{\mathrm{e}}+  ls^{\alpha-\delta}_{\star} + \frac{n^2 R^{2n - 2} l  {s_{\star}}^{\alpha - 2} D_{\max} }{\alpha}  + \frac{\mu^{\star} R^n{S_{\max}}}{n}
\leq  0.
\end{aligned}
\end{align*}
Similarly, due to (\ref{eq3_18-1}) and the second restriction in (\ref{s7}), we have
\begin{align*}
\begin{aligned}
\mathcal{Q}[\underline{U}, \underline{W}](s, t) 
=& \underline{W}_t - n^2 s^{2 - \frac{2}{n}} \underline{W}_{ss} - n \underline{W}_s \cdot \big(\underline{U} - \frac{\mu^{\star} s}{n}\big) \\
\leq & -\frac{l\theta {s_{\star}}^\beta}{\mathrm{e}}+  ls^{\beta-\delta}_{\star} + \frac{n^2 R^{2n - 2}l {s_{\star}}^{\beta - 2}}{\beta}  + \frac{\mu^{\star} R^n}{n}
\leq  0.
\end{aligned}
\end{align*}
We complete our proof.
\end{proof}

\emph{\textbf{Proof of Theorem \ref{thm1_1}.}}
Let $\delta$, $\alpha$ and $\beta$ be as in Lemma~\ref{alphabeta}.
For given $s_{\star} \in (0,R^n)$ satisfying (\ref{sstar1})-(\ref{Q}) and $\theta_{\star}$ from Lemma~\ref{lem3.4}, we can fix $\theta$ such that
\begin{align}\label{theta-1}
\theta>\theta_{\star}.
\end{align}
Let $y$ be the solution of the ordinary differential equation
\begin{align}\label{eq3.2}
	\begin{cases}
		y^{\prime}(t) =\kappa y^{1 + \delta}(t), \quad t \in (0, T), \\
		y(0) = y_0,
	\end{cases}
\end{align}
where 
\begin{align}\label{y0}
\kappa= \min\Big\{1,\frac{k_Sn^ql^{q}}{2\mathrm{e}^{|q|+1}}  , \frac{ nl}{2\mathrm{e}^{2}} \Big\}, \quad  y_0>\max\big\{\big(\frac{\theta}{\kappa \delta }\big)^{\frac{1}{\delta}}, y_{\star},\frac{1}{s_{\star}}\big\}
\end{align}
with $y_{\star}$ defined in (\ref{ystar}) and
\begin{align*}
	T = \frac{1}{\kappa \delta} y_0^{-\delta}.
\end{align*}
Then,
\begin{align}\label{T-1}
T<\frac{1}{\theta}
\end{align}
and
\begin{align*}
\lim _{t \nearrow T_{\max }}y(t)=\infty.
\end{align*}
(\ref{eq3.2}) and (\ref{y0}) ensure that all the requirements on $y(t)$ in Lemmas ~\ref{lem3.1}-\ref{lem3.4} are met, while (\ref{theta-1}) ensures the requirement on $\theta$ is satisfied. Therefore, we may apply Lemmas~\ref{lem3.1}-\ref{lem3.4}, along with (\ref{T-1}), to conclude that
\begin{align}\label{all}
\mathcal{P}[\underline{U}, \underline{W}](s, t) \leq 0, 
\quad \mathcal{Q}[\underline{U}, \underline{W}](s, t) \leq 0,
\end{align}
for all $t \in (0,T) $ and $s \in \left(0, R^n\right) \backslash \{\frac{1}{y(t)}\}$.

By Lemma~\ref{s}, (\ref{eq2_5}) and (\ref{all}), we can use the comparison principle from Lemma~\ref{lem2.2} to verify that 
\begin{align*}
\underline{U}(s, t) \leq U(s, t) , 
\quad \underline{W}(s, t) \leq W(s, t),
\quad  (s,t) \in [0, R^n]   \times [0, T).
\end{align*}
In view of $U(0,t)=\underline{U}(0,t)=0$, we have
\begin{align}\label{us=0}
\frac{1}{n} \cdot u(0, t) = U_s(0, t) \geq \underline{U}_s(0, t) = \mathrm{e}^{-\theta t} \cdot l y^{1 - \alpha}(t) \geq \frac{l}{\mathrm{e}} \cdot y^{1 - \alpha}(t) \rightarrow +\infty \quad \text{as } t \nearrow T.
\end{align}
Similarly, we conclude that
\begin{align}\label{ws=0}
\frac{1}{n} \cdot w(0, t)  \geq \frac{l}{\mathrm{e}} \cdot y^{1 - \beta}(t) \rightarrow +\infty \quad \text{as } t \nearrow T.
\end{align}
Therefore, (\ref{us=0}) and (\ref{ws=0}) ensure $T_{\max} \leq T < \infty$.

\section{Global Boundedness. Proof of Theorem \ref{thm1_2}}\label{bounded}
In this section, we prove Theorem~\ref{thm1_2}. Inspired by \cite{2021-JMAA-Zhong}, we construct a Lyapunov functional of the form 
\begin{align}\label{f-def}
\mathcal{F}(t):=\mathcal{F}(u, v, w, z)=\int_{\Omega} {G}(u) d x+\int_{\Omega} w\mathrm{ln}w d x-\int_{\Omega} \nabla v \cdot \nabla z d x,
\end{align}
where ${G}$ is given by 
\begin{align}\label{G-1}
{G}(s):=\int_{s_1}^s \int_{s_1}^\sigma \frac{{D}(\tau)}{S(\tau)} d \tau d \sigma
\end{align}
with some $s>s_1>0$. The Lyapunov functional $\mathcal{F}(t)$ is nonincreasing along trajectories and plays a crucial role in demonstrating the boundedness of solutions.
\begin{lem}\label{lemma-f}
Suppose that $\left(u, v, w, z\right)$ is a classical solution of $(\ref{eq1.1.0})$, as given in Proposition~\ref{local}. Assume that the functions $D(s)$ and $S(s)$ fulfill $(\ref{eq1.2})$. Then
\begin{align}\label{f}
\frac{d}{d t} \mathcal{F}(t)
=-\int_{\Omega} S(u)\left|\frac{\mathcal{D}(u)}{S(u)} \nabla u-\nabla v\right|^2 d x-\int_{\Omega} w\left|\frac{\nabla w}{w} -\nabla z\right|^2 d x
=-\mathcal{D}(t), \quad t>0 .
\end{align}
Moreover, let $u_0, w_0$ satisfy $(\ref{eq1.1})$, and let $v_0, z_0 \in C^2(\bar{\Omega})$ be the corresponding solutions of
\begin{align}\label{v0}
0=\Delta v_0 -\mu_w +w_0, \quad \left.\partial_\nu v_0\right|_{\partial \Omega}=0, \quad \int_{\Omega} v_0 dx=0
\end{align} 
and 
\begin{align}\label{z0}
0=\Delta z_0 -\mu_u +u_0, \quad \left.\partial_\nu z_0\right|_{\partial \Omega}=0, 
\quad \int_{\Omega} z_0 dx=0,
\end{align}
then
\begin{align}\label{f-inter}
\mathcal{F}(t)-\mathcal{F}(0)=-\int_0^t \mathcal{D}(\zeta) d \zeta, \quad t \in\left(0, T_{\max }\right).
\end{align}
\end{lem}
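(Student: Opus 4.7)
The plan is to differentiate the three pieces of $\mathcal{F}$ separately along the solution, add the results, and recognize the total as a sum of two negative squares. The first two pieces are the easy ones. Using $G''(s)=D(s)/S(s)$, testing the $u$-equation against $G'(u)$ and integrating by parts with the Neumann conditions $\partial_\nu u=\partial_\nu v=0$ gives
\[
\frac{d}{dt}\int_\Omega G(u)\,dx = -\int_\Omega \frac{D^2(u)}{S(u)}|\nabla u|^2\,dx + \int_\Omega D(u)\nabla u\cdot\nabla v\,dx,
\]
and analogously, testing the $w$-equation against $\ln w+1$ yields
\[
\frac{d}{dt}\int_\Omega w\ln w\,dx = -\int_\Omega \frac{|\nabla w|^2}{w}\,dx + \int_\Omega \nabla w\cdot\nabla z\,dx.
\]

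The delicate piece is to differentiate $I(t):=\int_\Omega \nabla v\cdot\nabla z\,dx$. My approach is to first rewrite it in two equivalent forms via integration by parts and the elliptic equations: since $\int_\Omega v\,dx=0$ and $\Delta z=\mu_u-u$, one has $I(t)=-\int_\Omega v\Delta z\,dx=\int_\Omega vu\,dx$, and symmetrically $I(t)=\int_\Omega wz\,dx$. Differentiating the first form gives $I'(t)=\int_\Omega v_t u\,dx+\int_\Omega v u_t\,dx$, and the key symmetry identity $\int_\Omega v_t u\,dx=\int_\Omega w_t z\,dx$---obtained by differentiating $\Delta v=\mu_w-w$ in time to produce $\Delta v_t=-w_t$ (using $(\mu_w)_t=0$ by mass conservation), and then performing two integrations by parts while exploiting $\Delta z=\mu_u-u$ together with $\int_\Omega v_t\,dx=0$---reduces everything to computing $\int_\Omega vu_t\,dx$ and $\int_\Omega w_t z\,dx$. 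Substituting the $u$- and $w$-equations respectively and integrating by parts then yields
\[
-I'(t) = \int_\Omega D(u)\nabla u\cdot\nabla v\,dx - \int_\Omega S(u)|\nabla v|^2\,dx + \int_\Omega \nabla w\cdot\nabla z\,dx - \int_\Omega w|\nabla z|^2\,dx.
\]

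Summing the three derivatives, the cross terms $\int_\Omega D(u)\nabla u\cdot\nabla v\,dx$ and $\int_\Omega \nabla w\cdot\nabla z\,dx$ each appear with coefficient $2$, which is precisely the balance needed to complete squares and identify the total as $-\mathcal{D}(t)$, giving \eqref{f}. Since $v(\cdot,0)=v_0$ and $z(\cdot,0)=z_0$ by the uniqueness part of Proposition~\ref{local}, the identity \eqref{f-inter} then follows by integrating \eqref{f} in time. The main technical obstacle is that Proposition~\ref{local} only asserts $v,z\in C^{2,0}$, so $v_t, z_t$ do not exist \emph{a priori}; the remedy is to differentiate the elliptic equations for $v,z$ in time and, using the $C^{2,1}$-regularity of $u,w$ on $(0,T_{\max})$ together with standard elliptic regularity, obtain $v_t, z_t\in C^2(\overline{\Omega})$ pointwise in $t\in(0,T_{\max})$, which suffices to justify every integration by parts above.
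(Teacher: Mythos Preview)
Your proof is correct and essentially the same as the paper's. The only organizational difference is that the paper, rather than differentiating $I(t)=\int_\Omega\nabla v\cdot\nabla z$ directly, uses $u_t=-(\Delta z)_t$ and $w_t=-(\Delta v)_t$ inside the computations of $\frac{d}{dt}\int_\Omega G(u)$ and $\frac{d}{dt}\int_\Omega w\ln w$ so that the leftover terms $\int_\Omega(\nabla z)_t\cdot\nabla v+\int_\Omega\nabla z\cdot(\nabla v)_t$ sum to $I'(t)$ and cancel---the same identities you use, packaged differently.
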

\begin{proof}
Multiplying the first equation in (\ref{eq1.1.0}) by $G^{\prime}(u)$, we deduce that
\begin{align*}
&\frac{d}{d t} \int_{\Omega} {G}(u) dx \nonumber \\
=& \int_{\Omega}G^{\prime}(u) \nabla \cdot(D(u) \nabla u-S(u) \nabla v) dx \nonumber \\ 
=&-\int_{\Omega} \frac{D^2(u)}{S(u)}|\nabla u|^2 dx
+ \int_{\Omega}D(u) \nabla u \cdot \nabla v dx  \nonumber \\
=&- \int_{\Omega} S(u) \left|\frac{D(u)}{S(u)}\nabla u -\nabla v\right|^2 d x
+ \int_{\Omega} S(u)  |\nabla v|^2 dx
- \int_{\Omega} D(u)\nabla u \cdot \nabla v dx .
\end{align*}
Using the first and fourth equations in (\ref{eq1.1.0}) and integrating by parts, we have
\begin{align*}
- \int_{\Omega} D(u)\nabla u \cdot \nabla v dx
=& \int_{\Omega} \Big(u_t+\nabla \cdot \big(S(u) \nabla v\big)\Big) v dx \nonumber  \\ 
=& \int_{\Omega} u_t v  dx
-  \int_{\Omega} S(u) |\nabla v|^2  dx \nonumber  \\ 
=& \int_{\Omega} (\mu_u-\Delta z)_t v  dx
-  \int_{\Omega} S(u) |\nabla v|^2  dx \nonumber  \\ 
=& \int_{\Omega} (\nabla z)_t \nabla v  dx
-  \int_{\Omega} S(u) |\nabla v|^2  dx .
\end{align*}
Thus, we deduce that
\begin{align}\label{u-f}
\frac{d}{d t} \int_{\Omega} {G}(u) dx
=- \int_{\Omega} S(u) \left|\frac{D(u)}{S(u)}\nabla u -\nabla v\right|^2 d x
+\int_{\Omega} (\nabla z)_t \cdot \nabla v  dx.
\end{align}
By the same way, we know that
\begin{align}\label{w-f}
\frac{d}{d t} \int_{\Omega}w \mathrm{ln}w dx
=- \int_{\Omega} w \left|\frac{\nabla w}{w} -\nabla z\right|^2 d x
+\int_{\Omega} \nabla z \cdot (\nabla v)_t  dx.
\end{align}
Consequently, by adding (\ref{u-f}) and (\ref{w-f}), we obtain (\ref{f}). For any $\epsilon>0$, we integrate (\ref{f}) over $(\epsilon,t)$. Thanks to Proposition~\ref{local}, we are able to take $\epsilon \rightarrow 0$ and deduce (\ref{f-inter}). 
\end{proof}
By employing the fact that the Lyapunov functional is nonincreasing along trajectories, we first obtain the boundedness of $\|u\|_{L^{p-q+2}(\Omega)}$ when $q-p<2-\frac{n}{2}$.
\begin{lem}
Let $n \geq 3$. Assume that $D(s)$ and $S(s)$ satisfy $(\ref{eq1.2})$ as well as $(\ref{D-g})$ and $(\ref{S-g})$ with some $K_D, K_S>0$ and $p,q\in \mathbb{R}$ fulfilling 
\begin{align*}
q-p<2-\frac{n}{2}.
\end{align*}
Then, for any $(u_0,v_0,w_0,z_0)$ satisfying $(\ref{eq1.1})$, $(\ref{v0})$ and $(\ref{z0})$, there exists a positive constant $C$, independent of $t$, such that
\begin{align}\label{up-q+2}
 \int_{\Omega} (1+u)^{p-q+2} d x \leq C ,
   \quad t\in (0,T_{\max}).
\end{align}
\end{lem}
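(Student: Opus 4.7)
The plan is to use the Lyapunov identity $\mathcal{F}(t)\le\mathcal{F}(0)$ from Lemma~\ref{lemma-f} and show that $\int_\Omega G(u)\,dx$ dominates the coupling $\int_\Omega\nabla v\cdot\nabla z\,dx$ as soon as $m:=p-q+2>n/2$. First I would extract a coercive lower bound on $G$: from \eqref{D-g} and \eqref{S-g} one has $D(\tau)/S(\tau)\ge (K_D/K_S)(1+\tau)^{p-q}$, and the hypothesis $q-p<2-\tfrac{n}{2}$ (with $n\ge 3$) ensures both $p-q+1>0$ and $m>\tfrac{n}{2}\ge\tfrac{3}{2}>1$. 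A direct double integration then yields $G(s)\ge c_1(1+s)^m-c_2$ for constants $c_1,c_2>0$ depending only on $K_D,K_S,s_1,p,q$, and integrating over $\Omega$ gives $\int_\Omega G(u)\,dx\ge c_1\int_\Omega(1+u)^m\,dx-c_2|\Omega|$. The entropy term is handled by the elementary bound $\int_\Omega w\ln w\,dx\ge -|\Omega|/e$.

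Next I would rewrite the coupling term. Integration by parts together with the second and fourth equations in \eqref{eq1.1.0} and $\int_\Omega v=\int_\Omega z=0$ gives $\int_\Omega\nabla v\cdot\nabla z\,dx=-\int_\Omega v\Delta z\,dx=\int_\Omega uv\,dx$. By H\"older, $\int_\Omega uv\,dx\le \|u\|_{L^m(\Omega)}\|v\|_{L^{m'}(\Omega)}$ with $m'=m/(m-1)$. Here the critical threshold enters: $m>\tfrac{n}{2}$ is equivalent to $m'<\tfrac{n}{n-2}$, which is precisely the range in which standard elliptic $L^1$-theory applied to the Neumann problem $-\Delta v=w-\mu_w$, $\int_\Omega v=0$ (via either the Riesz-potential estimate $I_2:L^1\to L^{n/(n-2),\infty}$, or via $v\in W^{1,q}$ with $q<n/(n-1)$ combined with Sobolev embedding) yields $\|v\|_{L^{m'}(\Omega)}\le C\|w-\mu_w\|_{L^1(\Omega)}\le 2Cm_w$, a bound uniform in $t$ by mass conservation.

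With $\|v\|_{L^{m'}}$ thus controlled by a time-independent constant, I would close the argument by Young's inequality: since $m>1$, for any $\epsilon>0$ one has $\|u\|_{L^m}\le\epsilon\int_\Omega u^m\,dx+C_\epsilon$. Plugging all the estimates into $\mathcal{F}(t)\le\mathcal{F}(0)$ and taking $\epsilon$ small enough to absorb into the coercive term $c_1\int_\Omega(1+u)^m\,dx$ on the left-hand side yields \eqref{up-q+2}.

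The main obstacle is this elliptic step: one needs a uniform-in-$t$ $L^{m'}$ bound on $v$ starting only from the conserved $L^1$ bound on $w$, which relies exactly on the strict inequality $q-p<2-\tfrac{n}{2}$ placing $m'$ in the Riesz-potential range $m'<\tfrac{n}{n-2}$. If this inequality were degenerate the argument would break at precisely this point, which explains why the threshold in Theorem~\ref{thm1_2} is sharp and why the same Lyapunov functional cannot be pushed beyond it.
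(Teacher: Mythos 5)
Your proof is correct but follows a genuinely different route from the paper in one crucial step, namely the treatment of the cross term $\int_\Omega\nabla v\cdot\nabla z\,dx$. You integrate by parts onto $z$ to obtain $\int_\Omega uv\,dx$, estimate it by $\|u\|_{L^m}\|v\|_{L^{m'}}$ with $m=p-q+2$, and then control $\|v\|_{L^{m'}}$ uniformly in $t$ directly from the conserved mass $\|w\|_{L^1}$ via the endpoint ($L^1$-based) elliptic theory, exploiting the equivalence $m>\tfrac{n}{2}\iff m'<\tfrac{n}{n-2}$; absorption is then against the sublinear factor $\|u\|_{L^m}$. The paper instead integrates by parts onto $v$ to obtain $\int_\Omega wz\,dx$, estimates it by $\|w_0\|_{L^1}\|z\|_{L^\infty}$, and then invokes $W^{2,b}$ elliptic regularity with $n/2<b<p-q+2$ plus Sobolev embedding to get $\|z\|_{L^\infty}\lesssim\|u\|_{L^b}$, which is Young-absorbed into $\int(1+u)^{p-q+2}$. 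The two arguments are essentially dual; yours has the aesthetic advantage of decoupling the estimate (the $v$-bound depends only on the conserved mass $m_w$, so only one term needs absorbing), at the cost of needing the slightly less elementary $L^1\to L^{n/(n-2),\infty}$ elliptic estimate, whereas the paper uses only textbook $W^{2,b}$ regularity. Both hinge on the same threshold $p-q+2>n/2$, entering via $b>n/2$ in the paper and via $m'<n/(n-2)$ in your argument. One small inaccuracy to flag: after the double integration, the lower bound on $G$ contains a term linear in $s$, not merely a constant; but since $m>1$, that linear term is absorbed into $\tfrac{c_1}{2}(1+s)^m$ by Young (or equivalently by mass conservation of $u$ after integrating), so the conclusion $\int_\Omega G(u)\,dx\ge c_1'\int_\Omega(1+u)^m\,dx-c_2'$ still holds.
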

\begin{proof}
According to (\ref{f-inter}) and $\int_{\Omega} z(\cdot,t) dx=0$, we deduce that
\begin{align}\label{f-in}
\int_{\Omega}{G}(u) d x+\int_{\Omega} w\mathrm{ln}w d x 
& \leq \int_{\Omega} \nabla v \cdot \nabla z dx+\mathcal{F}(0) \nonumber  \\ 
& =-\int_{\Omega}(\Delta v) z d x+\mathcal{F}(0) \nonumber  \\ 
& =\int_{\Omega} w z d x+\mathcal{F}(0).
\end{align}
It follows from (\ref{D-g}) and (\ref{S-g}) that 
\begin{align*}
{G}(s):=&\int_{s_0}^s \int_{s_0}^\sigma \frac{{D}(\tau)}{S(\tau)} d \tau d \sigma
\geq \frac{K_D}{K_S}\int_{s_0}^s \int_{s_0}^\sigma (1+\tau)^{p-q} d \tau d \sigma \nonumber \\
=&\frac{K_D}{K_S}\Big(\frac{(1+s)^{p-q+2}}{(p-q+1)(p-q+2)}-\frac{(1+s_0)^{p-q+2}}{(p-q+1)(p-q+2)}-\frac{(1+s_0)^{p-q+1}(s-s_0)}{p-q+1}\Big).
\end{align*}
Integrating with respect to $x$ and using $\int_{\Omega}u(\cdot,t) d x=\int_{\Omega}u_0 d x$, along with $q-p<2-\frac{n}{2}<1$ by $n \geq 3$, one can find a positive constant $c_1$ such that
\begin{align*}
\int_{\Omega} (1+u)^{p-q+2} dx
\leq c_1\int_{\Omega}G(u) dx +c_1.
\end{align*}
Substituting this into (\ref{f-in}), there exists a positive constant $c_2$ such that
\begin{align}\label{f-all}
\int_{\Omega} (1+u)^{p-q+2} dx
+c_1 \int_{\Omega} w\mathrm{ln}w d x 
\leq &c_1\int_{\Omega} w z d x+c_2 \nonumber \\
\leq &c_1\| w_0\|_{L^1(\Omega)}\|z\|_{L^{\infty}(\Omega)}+c_2.
\end{align}
For a fixed $b$ satisfying $p-q+2>b>\frac{n}{2}>1$, one can apply standard elliptic regularity theory, Sobolev imbedding theorems and Young's inequality to obtain positive constants $c_3, c_4, c_5>0$ such that
\begin{align}\label{zLinfty}
\|z\|_{L^{\infty}(\Omega)}
\leq c_3\|z\|_{W^{2,b}(\Omega)}
 \leq c_4 \|u\|_{L^{b}(\Omega)} 
\leq \frac{1}{2c_1\| w_0\|_{L^1(\Omega)}}\int_{\Omega} (1+u)^{p-q+2} dx +c_5.
\end{align}
Inserting (\ref{zLinfty}) into (\ref{f-all}), along with $x\mathrm{ln}x \geq -\frac{1}{e} $ for all $x>0$, we complete our proof.
\end{proof}
\begin{lem}
Let $n \geq 3$. Assume that $D(s)$ and $S(s)$ satisfy $(\ref{eq1.2})$ as well as $(\ref{D-g})$ and $(\ref{S-g})$ with some $K_D, K_S>0$ and $p,q\in \mathbb{R}$ fulfilling 
\begin{align}\label{pq-all-1}
q-p<2-\frac{n}{2}.
\end{align}
Then for all $k_1, k_2 \in(1,\infty)$, there exists a positive constant $C(k_1,k_2)$, independent of $t$, such that
\begin{align}\label{uw-bounded}
  \|u(\cdot, t)\|_{L^{k_1}\left(\Omega \right)}+\ \|w(\cdot, t)\|_{L^{k_2}\left(\Omega \right)} \leq C(k_1,k_2), 
  \quad  t\in (0,T_{\max}).
\end{align}
\end{lem}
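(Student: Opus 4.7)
Our hypothesis $q-p<2-\frac{n}{2}$ is equivalent to $r_0:=p-q+2>\frac{n}{2}$, and the previous lemma supplies $\|u(\cdot,t)\|_{L^{r_0}(\Omega)}\leq C$ uniformly in $t\in(0,T_{\max})$. The plan is a two-stage bootstrap: first upgrade $w$ to be bounded in every $L^{k_2}$ using this baseline on $u$, then use the resulting high integrability of $w$ to upgrade $u$ to every $L^{k_1}$.

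\textbf{Stage 1 ($L^{k_2}$-bounds for $w$).}
Testing the third equation of \eqref{eq1.1.0} by $w^{k_2-1}$, integrating by parts and using the fourth equation to rewrite $\int_\Omega \nabla w^{k_2}\cdot\nabla z\,dx=\int_\Omega w^{k_2}(u-\mu_u)\,dx$ yields
\begin{align*}
\frac{1}{k_2}\frac{d}{dt}\int_\Omega w^{k_2}\,dx+\frac{4(k_2-1)}{k_2^2}\int_\Omega|\nabla w^{k_2/2}|^2\,dx=\frac{k_2-1}{k_2}\int_\Omega w^{k_2}(u-\mu_u)\,dx.
\end{align*}
By Hölder, $\int_\Omega w^{k_2}u\,dx\leq\|u\|_{L^{r_0}}\|w^{k_2/2}\|^2_{L^{2r_0/(r_0-1)}}$, and because $r_0>n/2$ one checks that $2r_0/(r_0-1)<2n/(n-2)$. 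Gagliardo--Nirenberg interpolating between the $L^2$-norm (controlled by the dissipation) and the $L^{2/k_2}$-norm (equal to the conserved $m_w=\|w\|_{L^1}$), combined with Young's inequality, turns this into an ODI of the form $y'(t)+C_1 y(t)\leq C_2$ with $y(t):=\int_\Omega w^{k_2}\,dx$, and hence $\|w(\cdot,t)\|_{L^{k_2}(\Omega)}\leq C(k_2)$.

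\textbf{Stage 2 ($L^{k_1}$-bounds for $u$).}
Stage 1 and elliptic regularity applied to $0=\Delta v-\mu_w+w$ provide $\|\nabla v\|_{L^r(\Omega)}\leq C(r)$ for every $r<\infty$. Testing the first equation of \eqref{eq1.1.0} by $(1+u)^{k_1-1}$ and using \eqref{D-g},
\begin{align*}
\frac{1}{k_1}\frac{d}{dt}\int_\Omega(1+u)^{k_1}\,dx+\frac{4(k_1-1)K_D}{(k_1+p)^2}\int_\Omega|\nabla(1+u)^{(k_1+p)/2}|^2\,dx\leq(k_1-1)\int_\Omega S(u)(1+u)^{k_1-2}\nabla u\cdot\nabla v\,dx.
\end{align*}
Setting $F(u):=\int_0^u S(\tau)(1+\tau)^{k_1-2}\,d\tau$, which by \eqref{S-g} satisfies $F(u)\leq C(k_1)(1+u)^{k_1+q-1}$, integration by parts with the second equation gives
\begin{align*}
(k_1-1)\int_\Omega\nabla F(u)\cdot\nabla v\,dx=(k_1-1)\int_\Omega F(u)(w-\mu_w)\,dx\leq C(k_1)\int_\Omega(1+u)^{k_1+q-1}w\,dx.
\end{align*}
Using $\|w\|_{L^r}\leq C(r)$ for arbitrarily large $r$, Hölder reduces the right-hand side to a power of $\|1+u\|_{L^a}$ with $a$ only slightly above $k_1+q-1$. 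Writing this via $f:=(1+u)^{(k_1+p)/2}$ and applying Gagliardo--Nirenberg interpolating between $\|\nabla f\|_{L^2}$ and $\|f\|_{L^{2r_0/(k_1+p)}}$ (controlled by $\|u\|_{L^{r_0}}$) produces a term $\|\nabla f\|_{L^2}^{2\theta}$ with $\theta\cdot\frac{2(k_1+q-1)}{k_1+p}<2$, precisely because $q-p<2-\frac{n}{2}$. Young's inequality then absorbs this into the dissipation, and Grönwall yields $\|u(\cdot,t)\|_{L^{k_1}(\Omega)}\leq C(k_1)$.

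\textbf{Main obstacle.}
The principal difficulty is the exponent accounting in Stage 2: after reducing the chemotactic contribution by Hölder and rewriting via $f=(1+u)^{(k_1+p)/2}$, one must verify that Gagliardo--Nirenberg produces an exponent strictly less than $2$ on $\|\nabla f\|_{L^2}$, which requires tracking the Sobolev exponent $\sigma=2(k_1+q-1)/(k_1+p)$ against the critical $2n/(n-2)$ and matching it with the baseline exponent $2r_0/(k_1+p)$. The condition $q-p<2-\frac{n}{2}$ is exactly what renders this subcritical uniformly for every $k_1\in(1,\infty)$, so the iteration closes without restriction.
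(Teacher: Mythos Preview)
Your proposal is correct, but it follows a genuinely different route from the paper. The paper treats the two species simultaneously: it tests the $u$--equation with $(1+u)^{k_2-1}$ and the $w$--equation with $w^{k_1-1}$, adds the resulting inequalities, and derives a single ODI for the combined functional
\[
y(t)=\tfrac{1}{k_2}\int_\Omega (1+u)^{k_2}\,dx+\tfrac{1}{k_1}\int_\Omega w^{k_1}\,dx,
\]
choosing $k_1$ and $k_2$ in a carefully tuned mutual relationship (\ref{k2}) so that each cross term can be absorbed via Gagliardo--Nirenberg (interpolating $w$ against its $L^1$ mass and $u$ against the $L^{r_0}$ baseline). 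You instead decouple the argument: Stage~1 closes the $w$--estimate using only $\|u\|_{L^{r_0}}$, and Stage~2 then exploits the resulting $\|w\|_{L^r}$--bounds for arbitrarily large $r$ to close the $u$--estimate. Your decoupled scheme is conceptually cleaner and makes transparent exactly where $r_0>n/2$ enters (in Stage~1, to place $2r_0/(r_0-1)$ below the critical Sobolev exponent $2n/(n-2)$); the paper's coupled approach avoids the two passes but pays with a more intricate parameter selection. One minor remark: in your Stage~2 exponent count the inequality that actually emerges is $q-p<\frac{n+4}{n+2}$, which for $n\ge 3$ is strictly weaker than $q-p<2-\frac{n}{2}$, so the hypothesis is not ``exactly'' what is needed there (it is already fully used in Stage~1); and the ODI you obtain in Stage~1 is of the form $y'+C_1 y^\gamma\le C_2$ rather than linear, though this of course still yields the bound.
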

\begin{proof}
We first claim that, for any
\begin{align*}
k_1>\max\Big\{1,\frac{(1+\frac{2}{n})(p-q+2)+p}{\frac{2}{n}(p-q+2)+p-q+1}-\frac{2}{n}\Big\},
\end{align*}
we can choose 
\begin{align}\label{k2}
k_2 \in \Big(&\max\big\{p-q+2,\frac{n}{2}\big(k_1+\frac{2}{n}\big)-\frac{2}{n}(p-q+2)-p\big\}, \\ \notag
&\big(\frac{2}{n}(p-q+2)+p-q+1\big)\big(k_1+\frac{2}{n}\big)-\frac{2}{n}(p-q+2)-p\Big).
\end{align}
Owing to (\ref{pq-all-1}), we infer that 
\begin{align*}
\frac{2}{n}(p-q+2)+p-q+1>\frac{2}{n}\cdot \frac{n}{2}+\frac{n}{2}-1=\frac{n}{2}.
\end{align*}
Then, for any $k_1>1$, we have
\begin{align}\label{k2-1}
&\frac{n}{2}\big(k_1+\frac{2}{n}\big)-\frac{2}{n}(p-q+2)-p \notag\\ 
<&\Big(\frac{2}{n}(p-q+2)+p-q+1\Big)\big(k_1+\frac{2}{n}\big)-\frac{2}{n}(p-q+2)-p.
\end{align}
Thanks to $k_1>\frac{(1+\frac{2}{n})(p-q+2)+p}{\frac{2}{n}(p-q+2)+p-q+1}-\frac{2}{n}$, we know that
\begin{align*}
p-q+2<\Big(\frac{2}{n}(p-q+2)+p-q+1\Big)\big(k_1+\frac{2}{n}\big)-\frac{2}{n}(p-q+2)-p.
\end{align*}
Combining this with (\ref{k2-1}) guarantees the existence of $k_2$.

Since $k_2$ satisfies
\begin{align*}
\frac{n}{2}\big(k_1+\frac{2}{n}\big)-\frac{2}{n}(p-q+2)-p<k_2<\Big(\frac{2}{n}(p-q+2)+p-q+1\Big)\big(k_1+\frac{2}{n}\big)-\frac{2}{n}(p-q+2)-p,
\end{align*}
we obtain that
\begin{align}\label{pq}
\frac{n}{2}\big(k_1+\frac{2}{n}\big)<\frac{2}{n}(p-q+2)+k_2+p<\Big(\frac{2}{n}(p-q+2)+p-q+1\Big)\big(k_1+\frac{2}{n}\big).
\end{align}
Using the right hand side of (\ref{pq}), we derive that
\begin{align}\label{eq-1}
\frac{(k_1+\frac{2}{n})(k_2+q-1)}{k_1+\frac{2}{n}-1}<\frac{2}{n}(p-q+2)+k_2+p.
\end{align}
Since $k_2>(1-\frac{2}{n})(p-q+2)-p$ by $k_2>p-q+2$, we deduce that
\begin{align}\label{k1} \frac{1}{2}-\frac{1}{n}<\frac{k_2+p}{2(\frac{2}{n}(p-q+2)+k_2+p)}<\frac{k_2+p}{2(p-q+2)}.
\end{align}
Again using $k_2>p-q+2$, we have
\begin{align}\label{k1-2} 
\frac{1}{2}-\frac{1}{n}<\frac{k_2+p}{2k_2}<\frac{k_2+p}{2(p-q+2)}.
\end{align}

By means of (\ref{S-g}) and $k_2>p-q+2>\frac{n}{2}$, we have
\begin{align}\label{F}
F(u):=\int_0^u S(\sigma)(1+\sigma)^{k_2-2}  d\sigma
\leq K_S\int_0^u (1+\sigma)^{k_2+q-2}  d\sigma
\leq \frac{K_S (1+u)^{k_2+q-1}}{k_2+q-1} .
\end{align}
Multiplying the first equation in (\ref{eq1.1.0}) by $(1+u)^{k_2-1}$ and integrating by parts, along with (\ref{F}) and (\ref{D-g}), we find that
\begin{align}\label{uk1}
&\frac{1}{{k_2}} \frac{d}{d t} \int_{\Omega} (1+u)^{k_2} dx \nonumber \\
=&-({k_2}-1)\int_{\Omega} D(u)(1+u)^{{k_2}-2}|\nabla u|^2 dx 
+({k_2}-1)\int_{\Omega} S(u)(1+u)^{{k_2}-2} \nabla u \cdot \nabla v dx\notag \\
\leq &  -K_D({k_2}-1) \int_{\Omega}(1+u)^{{k_2}+p-2}|\nabla u|^2 dx
+({k_2}-1) \int_{\Omega} S(u)(1+u)^{{k_2}-2} \nabla u \cdot \nabla v dx\notag \\
= & \frac{-4K_D({k_2}-1)}{({k_2}+p)^2} \int_{\Omega} |\nabla (1+u)^\frac{{k_2}+p}{2}|^2dx -({k_2}-1)\int_{\Omega}F(u) \Delta vdx  \notag \\
\leq & \frac{-4K_D({k_2}-1)}{({k_2}+p)^2} \int_{\Omega} |\nabla (1+u)^\frac{{k_2}+p}{2}|^2dx 
+({k_2}-1) \int_{\Omega}F(u) wdx \notag \\
\leq & \frac{-4K_D({k_2}-1)}{({k_2}+p)^2} \int_{\Omega} |\nabla (1+u)^\frac{{k_2}+p}{2}|^2dx 
+\frac{K_S({k_2}-1)}{{k_2}+q-1} \int_{\Omega}(1+u)^{{k_2}+q-1} wdx .
\end{align}
Similarly, multiplying the third equation in (\ref{eq1.1.0}) by $w^{{k_1}-1}$ and integrating by parts, we infer that
\begin{align}\label{wk2}
\frac{1}{{k_1}} \frac{d}{d t} \int_{\Omega} w^{k_1} dx 
+ \frac{4({k_1}-1)}{{k_1}^2} \int_{\Omega} |\nabla w^{\frac{{k_1}}{2}}|^2dx  
& \leq \frac{{k_1}-1}{{k_1}}  \int_{\Omega} w^{k_1} (1+u) dx.
\end{align}
By the Gagliardo-Nirenberg inequality, we can find a constant $c_1=c_1(k_1)>0$ such that
\begin{align}\label{G-N}
\int_{\Omega} w^{k_1+\frac{2}{n}}  d x 
\leq & c_1 \cdot\left(\int_{\Omega} w d x\right)^{(k_1+\frac{2}{n})(1-\theta_1)} \cdot 
\int_{\Omega}\left|\nabla w^{\frac{k_1}{2}}\right|^2  d x
+c_1 \cdot\left(\int_{\Omega} w  d x\right)^{k_1+\frac{2}{n}} 
\end{align}
 where $\theta_1=\frac{\frac{k_1}{2}-\frac{k_1}{2(k_1+\frac{2}{n})}}{\frac{1}{n}+\frac{k_1}{2}-\frac{1}{2}}
\in (0,1)$ due to $k_1>1$. Using Young's inequality, (\ref{G-N}) and the mass conservation property of $w(x,t)$, we obtain the positive constants $c_2=c_2(k_1,k_2)$, $c_3=c_3(k_1,k_2)$ and $c_4=c_4(k_1,k_2)$ such that
\begin{align}\label{uk1+q-1w}
&\frac{K_S({k_2}-1)}{{k_2}+q-1} \int_{\Omega}(1+u)^{{k_2}+q-1} wdx \nonumber \\
\leq& c_2 \int_{\Omega} w^{k_1+\frac{2}{n}}  d x 
+c_3 \int_{\Omega} (1+u)^{\frac{(k_1+\frac{2}{n})(k_2+q-1)}{k_1+\frac{2}{n}-1}}  d x \nonumber \\
\leq& \frac{{k_1}-1}{{k_1}^2} \int_{\Omega} |\nabla w^{\frac{{k_1}}{2}}|^2dx 
+ c_3 \int_{\Omega} (1+u)^{\frac{(k_1+\frac{2}{n})(k_2+q-1)}{k_1+\frac{2}{n}-1}}  d x
+c_4.
\end{align}
Similar to (\ref{uk1+q-1w}), we derive that
\begin{align*}
\frac{{k_1}-1}{{k_1}}  \int_{\Omega} w^{k_1} (1+u) dx
\leq& \frac{{k_1}-1}{{k_1}^2} \int_{\Omega} |\nabla w^{\frac{{k_1}}{2}}|^2dx 
+ c_3 \int_{\Omega} (1+u)^{\frac{k_1+\frac{2}{n}}{\frac{2}{n}}}  d x
+c_4.
\end{align*}
Combining this with (\ref{uk1}), (\ref{wk2}) and (\ref{uk1+q-1w}), we infer that
\begin{align}\label{uk1wk2-1}
 &\frac{d}{d t} \left(\frac{1}{{k_2}}\int_{\Omega} (1+u)^{k_2} dx 
 + \frac{1}{{k_1}} \int_{\Omega} w^{k_1} dx \right)
+\frac{4K_D({k_2}-1)}{({k_2}+p)^2} \int_{\Omega} |\nabla (1+u)^\frac{{k_2}+p}{2}|^2dx \nonumber \\
&+\frac{2({k_1}-1)}{{k_1}^2} \int_{\Omega} |\nabla w^{\frac{{k_1}}{2}}|^2dx \nonumber \\
\leq& c_3 \int_{\Omega} (1+u)^{\frac{(k_1+\frac{2}{n})(k_2+q-1)}{k_1+\frac{2}{n}-1}}  d x
+c_3 \int_{\Omega} (1+u)^{\frac{n}{2}(k_1+\frac{2}{n})}  d x
+2c_4.
\end{align}
Again applying the Gagliardo-Nirenberg inequality, there exists a positive constant $c_5=c_5(k_2)$ such that
\begin{align}\label{G-N-3}
\int_{\Omega} (1+u)^{\frac{2}{n}(p-q+2)+k_2+p}d x
\leq& c_5\Big(\int_{\Omega} (1+u)^{p-q+2} d x\Big)^{\frac{\frac{2}{n}(p-q+2)+k_2+p}{p-q+2}(1-\theta_2)} 
\int_{\Omega}\left|\nabla (1+u)^{\frac{k_2+p}{2}}\right|^2  d x \nonumber \\
&+c_5\Big(\int_{\Omega}(1+u)^{p-q+2} d x\Big)^{\frac{2}{n}+\frac{k_2+p}{p-q+2}}
\end{align}
with $\theta_2=\frac{\frac{k_2+p}{2(p-q+2)}-\frac{k_2+p}{2(\frac{2}{n}(p-q+2)+k_2+p)}}{\frac{1}{n}+\frac{k_2+p}{2(p-q+2)}-\frac{1}{2}}\in (0,1)$ by (\ref{k1}). Using (\ref{eq-1}) and Young's inequality, along with (\ref{up-q+2}) and (\ref{G-N-3}), one can find positive constants $c_6=c_6(k_1,k_2)$, $c_7=c_7(k_1,k_2)$ and $c_8=c_8(k_1,k_2)$ such that
\begin{align}\label{u-complex}
c_3 \int_{\Omega} (1+u)^{\frac{(k_1+\frac{2}{n})(k_2+q-1)}{k_1+\frac{2}{n}-1}}  d x
\leq & c_6 \int_{\Omega} (1+u)^{\frac{2}{n}(p-q+2)+k_2+p}d x +c_7 \nonumber \\
\leq & \frac{K_D({k_2}-1)}{({k_2}+p)^2} \int_{\Omega} |\nabla (1+u)^\frac{{k_2}+p}{2}|^2dx  +c_8.
\end{align}
Similarly, according to the first inequality in (\ref{pq}), we deduce that
\begin{align}\label{u-easy}
c_3 \int_{\Omega} (1+u)^{\frac{n}{2}(k_1+\frac{2}{n})}  d x
\leq  \frac{K_D({k_2}-1)}{({k_2}+p)^2} \int_{\Omega} |\nabla (1+u)^\frac{{k_2}+p}{2}|^2dx  +c_8.
\end{align}
Inserting (\ref{u-complex}) and (\ref{u-easy}) into (\ref{uk1wk2-1}), we have
\begin{align*}
 &\frac{d}{d t} \left(\frac{1}{{k_2}}\int_{\Omega} (1+u)^{k_2} dx 
 + \frac{1}{{k_1}} \int_{\Omega} w^{k_1} dx \right)
+\frac{2K_D({k_2}-1)}{({k_2}+p)^2} \int_{\Omega} |\nabla (1+u)^\frac{{k_2}+p}{2}|^2dx \nonumber \\
+&\frac{2({k_1}-1)}{{k_1}^2} \int_{\Omega} |\nabla w^{\frac{{k_1}}{2}}|^2dx
\leq 2c_8
+2c_4.
\end{align*}
Applying the Gagliardo-Nirenberg inequality, and using (\ref{up-q+2}), we infer the existence of positive constants $c_9=c_9(k_2)$ and $c_{10}=c_{10}(k_2)$ such that
\begin{align}\label{u-G-N}
\|(1+u)^{\frac{k_2+p}{2}}\|_{L^{\frac{2k_2}{k_2+p}}}^{r_1}
\leq& c_9\|\nabla(1+u)^{\frac{k_2+p}{2}}\|_{L^2}^{2}
\|(1+u)^{\frac{k_2+p}{2}}\|_{L^{\frac{2(p-q+2)}{k_2+p}}}^{r_1(1-\theta_3)}
+c_9 \|(1+u)^{\frac{k_2+p}{2}}\|_{L^{\frac{2(p-q+2)}{k_2+p}}}^{r_1}
\nonumber \\
\leq & c_{10} \|\nabla(1+u)^{\frac{k_2+p}{2}}\|_{L^2}^{2} +c_{10}.
\end{align}
where $\theta_3=\frac{\frac{k_2+p}{2(p-q+2)}-\frac{k_2+p}{2k_2}}{\frac{1}{n}+\frac{k_2+p}{2(p-q+2)}-\frac{1}{2}} \in (0,1)$ and $r_1=\frac{2}{\theta_3}=\frac{\frac{2}{n}+\frac{k_2+p}{p-q+2}-1}{\frac{k_2    +p}{2(p-q+2)}-\frac{k_2+p}{2k_2}}>0$ by (\ref{k1-2}). 
We use the Gagliardo-Nirenberg inequality again to show the existence of constant $c_{11}=c_{11}(k_1)$ such that
\begin{align}\label{w-G-N}
\|w^{\frac{k_1}{2}}\|_{L^{2}}^{r_2}
\leq c_{11}\|\nabla w^{\frac{k_1}{2}}\|_{L^2}^{2}
\|w^{\frac{k_1}{2}}\|_{L^{\frac{2}{k_1}}}^{r_2(1-\theta_4)}
+c_{11} \|w^{\frac{k_1}{2}}\|_{L^{\frac{2}{k_1}}}^{r_2},
\end{align}
where $\theta_4=\frac{\frac{k_1}{2}-\frac{1}{2}}{\frac{1}{n}+\frac{k_1}{2}-\frac{1}{2}} \in (0,1)$ and $r_2=\frac{2}{\theta_4}=\frac{\frac{2}{n}+k_1-1}{\frac{k_1    }{2}-\frac{1}{2}}>0$ by $k_1>1$. Let 
\begin{align*}
y(t)= \frac{1}{{k_2}}\int_{\Omega} (1+u)^{k_2} dx 
 + \frac{1}{{k_1}} \int_{\Omega} w^{k_1} dx
\end{align*}
and $r=\min\{\frac{r_1(k_2+p)}{2k_2}, \frac{r_2}{2}\}>0$. Thus, according to Young's inequality, along with (\ref{u-G-N}) and (\ref{w-G-N}), we can find positive constants $c_{12}=c_{12}(k_1,k_2)$, $c_{13}=c_{13}(k_1,k_2)$, $c_{14}=c_{14}(k_1,k_2)$, $c_{15}=c_{15}(k_1,k_2)$ and $c_{16}=c_{16}(k_1,k_2)$ such that
\begin{align*}
c_{12}y^{r}(t) 
\leq& c_{12}\Big(\frac{2}{{k_2}}\Big)^{r} \left(\int_{\Omega} (1+u)^{k_2} dx \right)^r
 + c_{12}\Big(\frac{2}{{k_1}}\Big)^{r} \Big(\int_{\Omega} w^{k_1} dx \Big)^r \nonumber \\
\leq& c_{13}\Big(\int_{\Omega} (1+u)^{k_2} dx \Big)^{\frac{r_1(k_2+p)}{2k_2}}
 + c_{14} \Big(\int_{\Omega} w^{k_1} dx \Big)^{\frac{r_2}{2}}+c_{15} \nonumber \\
 \leq &  \frac{2K_D({k_2}-1)}{({k_2}+p)^2} \int_{\Omega} |\nabla (1+u)^\frac{{k_2}+p}{2}|^2dx
+\frac{2({k_1}-1)}{{k_1}^2} \int_{\Omega} |\nabla w^{\frac{{k_1}}{2}}|^2dx +c_{16}.
\end{align*}
Thus, we obtain
\begin{align*}
y^{\prime}(t)+c_{12}y^{r}(t) \leq c_{17}, \quad t \in (0,T_{\max})
\end{align*}
with $c_{17}=c_{17}(k_1,k_2)=2c_{8}+2c_{4}+c_{16}$ which implies that $y(t) \leq \min\{y(0), (\frac{c_{17}}{c_{12}})^{-r}\}$. Since $\frac{n}{2}\big(k_1+\frac{2}{n}\big)-\frac{2}{n}(p-q+2)-p\rightarrow \infty$ and $\big(\frac{2}{n}(p-q+2)+p-q+1\big)\big(k_1+\frac{2}{n}\big)-\frac{2}{n}(p-q+2)-p\rightarrow \infty$ as $k_1\rightarrow \infty$ in (\ref{k2}), we can prove (\ref{uw-bounded}) for all $k_1,k_2>1$ by Hölder's inequality.
\end{proof}
A standard argument relying on a Moser-type recursion finally yields $L^{\infty}$ bounds:

\emph{\textbf{Proof of Theorem \ref{thm1_2}.}} We further use a Moser-type iteration (cf. \cite[Lemma A.1]{2012-JDE-TaoWinkler}) to obtain the $\|u\|_{L^{\infty}} \leq c_1$ and $\|w\|_{L^{\infty}} \leq c_1$ with $c_1>0$ independent of $t$. Thus, Theorem~\ref{thm1_2} follows from Proposition~\ref{local} .
\section{Global existence. Proof of Theorem \ref{thm1_3}}\label{global existence}
In this section, we aim to prove Theorem \ref{thm1_3}. 
\begin{lem}\label{global}
Let $n \geq 3$. Assume that $D(s)$ and $S(s)$ satisfy $(\ref{eq1.2})$ as well as $(\ref{D-g})$ and $(\ref{S-g})$ with some $K_D, K_S>0$ and $p,q\in \mathbb{R}$ fulfilling 
\begin{align*}
q< 1-\frac{n}{2}.
\end{align*}
Then, for any $T \in (0,T_{\max})$ and $k_1, k_2\in(1,\infty)$, there exists a positive constant $C(k_1, k_2, T)$ such that
\begin{align}\label{uw}
  \|u(\cdot, t)\|_{L^{k_1}\left(\Omega \right)}+\|w(\cdot, t)\|_{L^{k_2}\left(\Omega \right)} \leq C(k_1, k_2, T) , 
  \quad  t\in (0,T).
\end{align}
\end{lem}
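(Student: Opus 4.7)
The plan is to derive coupled differential inequalities for
$y_1(t) := \int_\Omega (1+u)^{k_2}\,dx$ and $y_2(t) := \int_\Omega w^{k_1}\,dx$
on $[0,T] \subset [0,T_{\max})$, and then close them by a Gronwall-type argument whose constants are permitted to depend on $T$. Testing the first equation in (\ref{eq1.1.0}) against $(1+u)^{k_2-1}$ and the third against $w^{k_1-1}$, integrating by parts, using (\ref{D-g}), and substituting $\Delta v = \mu_w - w$ and $\Delta z = \mu_u - u$ from the second and fourth equations to handle the chemotactic terms, I would arrive at
\begin{align*}
\tfrac{1}{k_2}y_1'(t) + K_D(k_2-1)\int_\Omega (1+u)^{k_2+p-2}|\nabla u|^2\,dx
&\leq (k_2-1)\int_\Omega F(u)(w-\mu_w)\,dx, \\
\tfrac{1}{k_1}y_2'(t) + (k_1-1)\int_\Omega w^{k_1-2}|\nabla w|^2\,dx
&\leq \tfrac{k_1-1}{k_1}\int_\Omega w^{k_1}(u-\mu_u)\,dx,
\end{align*}
where $F(u):=\int_0^u S(\sigma)(1+\sigma)^{k_2-2}\,d\sigma$, exactly as in the proof of Theorem~\ref{thm1_2}.

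The hypothesis $q<1-\tfrac{n}{2}$ enters decisively through (\ref{S-g}). Since $n \geq 3$ forces $q<0$, one has $S(u)\leq K_S$ uniformly and $F(u)\leq C(1+u)^{k_2+q-1}$, with the exponent $k_2+q-1$ strictly below $k_2-\tfrac{n}{2}$. This is precisely the subcriticality threshold for the Gagliardo--Nirenberg inequality applied against the diffusion-induced gradient $\int|\nabla(1+u)^{(k_2+p)/2}|^2\,dx$. Splitting $\int(1+u)^{k_2+q-1}w\,dx$ via Young's inequality into an $L^{k_2}$-power of $1+u$ and an $L^{k_1}$-power of $w$, and combining with mass conservation $\int_\Omega u = \int_\Omega u_0$ and $\int_\Omega w = \int_\Omega w_0$, each resulting term is either absorbed into $\int|\nabla(1+u)^{(k_2+p)/2}|^2$ (via Gagliardo--Nirenberg) or contributes a linear multiple of $y_1$, $y_2$. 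An analogous Young splitting of $\int w^{k_1}u\,dx$, which we may arrange by choosing $k_2$ large enough that the $u$-power produced lies in $L^{k_2}$, absorbs the gradient part into $\int|\nabla w^{k_1/2}|^2$ and leaves at most a linear contribution in $y_1$. Summing the two inequalities yields
\[
y_1'(t) + y_2'(t) \leq C_1\bigl(y_1(t) + y_2(t)\bigr) + C_2, \qquad t \in (0, T_{\max}),
\]
and Gronwall on $[0,T]$ delivers (\ref{uw}).

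The main obstacle is to engineer the Young/Gagliardo--Nirenberg splittings so that the coupled ODE is at worst linear in $(y_1, y_2)$: the factor $u$ in the $w$-equation's chemotactic term $\int w^{k_1}u\,dx$ has an exponent insensitive to $q$, so $k_1$ and $k_2$ must be paired carefully for this term to be subordinate to the diffusion plus $y_1$. If a single-step choice does not yield the linear structure for a given pair $(k_1,k_2)$, I would bootstrap from the $L^1$ bound afforded by mass conservation: Neumann elliptic estimates propagate $L^r$-control on $\nabla v, \nabla z$ to successively higher exponents, and the strict inequality $q<1-\tfrac{n}{2}$ ensures each iteration strictly gains integrability until the prescribed $(k_1,k_2)$ are reached. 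The passage from such $L^{k_1}, L^{k_2}$ bounds to the $L^\infty$ bound needed to conclude $T_{\max}=\infty$ would proceed via a Moser-type iteration as in Theorem~\ref{thm1_2}, but lies beyond the statement of Lemma~\ref{global} itself.
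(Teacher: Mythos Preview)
Your overall framework---test the two parabolic equations, obtain coupled differential inequalities for $y_1=\int(1+u)^{k_2}$ and $y_2=\int w^{k_1}$, split the cross terms by Young, absorb via Gagliardo--Nirenberg, and close by Gronwall---matches the paper's proof. However, you misidentify where the hypothesis $q<1-\tfrac{n}{2}$ enters, and this is not cosmetic.

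You claim that $k_2+q-1<k_2-\tfrac{n}{2}$ is ``precisely the subcriticality threshold for the Gagliardo--Nirenberg inequality applied against $\int|\nabla(1+u)^{(k_2+p)/2}|^2$.'' It is not: with only the $L^1$ mass bound on $u$, the critical exponent for absorption by that gradient term is $k_2+p+\tfrac{2}{n}$, which depends on $p$. In the regime of interest for this lemma (namely $q<1-\tfrac{n}{2}$ with $q-p\geq 2-\tfrac{n}{2}$, where Theorem~\ref{thm1_2} does not already apply), one has $p\leq q-2+\tfrac{n}{2}<-1$, so $p$ can be arbitrarily negative and the $u$-gradient is too degenerate to absorb anything useful. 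The paper simply \emph{discards} the term $\int|\nabla(1+u)^{(k_2+p)/2}|^2$ altogether.

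The correct mechanism, as carried out in the paper, is to route \emph{both} cross terms into the $w$-gradient. After Young, the term $\int(1+u)^{k_2+q-1}w$ yields $\int(1+u)^{k_2}$ (linear in $y_1$) plus $\int w^{k_2/(1-q)}$, and $\int w^{k_1}(1+u)$ yields $\int(1+u)^{k_2}$ plus $\int w^{k_1k_2/(k_2-1)}$. Both $w$-integrals are absorbed by $\int|\nabla w^{k_1/2}|^2$ via Gagliardo--Nirenberg (with the $L^1$ bound on $w$) provided $\tfrac{k_2}{1-q}<k_1+\tfrac{2}{n}$ and $\tfrac{k_1k_2}{k_2-1}<k_1+\tfrac{2}{n}$; these two constraints are simultaneously satisfiable for some $k_2$ precisely when $q<1-\tfrac{n}{2}$. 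This is where the hypothesis actually bites. Your fallback bootstrap would eventually reach the same conclusion, but the direct one-shot argument requires using the $w$-gradient, not the $u$-gradient.
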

\begin{proof}
Since $q< 1-\frac{n}{2}$, for any $k_1>1$, we infer that
\begin{align*}
\frac{n}{2}k_1+1< (k_1+\frac{2}{n})(1-q),
\end{align*}
thus guaranteeing that we can choose $k_2>1$ such that 
\begin{align}\label{k1k2}
\frac{n}{2}k_1+1 < k_2 < (k_1+\frac{2}{n})(1-q).
\end{align}
Therefore, we have 
\begin{align}\label{q}
\frac{k_1k_2}{k_2-1}< k_1+\frac{2}{n}, \quad 
\frac{k_2}{1-q}< k_1+\frac{2}{n}.
\end{align}

From (\ref{uk1}) and (\ref{wk2}), we have
\begin{align}\label{ineq}
& \frac{d}{d t} \left(\frac{1}{{k_2}}\int_{\Omega} (1+u)^{k_2} dx
+\frac{1}{{k_1}} \int_{\Omega} w^{k_1} dx \right)
+\frac{4K_D({k_2}-1)}{({k_2}+p)^2} \int_{\Omega} |\nabla (1+u)^\frac{{k_2}+p}{2}|^2dx \nonumber \\
&  + \frac{4({k_1}-1)}{{k_1}^2} \int_{\Omega} |\nabla w^{\frac{{k_1}}{2}}|^2dx \nonumber \\  
 \leq & \frac{K_S({k_2}-1)}{{k_2}+q-1} \int_{\Omega}(1+u)^{{k_2}+q-1} wdx 
+\frac{{k_1}-1}{{k_1}}  \int_{\Omega} w^{k_1} (1+u) dx.
\end{align}
The Gagliardo-Nirenberg inequality implies the existence of  positive constants $c_1=c_1(k_1)$, $c_2=c_2(k_1)$ and $c_3=c_3(k_1)$ such that
\begin{align}\label{G-N-4}
c_1\int_{\Omega} w^{k_1+\frac{2}{n}} dx
\leq& c_2 \left(\int_{\Omega} w dx\right)^{(k_1+\frac{2}{n})(1-\theta_5)}
\int_{\Omega} |\nabla w^{\frac{{k_1}}{2}}|^2dx
+c_2 \left(\int_{\Omega} w dx\right)^{k_1+\frac{2}{n}} \nonumber \\
\leq& \frac{2({k_1}-1)}{{k_1}^2}\int_{\Omega} |\nabla w^{\frac{{k_1}}{2}}|^2dx +c_3
\end{align}
with $\theta_5=\frac{\frac{k_1}{2}-\frac{k_1}{2(k_1+\frac{2}{n})}}{\frac{k_1}{2}+\frac{1}{n}-\frac{1}{2}} \in (0,1)$ by $k_1>1$. By Young's inequality and (\ref{G-N-4}), along with the second inequality in (\ref{q}), we know that there exist positive constants $c_4=c_4(k_2)$, $c_5=c_5(k_2)$, $c_6=c_6(k_1,k_2)$, $c_7=c_7(k_1,k_2)$ such that
\begin{align}\label{uk1+q-1w-1}
\frac{K_S({k_2}-1)}{{k_2}+q-1} \int_{\Omega}(1+u)^{{k_2}+q-1} wdx 
\leq& c_4 \int_{\Omega} (1+u)^{k_2} dx 
+ c_5 \int_{\Omega} w^{\frac{k_2}{1-q}} dx \nonumber \\
\leq& c_4 \int_{\Omega} (1+u)^{k_2} dx 
+ c_1 \int_{\Omega} w^{k_1+\frac{2}{n}} dx +c_6 \nonumber \\
\leq& c_4 \int_{\Omega} (1+u)^{k_2} dx
+\frac{2({k_1}-1)}{{k_1}^2}\int_{\Omega} |\nabla w^{\frac{{k_1}}{2}}|^2dx +c_7.
\end{align}
Likewise, using the first inequality in (\ref{q}), we can find positive constants $c_8=c_8(k_1,k_2)$, $c_9=c_9(k_1,k_2)$, $c_{10}=c_{10}(k_1,k_2)$ and $c_{11}=c_{11}(k_1,k_2)$ such that
\begin{align}\label{wk2u-1}
\frac{{k_1}-1}{{k_1}}  \int_{\Omega} w^{k_1} (1+u) dx
\leq& c_8 \int_{\Omega} (1+u)^{k_2} dx 
+ c_9 \int_{\Omega} w^{\frac{k_1k_2}{k_2-1}}dx \nonumber \\
\leq& c_8 \int_{\Omega} (1+u)^{k_2} dx 
+ c_1 \int_{\Omega} w^{k_1+\frac{2}{n}}dx +c_{10} \nonumber \\
\leq & c_8 \int_{\Omega} (1+u)^{k_2} dx 
+\frac{2({k_1}-1)}{{k_1}^2}\int_{\Omega} |\nabla w^{\frac{{k_1}}{2}}|^2dx +c_{11}.
\end{align}
Substituting (\ref{uk1+q-1w-1}) and (\ref{wk2u-1}) into (\ref{ineq}) yields 
\begin{align*}
& \frac{d}{d t} \left(\frac{1}{{k_2}}\int_{\Omega} (1+u)^{k_2} dx
+\frac{1}{{k_1}} \int_{\Omega} w^{k_1} dx+\frac{c_7+c_{11}}{k_2(c_4+c_8)} \right) \nonumber \\
\leq &  (c_4+c_8)\int_{\Omega} (1+u)^{k_2} dx+c_7+c_{11} \nonumber \\ 
\leq &(c_4+c_8)k_2\left(\frac{1}{{k_2}}\int_{\Omega} (1+u)^{k_2} dx
+\frac{1}{{k_1}} \int_{\Omega} w^{k_1} dx+\frac{c_7+c_{11}}{k_2(c_4+c_8)}\right), 
\end{align*}
for all $ t\in (0,T_{\max})$. Let
\begin{align*}
y(t)=\frac{1}{{k_2}}\int_{\Omega} (1+u)^{k_2} dx
+\frac{1}{{k_1}} \int_{\Omega} w^{k_1} dx+\frac{c_7+c_{11}}{k_2(c_4+c_8)},
\quad t\in (0,T_{\max}).
\end{align*}
Then, we have
\begin{align*}
y^{\prime}(t) \leq c_{12} y(t), \quad t\in (0,T_{\max})
\end{align*}
with $c_{12}=c_{12}(k_1,k_2)=(c_4+c_8)k_2$. By Grönwall’s lemma, for any $T \in (0,T_{\max})$, we obtain
\begin{align*}
y(t) \leq \mathrm{e}^{c_{12}t}y(0) \leq \mathrm{e}^{c_{12}T}y(0) , \quad t\in (0,T).
\end{align*}
Since $\frac{n}{2}k_1+1\rightarrow \infty $ and $(k_1+\frac{2}{n})(1-q)\rightarrow \infty$ as $k_1\rightarrow \infty$ in (\ref{k1k2}), we can apply Hölder's inequality to obatin (\ref{uw}).
\end{proof}

\emph{\textbf{Proof of Theorem \ref{thm1_3}.}} In view of Lemma A.1 in \cite{2012-JDE-TaoWinkler} and Lemma~\ref{global}, we obtain that if $T_{\max}<\infty$, then one can find $c_1>0$ such that $\|u\|_{L^{\infty}}+\|w\|_{L^{\infty}} \leq c_1$ for all $t \in (0,T_{\max})$. Since this would contradict the extensibility criterion (\ref{extensibility}) in Lemma~\ref{local}, we thus arrive at the conclusion that the system (\ref{eq1.1.0}) has a global classical solution.




\begin{thebibliography}{10}


\bibitem{2025-CVPDE-CaoFuest}
{\sc X.~Cao and M.~Fuest}, {\em Finite-time blow-up in fully parabolic
  quasilinear {K}eller-{S}egel systems with supercritical exponents}, Calc.
  Var. Partial Differential Equations, 64 (2025), Paper No. 89, pp. 25.
  
\bibitem{2012-JDE-CieslakStinner}
{\sc T.~Cie\'slak and C.~Stinner}, {\em Finite-time blowup and global-in-time
  unbounded solutions to a parabolic-parabolic quasilinear {K}eller-{S}egel
  system in higher dimensions}, J. Differential Equations, 252 (2012),
  pp.~5832--5851.

\bibitem{2014-AAM-CieslakStinner}
{\sc T.~Cie\'{s}lak and C.~Stinner}, {\em Finite-time blowup in a supercritical
  quasilinear parabolic-parabolic {K}eller-{S}egel system in dimension 2}, Acta
  Appl. Math., 129 (2014), pp.~135--146.

\bibitem{2015-JDE-CieslakStinner}
\leavevmode\vrule height 2pt depth -1.6pt width 23pt, {\em New critical
  exponents in a fully parabolic quasilinear {K}eller-{S}egel system and
  applications to volume filling models}, J. Differential Equations, 258
  (2015), pp.~2080--2113.


\bibitem{2001-EJAM-HorstmannWang}
{\sc D.~Horstmann and G.~Wang}, {\em Blow-up in a chemotaxis model without
  symmetry assumptions}, European J. Appl. Math., 12 (2001), pp.~159--177.

  
\bibitem{1992-TAMS-JaegerLuckhaus}
{\sc W.~J\"ager and S.~Luckhaus}, {\em On explosions of solutions to a system
  of partial differential equations modelling chemotaxis}, Trans. Amer. Math.
  Soc., 329 (1992), pp.~819--824.

\bibitem{1971-Jotb-KellerSegel}
{\sc E.~F. Keller and L.~A. Segel}, {\em Model for chemotaxis}, J. Theor. Biol,
  30 (1971), pp.~225--234.
  
  \bibitem{1971-JTB-KellerSegel}
\leavevmode\vrule height 2pt depth -1.6pt width 23pt, {\em Traveling bands of chemotactic
  bacteria: a theoretical analysis}, J. Theor. Biol, 30 (1971), pp.~235--248.
  
\bibitem{2020-DCDSSS-Lankeit}
{\sc J.~Lankeit}, {\em Infinite time blow-up of many solutions to a general
  quasilinear parabolic-elliptic {K}eller-{S}egel system}, Discrete Contin.
  Dyn. Syst. Ser. S, 13 (2020), pp.~233--255.
  


\bibitem{2014--MizoguchiWinkler}
{\sc N.~Mizoguchi and M.~Winkler}, {\em Blow-up in the two-dimensional
  parabolic keller-segel system},  (2014).

\bibitem{1995-AMSA-Nagai}
{\sc T.~Nagai}, {\em Blow-up of radially symmetric solutions to a chemotaxis
  system}, Adv. Math. Sci. Appl., 5 (1995), pp.~581--601.

\bibitem{2001-JIA-Nagai}
\leavevmode\vrule height 2pt depth -1.6pt width 23pt, {\em Blowup of nonradial
  solutions to parabolic-elliptic systems modeling chemotaxis in
  two-dimensional domains}, J. Inequal. Appl., 6 (2001), pp.~37--55.

\bibitem{1997-FE-NagaiSenbaYoshida}
{\sc T.~Nagai, T.~Senba, and K.~Yoshida}, {\em Application of the
  {T}rudinger-{M}oser inequality to a parabolic system of chemotaxis},
  Funkcial. Ekvac., 40 (1997), pp.~411--433.

\bibitem{2002-CAMQ-PainterHillen}
{\sc K.~J. Painter and T.~Hillen}, {\em Volume-filling and quorum-sensing in
  models for chemosensitive movement}, Can. Appl. Math. Q., 10 (2002),
  pp.~501--543.

\bibitem{2012-JDE-TaoWinkler}
{\sc Y.~Tao and M.~Winkler}, {\em Boundedness in a quasilinear
  parabolic-parabolic {K}eller-{S}egel system with subcritical sensitivity}, J.
  Differential Equations, 252 (2012), pp.~692--715.



\bibitem{2015-DCDSSB-TaoWinkler}
\leavevmode\vrule height 2pt depth -1.6pt width 23pt, {\em Boundedness vs.
  blow-up in a two-species chemotaxis system with two chemicals}, Discrete
  Contin. Dyn. Syst. Ser. B, 20 (2015), pp.~3165--3183.

\bibitem{2025-JDE-TaoWinkler}
\leavevmode\vrule height 2pt depth -1.6pt width 23pt, {\em A switch in
  dimension dependence of critical blow-up exponents in a {K}eller-{S}egel
  system involving indirect signal production}, J. Differential Equations, 423
  (2025), pp.~197--239.

\bibitem{2010-MMAS-Winkler}
{\sc M.~Winkler}, {\em Does a `volume-filling effect' always prevent
  chemotactic collapse?}, Math. Methods Appl. Sci., 33 (2010), pp.~12--24.

\bibitem{2019-JDE-Winkler}
\leavevmode\vrule height 2pt depth -1.6pt width 23pt, {\em Global classical
  solvability and generic infinite-time blow-up in quasilinear {K}eller-{S}egel
  systems with bounded sensitivities}, J. Differential Equations, 266 (2019),
  pp.~8034--8066.

\bibitem{2010-NA-WinklerDjie}
{\sc M.~Winkler and K.~C. Djie}, {\em Boundedness and finite-time collapse in a
  chemotaxis system with volume-filling effect}, Nonlinear Anal., 72 (2010),
  pp.~1044--1064.

\bibitem{2018-N-YuWangZheng}
{\sc H.~Yu, W.~Wang, and S.~Zheng}, {\em Criteria on global boundedness versus
  finite time blow-up to a two-species chemotaxis system with two chemicals},
  Nonlinearity, 31 (2018), pp.~502--514.

\bibitem{2024-NARWA-YuXueHuZhao}
{\sc H.~Yu, B.~Xue, Y.~Hu, and L.~Zhao}, {\em The critical mass curve and
  chemotactic collapse of a two-species chemotaxis system with two chemicals},
  Nonlinear Anal. Real World Appl., 78 (2024), ~Paper No. 104079, pp. 20.
  
\bibitem{2025--ZengLi}
{\sc Z.~Zeng and Y.~Li}, {\em Critical blow-up curve in a quasilinear two-species chemotaxis system with two chemicals  }.
Preprint, 2025.


\bibitem{2017-TMNA-Zheng}
{\sc J.~Zheng}, {\em Boundedness in a two-species quasi-linear chemotaxis
  system with two chemicals}, Topol. Methods Nonlinear Anal., 49 (2017),
  pp.~463--480.

\bibitem{2021-JMAA-Zhong}
{\sc H.~Zhong}, {\em Boundedness in a quasilinear two-species chemotaxis system
  with two chemicals in higher dimensions}, J. Math. Anal. Appl., 500 (2021),
  ~Paper No. 125130, pp. 22.

\end{thebibliography}
\end{document}